\def\sc{\scriptstyle}
\def\cl{\centerline}
\def\al{\alpha}
\def\b{\beta}
\def\vs{\vspace*}
\def\W{\mathcal{W}}
\def\R{\mathcal{R}}
\def\H{\mathcal{H}}
\def\L{\mathcal{L}}
\def\Z{\mathbb{Z}}
\def\N{\mathbb{N}}
\def\C{\mathbb{C}}
\def\C{\mathbb{C}}
\numberwithin{equation}{section}
\newtheorem{theo}{Theorem}[section]
\newtheorem{defi}[theo]{Definition}
\newtheorem{lemm}[theo]{Lemma}
\newtheorem{prop}[theo]{Proposition}
\newtheorem{remark}[theo]{Remark}
\begin{document}
\sloppy{}
\begin{CJK*}{GBK}{song}
\baselineskip 6pt
\lineskip 6pt

\begin{center}{\Large\bf Simple  non-weight modules over \\ Lie superalgebras of Block type}
\footnote {Supported by NSF Grant No.~11971350 of China.}%\\ \indent \ \ Corresponding author: Xiaoqing Yue (xiaoqingyue@tongji.edu.cn)}
\end{center}
\vs{6pt}

\cl{Yucai Su,\ \ Xiaoqing Yue,  \ \ Xiaoyu Zhu}
\cl{\footnotesize School of Mathematical Sciences, Tongji University, Shanghai 200092, China}
\cl{\footnotesize E-mails: ycsu@tongji.edu.cn, xiaoqingyue@tongji.edu.cn, 1810079@tongji.edu.cn}
%\quad\\
\vs{12pt} \par

\noindent{{\bf Abstract.} In this paper, a family of non-weight modules over Lie superalgebras $S(q)$ of Block type are studied. %, where $q$ is a nonzero complex number.
%More precisely,
 Free $U(\eta)$-modules of rank $1$ over  Ramond-Block algebras and free $U(\mathfrak{h})$-modules of rank $2$ over  Neveu-Schwarz-Block algebras
 are constructed and classified. Moreover, the sufficient and necessary conditions for such modules to be simple are presented, and their isomorphism classes are also determined. The results cover some existing results.}
\vs{6pt}

\noindent{\bf Keywords:} Lie superalgebra of Block type, Ramond-Block algebra, Neveu-Schwarz-Block algebra, Simple module, Non-weight module.

\noindent{\it Mathematics Subject Classification (2000):} 17B10, 17B35, 17B65, 17B68.

\section{Introduction}
%In 1958, a class of infinite dimensional simple Lie algebra as analogues of the Zassenhaus algebra was first introduced by Block in \cite{B}. Nowadays, these Lie algebra and their generalizations are usually referred to as Lie algebra of Block type.
 Lie algebras of Block type (cf. \cite{B}) were firstly introduced by Block in 1958. Since these Lie algebras have close relations with some well-known Lie algebras,
such as the Virasoro algebra (cf. \cite{KPS}), $W$-infinity algebras (cf. \cite{B1}), ect., the structure theory of  Lie (super)algebras of this type have been
intensively studied by many authors (cf. \cite{DZ,OZ,X1,Z}). %However, their representation theory is far from being well developed.
In addition, Lie algebras of Block type are also special cases of Lie algebras of (generalized) Cartan type (cf. \cite{X}).
It is well known that although Lie algebras of Cartan type have a long history, their representation theory is however far from being well developed.
%(cf. \cite{X}). %It is well known that there is a long history of Cartan type Lie algebras, but their representation theory does not develop very well.
It is very helpful to first study the representation theory of special cases %of Cartan type Lie (super)algebras
in order to better understand the representation theory of Lie algebras of Cartan type. Besides, we also notice that Lie algebras of Block type have some applications in the
integrable system (cf.~\cite{18,27}). Partially due to this, the study of representations
 of Lie (super)algebras of this kind becomes significant and has been attracting a lot of attentions. % Also, it is interesting to see that Lie (super)algebras of Block type also appear in integrable systems

%Classification of simple modules is a natural question which arises when considering the representations of Lie (super)algebras.
 Non-weight modules constitute one of the important ingredients of the representation theory of Lie (super)algebras, % of Block type,
 a classification of non-weight modules over them under some conditions  is definitely necessary and is interesting as well.
 Recently, some authors constructed and studied some important classes of non-weight modules on which the Cartan subalgebra acts freely. These modules are called free $U(\mathfrak{h})$-modules, where $U(\mathfrak{h})$ is the universal enveloping algebra of the Cartan subalgebra $\mathfrak{h}$.
In \cite{N}, Nilsson first introduced them for the simple Lie algebra $sl_{n+1}$. Then in a subsequent paper \cite{N1}, Nilsson presented that free $U(\mathfrak{h})$-modules of a finite dimensional simple Lie algebra can exist only when it is of type $A$ or $C$. At the same time, these modules over $sl_{n+1}$ were showed by using a different way in \cite{TZ1}. The authors in \cite{TZ} proved that any free $U(\mathfrak{h})$-module of rank 1 over Witt algebra is isomorphic to $\Omega(\lambda,a)$ (cf. \cite{ LZ}) for some $\lambda\in \C^{*}$ and $a\in \C$. After that, with aid of the approach in \cite{N} and the results in \cite{TZ}, many authors have studied such simple modules over finite dimensional simple Lie algebras and some infinite dimensional Lie algebras (e.g., [\ref{CG}--\ref{CY}, \ref{HCS}, \ref{LG}, \ref{N1}, \ref{WZ}]). %\ref{YYX} \ref{YYX1}]).

%The Lie algebras of Block type (cf. \cite{B}) were firstly introduced by Block in 1958. They have close relations with the Virasoro algebra (cf. \cite{KPS}), $W$-infinity algebras (cf. \cite{B1}), ect. They are also some special cases of (generalized) Cartan type Lie algebra (cf. \cite{X}).
%It is well known that Cartan type Lie algebras have a long history, but their representation theory does not develop very well. It is very helpful to first study the representation theory of special cases of Cartan type Lie (super)algebras in order to better understand the representations of Cartan type Lie algebras. Partially due to these facts, we pay attention to study the representations of the Lie super algebras of Block type.

%In \cite{CY}, Chen and Yao study some non-weight modules over the loop-Virasoro algebra and Block type Lie algebra. Moreover, they provide a sufficient and necessary condition for such modules to be irreducible. In \cite{YYX1}, the authors construct a family free $U(\mathfrak{h})$-modules of the super-Virasoro algebra and give a complete classification of them.
However, due to the complexity of Lie superalgebras themselves (especially  infinite dimensional Lie superalgebras), the problem of classifying non-weight modules over Lie superalgebras has so far received insufficient % little
attentions in the literature (cf. \cite{YYX, YYX1}). We believe that a classification of non-weight modules of Lie superalgebras of Block type will certainly help us better understand the representation theory of Lie superalgebras of this type and other Lie superalgebras which are closely related to them, such as Lie superalgebra of Cartan type (cf. \cite{ZZ}), (generalized) super-Virasoro algebras (cf. \cite{SZ}). This is also our motivation to present the results here. In this paper, for a nonzero complex number $q$, we construct and classify the free $U(\C L_{0,0}\oplus\C G_{0,0})$-modules of rank 1 and the free $U(\C L_{0,0})$-modules of rank 2 respectively over the Lie superalgebra $S(q)$ of Block type defined in the following. %a family of non-weight

%better understand non-weight modules of those Lie superalgebras which are closely related to them (such as Lie superalgebra of Cartan Type, super $W$-infinity algebras).
\begin{defi}\label{def1}
 {\rm
\begin{itemize}
\item[(1)]
  Let $s=0$ or $\frac{1}{2}$ and $q\in \C^{*}$. {\it The Lie superalgebra $S(q)$ of Block type} (without center), is an infinite dimensional Lie superalgebra with the even part $S(q)_{\overline{0}}={\rm span}\{L_{m,i}\,|\,m\in \Z,\,i\in \Z_{+}\}$ and the odd part $S(q)_{\overline{1}}={\rm span}\{G_{l,j}\,|\,l\in s+\Z,\,j\in \Z_{+}\}$ together with the following relations
\begin{eqnarray}&\!\!\!\!\!\!\!\!\!\!\!\!\!\!\!\!\!\!\!\!\!\!\!\!&\label{J_a_b_}
[L_{m,i},L_{n,j}] =\big(n(i+q)-m(j+q)\big)L_{m+n,i+j},\\
&\!\!\!\!\!\!\!\!\!\!\!\!\!\!\!\!\!\!\!\!\!\!\!\!&\label{J_a_b c_}
[L_{m,i},G_{l,j}] =\Big(l\big(i+q\big)-m\Big(j+\frac{q}{2}\Big)\Big)G_{m+l,i+j},\\
&\!\!\!\!\!\!\!\!\!\!\!\!\!\!\!\!\!\!\!\!\!\!\!\!&\label{J_a_b_c d}
[G_{l,i},G_{r,j}] = 2qL_{l+r,i+j}.
\end{eqnarray}
\item[(2)]
The first class of Lie superalgebras  $S(q)$ with $s = 0$ are referred to as {\it Ramond-Block (RB) algebras}, denoted by $\mathcal{R}$, and the second class $S(q)$ with $s=\frac{1}{2}$ as {\it Neveu-Schwarz-Block (NSB) algebras}, denoted by $\mathcal{L}$.
\end{itemize}
}\end{defi}
Note that $S(q)$ contains a centerless super-Virasoro subalgebra $SVir[s]$, which is given by
\begin{equation*}
  SVir[s]={\rm span}\{L_{m},G_{l}\,|\, m\in \Z,\ l\in s+\Z\},
\end{equation*}
where $L_{m}=\frac{1}{q}L_{m,0}$ and $G_{l}=\frac{1}{q}G_{l,0}$. In particular, $SVir[0]$ is the centerless Ramond algebra and $SVir[\frac{1}{2}]$ is the centerless Neveu-Schwarz algebra.

Recall that the Witt algebra, also known as  the well-known (centerless) Virasoro algebra, $\W$ is the derivation Lie algebra of the Laurent polynomial algebra in one variable. More precisely, $\W=\bigoplus_{\al\in\Z}\C L_{\al}$ as a vector space and its Lie bracket is given by
\begin{equation}\label{Witt-1}
  [L_{\al},L_{\b}]=(\b-\al)L_{\al+\b},\ \ \ \forall \ \al,\,\b\in \Z.
\end{equation}
The Heisenberg-Virasoro algebra $\mathcal{H}$ is
the complex Lie algebra that has a basis $\{L_{\al},I_{\b},C_{1},C_{2},C_{3}\,|\,\al,\,\b\in \Z\}$ subject to the following Lie brackets:
\begin{eqnarray}&\!\!\!\!\!\!\!\!\!\!\!\!\!\!\!\!\!\!\!\!\!\!\!\!&
\label{Hei-1}  [L_{\al},L_{\b}] = (\b-\al)L_{\al+\b}+\delta_{\al+\b,0}\frac{\al^{3}-\al}{12}C_{1}, \\
  &\!\!\!\!\!\!\!\!\!\!\!\!\!\!\!\!\!\!\!\!\!\!\!\!&
  [L_{\al},I_{\b}] = \b I_{\al+\b}+\delta_{\al+\b,0}(\al^{2}+\al)C_{2}, \\
  &\!\!\!\!\!\!\!\!\!\!\!\!\!\!\!\!\!\!\!\!\!\!\!\!&
\label{Hei-3}  [I_{\al},I_{\b}] = \al \delta_{\al+\b,0}C_{3}.
\end{eqnarray}

\begin{remark}
\rm\begin{itemize}\parskip-3pt
              \item [(1)]The even part $SVir[s]_{\overline{0}}$ of the super-Virasoro algebra $SVir[s]$ is isomorphic to the Witt algebra $\W$.
              \item[(2)]Let $L$ be the subalgebra of $S(-1)$ generated by $\{L_{m,0}, L_{m,1}\,|\, m\in \Z\}$, then we can see that $L$ is isomorphic to the Heisenberg-Virasoro algebra $\mathcal{H}$ (modulo some center).
            \end{itemize}
\end{remark}
The main results of the present paper are the following theorems.
\begin{theo}\label{theo1.5}Let $\R$ be a Ramond-Block algebra defined in Definition $\ref{def1}$. % with $s=0$, denote by $\R$.
Let $\lambda, \mu\in \C^{*}$ and $a,a',b,b'\in \C$. Then for $\Omega_{\R}(\lambda,a,b):=\C[t^{2}]\oplus t\C[t^{2}]$, we have
\begin{itemize}
  \item[\rm(1)]$\Omega_{\R}(\lambda,a,b)$ is a $\R$-module with the module structure defined in $(\ref{1})$--$(\ref{4})$,
  \item[\rm(2)] $\Omega_{\R}(\lambda,a,b)$ is simple if and only if $a\neq0$ or $b\neq0$,
  \item[\rm(3)] $\Omega_{\R}(\lambda,a,b)\cong \Omega_{\R}(\mu,a',b')$ as $\R$-modules if and only if $\lambda=\mu$, $a=a'$ and $b=b'$,
  \item[\rm(4)] Any free $U(\eta)$-module of rank 1 over $\R$ is isomorphic to $\Omega_{\R}(\lambda,a,b)$ for some $\lambda\in \C^{*}$ and $a,b\in\C$, where $\eta=\C L_{0,0}\oplus\C G_{0,0}$.
  %Any $\R$-module that is free of rank 1 when restricted to $U(\eta)$ is isomorphic to some $\Omega(\lambda,a,b)_{\R}$ for $\lambda\in \C^{*}$ and $a,b\in\C$,
\end{itemize}

\end{theo}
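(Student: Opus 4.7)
The plan is to dispatch parts (1)--(3) by direct computation and to reserve the main effort for part (4). For (1) I will verify that the formulas $(\ref{1})$--$(\ref{4})$ respect each of the three bracket relations of Definition \ref{def1}; the checks reduce to identifying coefficients of monomials $t^{k}$ in polynomial identities parameterized by the subscripts $m,n,i,j,l,r$—tedious but mechanical and with no conceptual obstacle.

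For part (2), when $a=b=0$ I expect the formulas to degenerate so that $\C\cdot 1$ (or an analogous minimal subspace) is $\R$-invariant, giving a proper nonzero submodule. For the converse direction, given a nonzero submodule $N$ and $0\neq f\in N$, I will apply operators of the form $L_{m,0},G_{l,0},L_{0,1},G_{0,1}$ to reduce $\deg f$ step by step; the hypothesis $a\neq 0$ or $b\neq 0$ ensures that the leading-order reduction does not annihilate $f$ at every stage. Iterating until a nonzero constant lies in $N$ and then applying $G_{0,0}$ (which acts essentially as multiplication by $t$) gives $N=\Omega_{\R}(\lambda,a,b)$. For part (3), any $\R$-isomorphism $\varphi$ is determined by $\varphi(1)$; matching the actions of $L_{0,0},G_{0,0}$ first forces $\varphi(1)$ to be a nonzero constant, and then comparing the actions of $L_{1,0},L_{0,1},G_{0,1}$ on both sides yields $\lambda=\mu$, $a=a'$, $b=b'$ in sequence.

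Part (4) is the central task. Let $M$ be a free $U(\eta)$-module of rank $1$ with generator $w$. Since $[L_{0,0},G_{0,0}]=0$ and $G_{0,0}^{2}=qL_{0,0}$, the superalgebra $U(\eta)$ is identified with $\C[t]$ via $G_{0,0}\leftrightarrow t$, $L_{0,0}\leftrightarrow t^{2}/q$, so $M\cong \C[t^{2}]\oplus t\C[t^{2}]$ with $G_{0,0}$ acting by multiplication by $t$ and $L_{0,0}$ by multiplication by $t^{2}/q$. Writing $F_{m,i}(t):=L_{m,i}\cdot w$ and $H_{l,j}(t):=G_{l,j}\cdot w$, I will first exploit the super-$\Z/2$-grading preserved by every homogeneous element of $\R$ to force $F_{m,i}\in\C[t^{2}]$ and $H_{l,j}\in t\C[t^{2}]$, and then use the commutation relations with $L_{0,0}$ and $G_{0,0}$ to constrain the general shape of these polynomials. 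A bootstrap argument follows: solve first for $F_{1,0}$ and $H_{1,0}$ (which introduces the parameter $\lambda\in\C^{*}$), next for $F_{0,1}$ and $H_{0,1}$ (introducing $a,b\in\C$), and finally propagate to arbitrary $(m,i)$ and $(l,j)$ via brackets such as $[L_{1,0},L_{m-1,i}]$, $[L_{0,1},L_{m,i-1}]$, and $[L_{1,0},G_{l-1,j}]$. Matching the closed-form outcome with $(\ref{1})$--$(\ref{4})$ then yields $M\cong \Omega_{\R}(\lambda,a,b)$.

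The main obstacle is the consistency of this last propagation: one must verify that the three-parameter ansatz extracted from the initial low-index data is compatible with every remaining commutator relation of $\R$, including the odd-odd relations $[G_{l,i},G_{r,j}]=2qL_{l+r,i+j}$ which couple $F$'s and $H$'s in a non-trivial way. A secondary difficulty will arise in showing that the ``shift'' coefficients appearing in the $L_{0,i}$ and $G_{0,j}$ actions reduce, as the statement $a,b\in\C$ suggests, to single scalars independent of $i,j$ (up to combinatorial factors involving $i+q$ and $j+q/2$); this requires a careful induction rather than a naive parameter count.
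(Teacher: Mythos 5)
Your outline for (1) matches the paper's (a direct verification of all bracket relations), and your architecture for (4) is essentially the paper's as well: both exploit $U(\eta)=\C[L_{0,0}]\oplus G_{0,0}\C[L_{0,0}]$, determine the action of each generator on the cyclic vector, and propagate. Where you genuinely diverge is in (2) and (3): the paper does not run a degree-reduction or a generator-matching argument from scratch. Instead it observes that $\Omega_{\R}(\lambda,a,b)_{\overline{0}}$, viewed as a module over $\R_{\overline{0}}$, restricts to the known module $\Omega(\lambda,a)$ over the Witt algebra when $q\neq-1$ and to $\Omega(\lambda,a,b)$ over the Heisenberg--Virasoro algebra when $q=-1$, and then quotes the published simplicity and isomorphism criteria (Theorems \ref{lemma-123} and \ref{lemma-456}); the passage between $N_{\overline{0}}$ and $N_{\overline{1}}$ is handled by the cross-parity actions \eqref{3} and \eqref{4}. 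Your self-contained route is workable but strictly more laborious, and it buys nothing here since the $\W$/$\H$ results are exactly what is needed (in particular, the case $a=0$, $b\neq0$, $q=-1$, where simplicity comes from the $L_{m,1}$ operators acting as shifts, is delicate in a bare-hands degree argument).

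Two concrete problems. First, in (2) your candidate proper submodule $\C\cdot 1$ for $a=b=0$ is not invariant: $G_{m,0}1=\lambda^{m}t\notin\C\cdot1$. The correct proper submodule is $t^{2}\C[t^{2}]\oplus t\C[t^{2}]$ (the analogue of $t\C[t]\subset\Omega(\lambda,0)$); your hedge ``or an analogous minimal subspace'' does not identify it. Second, and more seriously, in (4) you explicitly defer the decisive step --- determining $G_{l,j}w$ --- to ``the main obstacle'' without resolving it. The paper's resolution is to write $G_{m,i}1=tH_{m,i}(t^{2})1$, compute $G_{m,i}^{2}1$ two ways using $G_{0,0}^{2}=qL_{0,0}$ and $[G_{m,i},G_{0,0}]=2qL_{m,i}$, and equate with $qL_{2m,2i}1$ to obtain the quadratic functional equation
\begin{equation*}
H_{m,i}\big(t^{2}-mq\big)\Big(2\lambda^{m}\big(\delta_{i,0}(t^{2}-mqa)+\delta_{q,-1}\delta_{i,1}b\big)-t^{2}H_{m,i}(t^{2})\Big)=\lambda^{2m}\delta_{2i,0}\big(t^{2}-2mqa\big),
\end{equation*}
which forces $H_{m,0}=\lambda^{m}$ and $H_{m,i}=0$ for $i\geq1$ by comparing degrees. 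Without this (or an equivalent) argument your bootstrap does not close, because nothing in the even--odd brackets alone rules out nonconstant $H_{m,0}$ or nonzero $H_{m,i}$ for $i\geq1$. You should also note that the propagation to general indices is unnecessary once $L_{m,i}1$ and $G_{m,i}1$ are known: the twisted commutation $X_{m,i}(L_{0,0})^{r}=(L_{0,0}-mq)^{r}X_{m,i}$ immediately gives the action on all of $M$.
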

\begin{theo}\label{theo1.6}Let $\L$ be a Neveu-Schwarz-Block algebra defined in Definition $\ref{def1}$.  %with $s=\frac{1}{2}$.
Let $\lambda,\,\mu\in \C^{*}$ and $a,a',b,b'\in \C$. Then for $\Omega_{\L}(\lambda,a,b):=\C[t]\oplus \C[x]$, we have
\begin{itemize}
  \item[\rm(1)]$\Omega_{\L}(\lambda,a,b)$ is an $\L$-module with the module structure defined in $(\ref{F1})$--$(\ref{F4})$,
  \item[\rm(2)] $\Omega_{\L}(\lambda,a,b)$ is simple if and only if $a\neq0$ or $b\neq0$,
  \item[\rm(3)] $\Omega_{\L}(\lambda,a,b)\cong \Omega_{\L}(\mu,a',b')$ as $\L$-modules if and only if $\lambda=\mu$, $a=a'$ and $b=b'$,
  \item[\rm(4)] Any $\L$-module that is free of rank 2 when restricted to $U(\mathfrak{h})$ is isomorphic to $\Omega_{\L}(\lambda,a,b)$ for some  $\lambda\in \C^{*}$ and $a,b\in\C$, where $\mathfrak{h}=\C L_{0,0}$.
\end{itemize}

\end{theo}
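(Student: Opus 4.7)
The plan is to prove the four parts of Theorem \ref{theo1.6} in order, treating (1)--(3) as relatively routine and concentrating the bulk of the work on the classification in (4). For (1), I would verify that the prescribed action respects each of the three families of defining brackets (\ref{J_a_b_})--(\ref{J_a_b_c d}); evaluated on a monomial $t^n$ or $x^n$, each relation becomes a polynomial identity in $m, n, i, j$ that can be checked by direct expansion. Part (2) is handled by the standard submodule-capture argument: if $N$ is a nonzero $\mathcal{L}$-submodule, pick a nonzero parity-homogeneous element of minimal $t$- or $x$-degree and act by a carefully chosen $L_{m,i}$ or $G_{l,j}$ to strictly lower the degree, with the hypothesis $a \neq 0$ or $b \neq 0$ ensuring that the relevant coefficient does not vanish; once $N$ contains a nonzero constant, the actions of $L_{m,0}$ with $m \neq 0$ and $G_{l,0}$ generate everything else, and the converse $a = b = 0$ case is disposed of by exhibiting an explicit proper submodule. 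Part (3) follows from the observation that any isomorphism $\phi$ must preserve the $\Z_2$-grading and commute with $L_{0,0}$, hence sends each generator $1$ to a scalar multiple of itself; comparing the images of $L_{1,0}\cdot 1$, $L_{1,1}\cdot 1$ and $G_{1/2,0}\cdot 1$ then forces $\lambda = \mu$, $a = a'$, $b = b'$.

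The substantive part is (4). Let $M$ be an $\mathcal{L}$-module whose restriction to $U(\mathfrak{h}) = \C[L_{0,0}]$ is free of rank $2$. Since $L_{0,0}$ is even, its action preserves the $\Z_2$-grading, and combining this with freeness yields parity-homogeneous generators $v_0 \in M_{\bar 0}$, $v_1 \in M_{\bar 1}$ with $M_{\bar 0} = U(\mathfrak{h})v_0$ and $M_{\bar 1} = U(\mathfrak{h})v_1$. I would then write $L_{m,i} v_0 = P_{m,i}(L_{0,0}) v_0$, $L_{m,i} v_1 = \tilde P_{m,i}(L_{0,0}) v_1$, $G_{l,j} v_0 = R_{l,j}(L_{0,0}) v_1$, $G_{l,j} v_1 = \tilde R_{l,j}(L_{0,0}) v_0$ (the off-parity components vanish by $\Z_2$-grading) and translate the brackets (\ref{J_a_b_})--(\ref{J_a_b_c d}) into an infinite system of polynomial identities among the $P, \tilde P, R, \tilde R$.

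The main obstacle is reducing this overdetermined system to the three-parameter family described by (F1)--(F4). I plan to handle it in two stages. First, I restrict to the super-Virasoro subalgebra $SVir[1/2] = \mathrm{span}\{L_{m,0}, G_{l,0}\}$ and apply the known rank-$2$ classification of free $U(\C L_{0})$-modules over the Neveu-Schwarz algebra; this pins down the actions of $L_{m,0}$ and $G_{l,0}$ up to the two parameters that will become $\lambda$ and $b$. Second, I propagate to arbitrary second index: the bracket $[L_{m,i}, L_{0,0}] = m(i+q)L_{m,i}$ constrains polynomial degrees, $[L_{0,i}, L_{0,j}] = 0$ forces the $L_{0,i}$-actions to commute with one another, and the mixed brackets $[L_{0,i}, L_{m,0}]$ and $[L_{0,i}, G_{l,0}]$ tie every $L_{m,i}$ and $G_{l,j}$ action to the zero-second-index data together with the single new datum $L_{0,1}v_0$; a recursion in $i$ then determines everything, introducing exactly one new parameter $a$. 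After identifying $v_0 \leftrightarrow 1 \in \C[t]$ and $v_1 \leftrightarrow 1 \in \C[x]$ the resulting action formulas match (F1)--(F4), yielding $M \cong \Omega_{\mathcal{L}}(\lambda, a, b)$.
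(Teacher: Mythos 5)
Your overall architecture for part (4) --- parity-homogeneous generators, pinning down the even actions via known rank-one classifications, then propagating to the remaining generators through brackets --- matches the paper's, but the propagation step has a genuine gap. The bracket you quote, $[L_{m,i},L_{0,0}]=m(i+q)L_{m,i}$, is miscomputed: relation $(\ref{J_a_b_})$ gives $[L_{m,i},L_{0,0}]=-mqL_{m,i}$, independent of $i$, so it yields only the shift identity $L_{m,i}L_{0,0}^{r}=(L_{0,0}-mq)^{r}L_{m,i}$ and constrains no degrees. The brackets you actually need are $[L_{0,i},L_{m,0}]=m(i+q)L_{m,i}$ and $[L_{0,i},G_{l,0}]=l(i+q)G_{l,i}$, and these degenerate exactly when $i+q=0$, i.e.\ when $q$ is a negative integer $-i$. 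That degenerate locus is the heart of the theorem: for $q=-1$, $i=1$ it is where the parameter $b$ lives, and for $q=-n$ with $n\ge 2$ one must still prove that $L_{m,n}$ and $G_{l,n}$ kill the generators, which your recursion cannot see. The paper circumvents this by feeding each $G_{m,i}1$ into \emph{two} independent relations, $[L_{m-\frac{1}{2},i},G_{\frac{1}{2},0}]$ and $[L_{m-\frac{1}{2},i-1},G_{\frac{1}{2},1}]$, obtaining the pair $(\ref{eqi123})$--$(\ref{eqi12345})$ (and likewise $(\ref{eqi678})$--$(\ref{eqi789})$) whose coefficients $i-\left(m-\frac{3}{2}\right)q$ and $i-2m-\left(m-\frac{3}{2}\right)q$ differ by $2m\neq0$ and hence never vanish simultaneously; some device of this kind is indispensable and is absent from your plan. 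Relatedly, your parameter bookkeeping is off: the Neveu--Schwarz/Witt step supplies $\lambda$ and $a$ (not $\lambda$ and $b$); the extra datum coming from the second index is $b$, and only when $q=-1$.

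For (2) and (3) you take a genuinely different route. The paper embeds $\L$ into $\R$ via $\tau(L_{m,i})=\frac{1}{2}L_{2m,i}$, $\tau(G_{r,j})=\frac{1}{\sqrt{2}}G_{2r,j}$, proves $\Omega_{\L}(\lambda,a,b)\cong\Omega_{\R}(\lambda^{1/2},a,2b)$ as $\L$-modules, and reduces everything to Theorem $\ref{theo1.5}$ and thence to the known $\W$- and $\H$-classifications (Theorems $\ref{lemma-123}$ and $\ref{lemma-456}$); you propose direct arguments instead. These are repairable but not as stated: acting by $L_{m,0}$ \emph{raises} the $t$-degree, so ``act to strictly lower the degree'' is not the right mechanism. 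The working version is that $N\cap\C[t]$ is an ideal of $\C[t]$ (since $L_{0,0}$ is multiplication by $t$), and its stability under all $L_{m,0}$ and $L_{m,1}$ forces it to be all of $\C[t]$ precisely when $a\neq0$ or $b\neq0$ --- which is essentially re-proving Theorems $\ref{lemma-123}$ and $\ref{lemma-456}$ rather than citing them.
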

This paper is organized as follows. In section 2, we %collect our conventions,
recall some definitions and preliminary results related to  classifications of simple modules over the Witt algebra $\W$ and the Heisenberg-Virasoro algebra $\mathcal{H}$ respectively. In section 3, we construct and classify the free $U(\eta)$-modules of rank $1$ over the Ramond-Block algebra $\R$.
%we show the proof of Theorem \ref{theo1.5}.
In section 4, we study the Neveu-Schwarz-Block algebra $\L$ and construct the free $U(\mathfrak{h})$-modules of rank 2.
%prove the Theorem \ref{theo1.6} (1), (2) and (3).
Then a classification of these modules over $\L$ is given in section 5.

Throughout this paper, we denote by $\C$, $\C^{*}$, $\Z$, $\Z_{+}$ and $\N$ the sets of complex numbers, nonzero complex numbers, integers, non-negative integers and positive integers respectively.

\section{Preliminaries}
We begin by briefly introducing our conventions. In this paper, all vector superspaces (resp. superalgebras, supermodules) $V=V_{\overline{0}}\oplus V_{\overline{1}}$ are defined over $\C$. We always assume that a module $M$ of a superalgebra $L$ is a supermodule, i.e., $L_{\overline{i}}\cdot M_{\overline{j}}\subseteq M_{\overline{i}+\overline{j}}$ for all $\overline{i},\overline{j}\in \Z_{2}$, where $\Z_{2}=\{\overline{0},\overline{1}\}$.

%\textcolor[rgb]{1.00,0.00,0.00}{Note that the Witt algebra} $\W$ is the derivation Lie algebra of the Laurent polynomial algebra in one variable. More precisely, $\W=\bigoplus_{\al\in\Z}\C L_{\al}$ as a vector space and its Lie bracket is given by
%\begin{equation*}
%  [L_{\al},L_{\b}]=(\b-\al)L_{\al+\b},\ \ \ \forall \ \ \al,\b\in \Z.
%\end{equation*}
%The Heisenberg-Virasoro algebra $\mathcal{L}$ is
%the complex Lie algebra that has a basis $\{L_{\al},I_{\b},C_{1},C_{2},C_{3}\,|\,\al,\b\in \Z\}$ subject to the following Lie brackets:
%\begin{eqnarray*}&\!\!\!\!\!\!\!\!\!\!\!\!\!\!\!\!\!\!\!\!\!\!\!\!&
%  [L_{\al},L_{\b}] = (\b-\al)L_{\al+\b}+\frac{\al^{3}-\al}{12}\delta_{\al+\b,0}C_{1}, \\
 % &\!\!\!\!\!\!\!\!\!\!\!\!\!\!\!\!\!\!\!\!\!\!\!\!&
%  [L_{\al},I_{\b}] = \b I_{\al+\b}+\delta_{\al+\b,0}(\al^{2}+\al)C_{2}, \\
%  &\!\!\!\!\!\!\!\!\!\!\!\!\!\!\!\!\!\!\!\!\!\!\!\!&
 % [I_{\al},I_{\b}] = \al \delta_{\al+\b,0}C_{3}.
%\end{eqnarray*}

%\begin{remark}
%\rm\begin{itemize}\parskip-3pt
            %  \item [(1)]The even part $SVir[0]_{\overline{0}}$ of the super-Virasoro algebra $SVir[0]$ is isomorphic to the Witt algebra $\W$.
 %             \item[(2)]Let $L$ be a subalgebra of $S(-1)$ and generated by $\{L_{m,0}, L_{m,1}\,|\, m\in \Z\}$, then we can see that $L$ is isomorphic to the Heisenberg-Virasoro algebra $\mathcal{L}$.
%            \end{itemize}
%\end{remark}

Denote by $\C[t]$ the polynomial algebra over $\C$ in indeterminate $t$. Let $\lambda\in \C^{*}$, $a,b\in \C$.
Let $\W$ be the Witt algebra defined in \eqref{Witt-1}.
Then, for $f(t)\in \C[t]$, $\al\in \Z$ and $j=1,2,3$, the $\W$-module structure on $\Omega(\lambda,a):=\C[t]$ is given by
\begin{equation*}%\label{ab6}
  L_{\al}f(t)=\lambda^{\al}(t-\al a)f(t-\al).
\end{equation*}
Let  $\mathcal{H}$ be the Heisenberg-Virasoro algebra defined in \eqref{Hei-1}--\eqref{Hei-3}.
Then the $\mathcal{H}$-module structure on $\Omega(\lambda,a,b):=\C[t]$ is given by
\begin{eqnarray*}%\label{ab7}
% \nonumber to remove numbering (before each equation)
  L_{\al}f(t)=\lambda^{\al}(t-\al a)f(t-\al),\ \ \  I_{\al}f(t)=b\lambda^{\al}f(t-\al)\ \ \mbox{and} \ \ C_{j}f(t)=0.
\end{eqnarray*}
%where $\al\in \Z$, $f(t)\in \C[t]$ and $j=1,2,3$.

Keep the notations as above, we need the following results on $\Omega(\lambda,a)$ and $\Omega(\lambda,a,b)$ respectively for later use. %(cf. [\ref{CG}, \ref{TZ}]).
\begin{theo}\label{lemma-123}\rm{(see [\ref{LZ}, \ref{TZ}]).} {\it Any $\W$-module that is free of rank 1 when restricted to $U(\C L_{0})$ is isomorphic to $\Omega(\lambda,a)$ for some $\lambda\in \C^{*}$ and $a\in\C$. Moreover, $\Omega(\lambda,a)$ is simple if and only if $a \in \C^{*}$ and  $\Omega(\lambda,a)\cong \Omega(\lambda',a')$ as $\W$-modules if and only if $\lambda=\lambda'$ and $a=a'$.}
\end{theo}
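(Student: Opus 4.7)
Proof proposal for Theorem \ref{lemma-123}. The plan is in three steps.

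\textbf{Step 1 (Reduction to shift modules).} Let $M$ be such a module, with free $U(\C L_0)$-generator $v_0$; identify $M \cong \C[t]$ via $L_0^k v_0 \leftrightarrow t^k$, so that $L_0$ acts as multiplication by $t$. Writing $L_n \cdot 1 = g_n(t) \in \C[t]$ and using only the relation $[L_0, L_n] = n L_n$, an easy induction on $k$ yields $L_n \cdot t^k = (t-n)^k g_n(t)$, hence $L_n \cdot f(t) = f(t-n)\, g_n(t)$ for every $f \in \C[t]$. The module structure is thus encoded in the single sequence $\{g_n\}_{n \in \Z}$ (with $g_0 = t$).

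\textbf{Step 2 (Solving the functional equation).} The remaining Virasoro relations $[L_m, L_n] = (n-m)L_{m+n}$, applied to the vector $1$, give
\[
 g_n(t-m)\,g_m(t) - g_m(t-n)\,g_n(t) = (n-m)\,g_{m+n}(t), \qquad m,n \in \Z.
\]
I would first prove $\deg g_n = 1$ for every $n$. Comparing leading coefficients, the $t^{d_m + d_n - 1}$-coefficient of the left-hand side equals $c_m c_n(n d_m - m d_n)$ (where $c_k, d_k$ are the leading coefficient and degree of $g_k$); specializing to $(m,n) = (-k, k)$ (whose right-hand side is $2kt$) forces $d_k + d_{-k} = 2$ for all $k \neq 0$, and combining with $(m,n) = (1, k)$ and a short induction on $|k|$ rules out $d_k \in \{0, 2\}$ and pins down $d_k = 1$. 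Writing $g_n(t) = \alpha_n t + \beta_n$, the functional equation splits into $\alpha_{m+n} = \alpha_m \alpha_n$ (hence $\alpha_n = \lambda^n$ for a unique $\lambda \in \C^*$) and $n\alpha_m\beta_n - m\alpha_n\beta_m = (n-m)\beta_{m+n}$ (whose general solution is $\beta_n = \lambda^n(A + Bn)$); the $m+n = 0$ case, normalized by $g_0 = t$, forces $A = 0$, and setting $a := -B$ gives $g_n(t) = \lambda^n(t - na)$, so $M \cong \Omega(\lambda, a)$.

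\textbf{Step 3 (Simplicity and isomorphisms).} If $a = 0$ then $t\C[t]$ is a proper nonzero submodule, so $\Omega(\lambda, 0)$ is reducible. If $a \neq 0$, let $N \subseteq \Omega(\lambda, a)$ be a nonzero submodule and $f \in N$ of minimal degree $d$. The element $Y_n f := L_n f - \lambda^n L_0 f \in N$ has degree $\le d$ with $t^d$-coefficient $-\lambda^n c_d n(d+a)$; taking suitable linear combinations of $Y_n f$ and $Y_m f$ (for $n \neq m$) cancels this top term and produces an element of $N$ of strictly smaller degree, which by minimality must be zero. Analyzing the resulting polynomial-in-$n$ identity (with a short case analysis for the special values $a = (1-d)/2$ with $d \geq 2$) forces $d = 0$, so $1 \in N$; since $L_n \cdot 1 = \lambda^n(t - na)$ and $a \neq 0$, the module $\Omega(\lambda, a)$ is cyclically generated by $1$, and hence $N = \Omega(\lambda, a)$. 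For the isomorphism classification, any $\W$-isomorphism $\phi \colon \Omega(\lambda, a) \to \Omega(\mu, a')$ commutes with $L_0$, so is $\C[t]$-linear and hence multiplication by a nonzero constant; equating the actions of $L_1$ on $1$ then gives $\lambda = \mu$ and $a = a'$. I expect the main obstacle to be the degree bound in Step 2, which needs coupled analysis of leading coefficients across several commutation relations rather than a single slick observation.
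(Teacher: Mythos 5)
Your proposal is essentially correct, but note that the paper itself gives no proof of this statement: Theorem \ref{lemma-123} is quoted verbatim from the references [\ref{LZ}, \ref{TZ}], so there is nothing internal to compare against. What you have written is, in outline, the standard argument from those sources: the identification $L_n f(t)=f(t-n)g_n(t)$ via $L_nL_0=(L_0-n)L_n$, the functional equation $g_n(t-m)g_m(t)-g_m(t-n)g_n(t)=(n-m)g_{m+n}(t)$, the leading-coefficient computation giving $c_mc_n(nd_m-md_n)$ in degree $d_m+d_n-1$, and the specialization $(m,n)=(-k,k)$ forcing $d_k+d_{-k}=2$ are all sound (one should first observe that no $g_k$ can vanish, since $(m,n)=(-k,k)$ would then give $0=2kt$). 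The remaining elimination of $d_k\in\{0,2\}$ does close: once $d_1=d_{-1}=1$, the pairs $(m,n)=(\mp 1,k)$ give $d_{k\mp1}=d_k$ because the relevant coefficients $k\pm d_k$ cannot vanish for the admissible signs of $k$. In Step 3 your argument can be streamlined: for $n\neq 0$ the normalized elements $W_n=\lambda^{-n}n^{-1}Y_nf$ all have the same $t^d$-coefficient $-c_d(d+a)$, so $W_n-W_m\in N$ has degree $<d$ and vanishes; then $(t-na)f(t-n)-tf(t)=nW(t)$ holds identically in $n$, and since the coefficient of $n^{d+1}$ on the left is $(-1)^{d+1}ac_d\neq 0$, one gets $d=0$ with no case analysis on special values of $a$ (the degenerate case $d+a=0$ is handled by setting $t=0$ in $(t-na)f(t-n)=tf(t)$, which forces $f$ to vanish at all nonzero integers). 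Also, once $1\in N$, the conclusion $N=\C[t]$ follows from the action of $L_0$ alone, so the appeal to $a\neq0$ at that point is superfluous. These are presentational refinements, not gaps.
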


%\begin{lemm}\label{lemma-123}\rm{(see [\ref{LZ}, \ref{TZ}])}
% {\it  then the following statements on $\Omega(\lambda,a)$ hold.
%\begin{itemize}
  %\item [\rm(1)] $\Omega(\lambda,a)$ is a $\W$-module.
%  \item [\rm(1)] $\Omega(\lambda,a)$ is simple if and only if $\lambda,a \in \C^{*}$.% Moreover, $t\Omega(\lambda,0)$ is the unique nonzero proper submodule of $\Omega(\lambda,0)$ with codimension 1.
 % \item [\rm(2)] $\Omega(\lambda,a)\cong \Omega(\lambda',a')$ as a $\W$-module if and only if $\lambda=\lambda'$ and $a=a'$.
%  \item [\rm(3)]Any $\W$-module that is free of rank 1 when restricted to $U(\C L_{0})$ is isomorphic to some $\Omega(\lambda,a)$ for $\lambda\in \C^{*}$ and $a\in\C$.
%\end{itemize}}
%\end{lemm}
\begin{theo}\label{lemma-456}\rm{(see [\ref{CG}])}. {\it Any $\mathcal{H}$-module that is free of rank 1 when restricted to $U(\C L_{0})$ is isomorphic to $\Omega(\lambda,a,b)$ for some $\lambda\in \C^{*}$ and $a,b\in\C$. Moreover, $\Omega(\lambda,a,b)$ is simple if and only if $a\neq0$ or $b\neq0$ and $\Omega(\lambda,a,b)\cong \Omega(\lambda',a',b')$ as $\mathcal{H}$-modules if and only if $\lambda=\lambda'$, $a=a'$ and $b=b'$.}
\end{theo}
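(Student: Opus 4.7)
The plan is to prove the theorem in three parts: (i) verify the module structure and the isomorphism criterion, (ii) prove the simplicity criterion, (iii) classify all free rank-1 $U(\C L_0)$-modules. The substantive content is in (iii); the other two parts are comparatively short.

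For (i), I would first check that the formulas for $L_\alpha, I_\alpha, C_j$ on $\C[t]$ satisfy \eqref{Hei-1}--\eqref{Hei-3} by direct computation. For the isomorphism part, if $\phi: \Omega(\lambda,a,b) \to \Omega(\lambda',a',b')$ is an $\mathcal{H}$-module isomorphism, then since $L_0$ acts as multiplication by $t$ on both sides, $\phi$ commutes with multiplication by $t$, hence is a $\C[t]$-module map between free rank-1 $\C[t]$-modules, so $\phi(f) = c f$ for some $c \in \C^*$. Compatibility with $L_1$ gives $\lambda(t-a) = \lambda'(t-a')$, forcing $\lambda=\lambda'$ and $a=a'$; compatibility with $I_1$ then gives $b = b'$. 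For (ii), when $a = b = 0$ the formulas give $L_\alpha f = \lambda^\alpha t f(t-\alpha)$ and $I_\alpha f = 0$, so $t\C[t]$ is a proper nonzero submodule. Conversely, assume $a\neq 0$ or $b\neq 0$ and let $N\subseteq\Omega(\lambda,a,b)$ be a nonzero submodule; pick $f\in N$ of minimal degree $d$. If $b\neq 0$, then $I_\alpha f=b\lambda^\alpha f(t-\alpha)\in N$ gives $f(t-\alpha)\in N$, hence $f(t)-f(t-\alpha)\in N$ has degree $<d$, forcing $f$ constant; then $L_\alpha\cdot 1=\lambda^\alpha(t-\alpha a)$ and $I_\alpha\cdot 1=b\lambda^\alpha$ together produce $t\in N$, so $N=\C[t]$. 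If $b=0$ but $a\neq 0$, the $I_\alpha$ act trivially and the module reduces to the Witt module $\Omega(\lambda,a)$, simple by Theorem~\ref{lemma-123}.

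For (iii), let $M$ be an $\mathcal{H}$-module free of rank $1$ over $U(\C L_0)\cong\C[L_0]$. Fix a generator $v$ and identify $M=\C[t]$ via $L_0^k v\leftrightarrow t^k$, so $L_0$ acts as multiplication by $t$. The relation $[L_0,L_\alpha]=\alpha L_\alpha$ applied to $f\in M$ yields $L_\alpha(tf)=(t-\alpha)L_\alpha f$, and induction on $\deg f$ gives $L_\alpha f(t)=p_\alpha(t)f(t-\alpha)$, where $p_\alpha(t):=L_\alpha\cdot 1$. The same reasoning with $[L_0,I_\alpha]=\alpha I_\alpha$ (whose $\delta$-correction vanishes at $\alpha=0$) gives $I_\alpha f(t)=q_\alpha(t)f(t-\alpha)$ with $q_\alpha:=I_\alpha\cdot 1$, while centrality of each $C_j$ forces $C_j$ to act as multiplication by some $c_j(t)\in\C[t]$. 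Substituting these forms into the three defining brackets applied to $1$ produces the functional equations
\begin{align*}
p_\alpha(t)\,p_\beta(t-\alpha)-p_\beta(t)\,p_\alpha(t-\beta) &= (\beta-\alpha)\,p_{\alpha+\beta}(t)+\delta_{\alpha+\beta,0}\,\tfrac{\alpha^3-\alpha}{12}\,c_1(t),\\
p_\alpha(t)\,q_\beta(t-\alpha)-q_\beta(t)\,p_\alpha(t-\beta) &= \beta\,q_{\alpha+\beta}(t)+\delta_{\alpha+\beta,0}\,(\alpha^2+\alpha)\,c_2(t),\\
q_\alpha(t)\,q_\beta(t-\alpha)-q_\beta(t)\,q_\alpha(t-\beta) &= \alpha\,\delta_{\alpha+\beta,0}\,c_3(t).
\end{align*}

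The main obstacle is extracting from these equations the explicit forms $p_\alpha(t)=\lambda^\alpha(t-\alpha a)$, $q_\alpha(t)=b\lambda^\alpha$, and $c_j(t)=0$. Following the Witt-algebra template, I would first use the $(\alpha,\beta)=(1,-1)$ instance of the first equation (where the central term vanishes) together with the recursion $p_{n+1}(t)=\tfrac{1}{n-1}\bigl(p_1(t)p_n(t-1)-p_n(t)p_1(t-2)\bigr)$ coming from $[L_1,L_n]=(n-1)L_{n+1}$ to show by induction that every $p_\alpha$ has degree at most $1$ in $t$; writing $p_1(t)=\lambda(t-a)$ and solving the $(1,-1)$ equation pins down $p_{-1}(t)=\lambda^{-1}(t+a)$, after which the recursion determines $p_\alpha(t)=\lambda^\alpha(t-\alpha a)$ for all $\alpha$. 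Comparing both sides of $[L_\alpha,L_{-\alpha}]$ for $|\alpha|\geq 2$ then forces $c_1(t)=0$. The second equation with $\beta=0$ and varying $\alpha$, combined with $q_0\in\C$ (deduced from the $I_0$-computation), shows $q_\alpha$ is constant in $t$; its precise value $q_\alpha=b\lambda^\alpha$ and the vanishing $c_2=0$ then follow by matching degrees. Finally $c_3=0$ drops out of the third equation once $q_\alpha$ is known. This collection of explicit forms exhibits $M\cong\Omega(\lambda,a,b)$.
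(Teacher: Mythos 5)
First, note that the paper offers no proof of this statement: it is imported wholesale from [\ref{CG}] (with the Virasoro part going back to [\ref{LZ}, \ref{TZ}]), so there is no in-paper argument to compare yours against. On its own merits, your parts (i) and (ii) are essentially complete and correct: $\phi$ commuting with $L_0$ forces $\phi$ to be multiplication by a nonzero scalar, after which $L_1$ and $I_1$ pin down $(\lambda,a,b)$; and for simplicity, the submodule $t\C[t]$ when $a=b=0$, the minimal-degree argument when $b\neq0$, and the reduction to Theorem \ref{lemma-123} when $b=0\neq a$ all work.

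The gap is in part (iii), which is the substance of the theorem. The reduction to twisted operators $L_\alpha f(t)=p_\alpha(t)f(t-\alpha)$, $I_\alpha f(t)=q_\alpha(t)f(t-\alpha)$, $C_j=c_j(t)\cdot$ and the resulting functional equations are fine, but the step ``the $(1,-1)$ equation together with the recursion shows by induction that every $p_\alpha$ has degree at most $1$'' is asserted, not proved, and it is exactly the technical core of [\ref{LZ}], [\ref{TZ}], [\ref{CG}]. The relation $p_1(t)p_{-1}(t-1)-p_{-1}(t)p_1(t+1)=-2t$ only yields $\deg p_1+\deg p_{-1}=2$; it does not exclude $(\deg p_1,\deg p_{-1})=(2,0)$ or $(0,2)$, so the base case of your induction is missing. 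One needs further relations --- e.g. $[L_2,L_{-1}]=-3L_1$, $[L_1,L_2]=L_3$, $[L_{-2},L_{-1}]=L_{-3}$, and $[L_\alpha,L_{-\alpha}]=-2\alpha L_0+\frac{\alpha^3-\alpha}{12}C_1$ for $|\alpha|\geq 2$ (the last of which is also what controls $c_1$) --- to force $\deg p_{\pm1}=1$ before any recursion can propagate the bound. Separately, the recursion coming from $[L_1,L_n]=(n-1)L_{n+1}$ should read $p_{n+1}(t)=\frac{1}{n-1}\bigl(p_1(t)p_n(t-1)-p_n(t)p_1(t-n)\bigr)$; your $p_1(t-2)$ is correct only for $n=2$. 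The remaining claims about $q_\alpha$ and $c_2,c_3$ are plausible once the $p_\alpha$ are known (and the constancy of $q_0$ does follow from $[L_\alpha,I_0]=0$ as you indicate), but as written the proposal does not yet constitute a proof of the classification.
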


%\begin{lemm}\label{lemma-456}\rm{(see [\ref{CG}, Theorem 2])}
%{\it Keep notations as above, then we have
%\begin{itemize}
  %\item [\rm(1)] $\Omega(\lambda,a,b)$ is a $\mathcal{L}$-module.
 % \item [\rm(1)] $\Omega(\lambda,a,b)$ is simple if and only if $\lambda\in \C^{*}$ with $a\neq0$ or $b\neq0$,
 % \item[\rm(2)] $\Omega(\lambda,a,b)\cong \Omega(\lambda',a',b')$ as a $\mathcal{L}$-module if and only if $\lambda=\lambda'$, $a=a'$ and $b=b'$,
  %\item [\rm(3)]Any $\mathcal{L}$-module that is free of rank 1 when restricted to $U(\C L_{0})$ is isomorphic to some $\Omega(\lambda,a,b)$ for $\lambda\in \C^{*}$ and $a,b\in\C$.
%\end{itemize}}
%\end{lemm}
\begin{theo}\label{lemma-3.4a} \rm{(see [\ref{YYX}])}. {\it Let $L=L_{\overline{0}}\oplus L_{\overline{1}}$ be a Lie superalgebra with a subalgebra $\eta\subseteq L_{\overline{0}}$, and $[L_{\overline{1}},L_{\overline{1}}]=L_{\overline{0}}$. Then there do not exist $L$-modules which are free of rank 1 as $U(\eta)$-modules.}
\end{theo}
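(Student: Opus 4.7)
The plan is to argue by contradiction: assume $M$ is an $L$-supermodule which is free of rank $1$ as a $U(\eta)$-module, with free generator $v$, and derive that $\eta$ must be zero.

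The crux is a parity-collapse observation. Since $\eta \subseteq L_{\bar 0}$ is purely even, the action of $U(\eta)$ on $M$ preserves the $\Z_2$-grading, so both $M_{\bar 0}$ and $M_{\bar 1}$ are $U(\eta)$-submodules. Transporting the decomposition $M = M_{\bar 0} \oplus M_{\bar 1}$ across the isomorphism $M \cong U(\eta)$ (regarding $U(\eta)$ as a left module over itself) produces a splitting of $U(\eta)$ into two left ideals, hence a decomposition $1 = e_0 + e_1$ into orthogonal idempotents. By the PBW theorem $U(\eta)$ is a Noetherian domain, whose only idempotents are $0$ and $1$; therefore exactly one of $M_{\bar 0}, M_{\bar 1}$ vanishes. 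Without loss of generality assume $M_{\bar 1} = 0$, so that $M$ is concentrated in a single parity.

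Once this is established, the rest is formal. The odd part must act by zero, because $L_{\bar 1} \cdot M = L_{\bar 1} \cdot M_{\bar 0} \subseteq M_{\bar 1} = 0$. Applying the super-bracket identity for $x, y \in L_{\bar 1}$ and $m \in M$,
\[
[x,y]\cdot m \;=\; x\cdot(y\cdot m) + y\cdot(x\cdot m) \;=\; 0,
\]
we conclude $[L_{\bar 1}, L_{\bar 1}]\cdot M = 0$. The structural hypothesis $[L_{\bar 1}, L_{\bar 1}] = L_{\bar 0}$ then gives $L_{\bar 0}\cdot M = 0$, so in particular $\eta \cdot M = 0$. Under the identification $M \cong U(\eta)$ with $v \leftrightarrow 1$, the relation $\eta \cdot v = 0$ translates into $\eta$ mapping to zero in $U(\eta)$, which by PBW forces $\eta = 0$, contradicting the implicit assumption that $\eta$ is a nonzero subalgebra.

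The only real obstacle is the parity-collapse step — namely, noticing that freeness of rank $1$ over a purely-even enveloping algebra is incompatible with a nontrivial bi-grading. Steps 2–4 are then essentially mechanical, using only the supermodule axioms and the hypothesis $[L_{\bar 1}, L_{\bar 1}] = L_{\bar 0}$.
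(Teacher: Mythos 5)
Your proposal is correct, and in fact the paper does not prove this statement at all --- it is imported verbatim from [\ref{YYX}] as a known result --- so there is no in-paper proof to compare against; your argument is essentially the standard one behind that citation. The chain ``$M$ concentrated in one parity $\Rightarrow$ $L_{\overline{1}}\cdot M=0$ $\Rightarrow$ $[L_{\overline{1}},L_{\overline{1}}]\cdot M=L_{\overline{0}}\cdot M=0$ $\Rightarrow$ $\eta$ annihilates the free generator, contradicting freeness'' is exactly right, and your idempotent argument for the parity collapse is actually more careful than the usual ``up to parity we may assume the generator is homogeneous,'' since it covers an inhomogeneous generator as well. Two small remarks. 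First, you only need that $U(\eta)$ is a domain (hence has no nontrivial idempotents, hence is indecomposable as a left module over itself); calling it Noetherian is unnecessary and is false when $\eta$ is infinite dimensional, as it is in the paper's application $\eta=\C L_{0,0}$ sitting inside $\mathcal{L}$. Second, you are right that $\eta\neq 0$ must be read into the statement: for $\eta=0$ the trivial one-dimensional module is free of rank $1$ over $U(0)=\C$ and the conclusion fails, so the hypothesis that $\eta$ is a nonzero subalgebra is genuinely needed and your final contradiction correctly lands there.
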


\section{Proof of Theorem \ref{theo1.5}}
%\textcolor[rgb]{1.00,0.00,0.00}{In this section, we construct and classify the free $U(\eta)$-modules $\Omega_{\R}(\lambda,a,b)$ of rank 1 over the Ramond-Block algebra $\R$. }
%We first briefly introduce the superspace $\Omega_{\R}(\lambda,a,b)$.
For $\lambda\in \C^{*}$ and $a,b\in \C$, we define $\Omega_{\R}(\lambda,a,b)=\C[t^{2}]\oplus t\C[t^{2}]$. Then $\Omega_{\R}(\lambda,a,b)$ is a $\Z_{2}$-graded vector space with $\Omega_{\R}(\lambda,a,b)_{\overline{0}}=\C[t^{2}]$ and $\Omega_{\R}(\lambda,a,b)_{\overline{1}}=t\C[t^{2}]$. The following definition gives a precise construction of a $\mathcal{R}$-module structure on $\Omega_{\R}(\lambda,a,b)$.
\begin{defi}\label{def3.1}
{\rm For $\lambda\in \C^{*}$, $a,b\in \C$, we define the action of $\R$ on $\Omega_{\R}(\lambda,a,b)$ as follows:
\begin{eqnarray}&\!\!\!\!\!\!\!\!\!\!\!\!\!\!\!\!\!\!\!\!\!\!\!\!&\label{1}
L_{m,i}f\left(t^{2}\right)=\lambda^{m}\Big(\delta_{i,0}\left(t^{2}-mqa\right)+\delta_{q,-1}\delta_{i,1}b\Big)f\left(t^{2}-mq\right),\\
&\!\!\!\!\!\!\!\!\!\!\!\!\!\!\!\!\!\!\!\!\!\!\!\!&\label{2}
L_{m,i}tf\big(t^{2}\big)=\lambda^{m}t\Big(\delta_{i,0}\Big(t^{2}-mqa-\frac{mq}{2}\Big)+\delta_{q,-1}\delta_{i,1}b\Big)f\left(t^{2}-mq\right),\\
&\!\!\!\!\!\!\!\!\!\!\!\!\!\!\!\!\!\!\!\!\!\!\!\!&\label{3}
G_{m,i}f\left(t^{2}\right)= \lambda^{m}\delta_{i,0}tf\left(t^{2}-mq\right),\\
&\!\!\!\!\!\!\!\!\!\!\!\!\!\!\!\!\!\!\!\!\!\!\!\!&\label{4}
G_{m,i}tf\left(t^{2}\right)=q\lambda^{m}\Big(\delta_{i,0}\left(t^{2}-2mqa\right)+2\delta_{q,-1}\delta_{i,1}b\Big)f\left(t^{2}-mq\right),
\end{eqnarray}
where $f\left(t^{2}\right)\in \C[t^{2}]$, $m\in \Z$ and $i\in \Z_{+}$.}
\end{defi}

Now we need to show that $\Omega_{\R}(\lambda,a,b)$ is a $\R$-module, namely, the actions of all $L_{m,i}$, $G_{n,j}$ on $\Omega_{\R}(\lambda,a,b)$ satisfy the relations in $\R$. %and then determine the simplicity and the isomorphism classes of it. Finally, we give a complete classification of free $U(\eta)$-modules of rank 1 over the $\R$ algebra.}
\vskip6pt

\noindent{\it Proof of Theorem $\ref{theo1.5}$} (1). %Firstly, we prove Theorem $\ref{theo1.5}$ (1).
Let $m,n\in \Z$ and $i,j\in \Z_{+}$. For convenience, we set $d_{i}=\delta_{q,-1}\delta_{i,1}b$. It follows from definitions ($\ref{1}$) and $(\ref{2})$ that
\begin{eqnarray*}&\!\!\!\!\!\!\!\!\!\!\!\!\!\!\!&
% \nonumber to remove numbering (before each equation)
%\ \ \ \ \ \ \ \ \ \ \ \ \
 L_{m,i}L_{n,j}f\left(t^{2}\right)=L_{m,i}\lambda^{n}\Big(\delta_{j,0}\left(t^{2}-nqa\right)+d_{j}\Big)f\left(t^{2}-nq\right)
 \nonumber\\\!\!\!\!\!\!\!\!\!\!\!\!&\!\!\!\!\!\!\!\!\!\!\!\!\!\!\!=&
%  \ \ \ \ \ \ \ \ \ \ \ \ \ \ \ \
\lambda^{m+n}\Big(\delta_{i,0}\left(t^{2}-mqa\right)+d_{i}\Big)\Big(\delta_{j,0}\left(t^{2}-nqa-mq\right)
+d_{j}\Big)f\left(t^{2}-(m+n)q\right),
%\end{eqnarray*}
%\begin{eqnarray*}\!\!\!\!\!\!\!\!\!\!\!\!&\!\!\!\!\!\!\!\!\!\!\!\!\!\!\!&
 \nonumber\\[6pt]\!\!\!\!\!\!\!\!\!\!\!\!&\!\!\!\!\!\!\!\!\!\!\!\!\!\!\!&
%\ \ \ \  \ \ %\ \ \ \ \ \ \ \  \ \ \ \ \ \ \ \ \ \
L_{m,i}L_{n,j}tf\left(t^{2}\right)=L_{m,i}\lambda^{n}t\Big(\delta_{j,0}\left(t^{2}-nqa-\frac{nq}{2}\right)+d_{j}\Big)f\left(t^{2}-nq\right)
 %\nonumber\\
%\!\!\!\!\!\!\!\!\!\!\!\!&\!\!\!\!\!\!\!\!\!\!\!\!\!\!\!&
% \nonumber to remove numbering (before each equation)
%\ \ \ \ \ \ \ \ \ \
%\ \
%=L_{m,i}\lambda^{n}t\Big(\delta_{j,0}\left(t^{2}-nqa-\frac{nq}{2}\right)+d_{j}\Big)f\left(t^{2}-nq\right)
 \nonumber\\\!\!\!\!\!\!\!\!\!\!\!\!&\!\!\!\!\!\!\!\!\!\!\!\!\!\!\!=&
%\ \ \ \  \ \ %\ \ \ \ \ \ \ \ \ \ \ \, \ \ \
\lambda^{m+n}t\Big(\delta_{i,0}\left(t^{2}{\sc\!}-{\sc\!}mqa
{\sc\!}-{\sc\!}\frac{mq}{2}\right)
{\sc\!}+{\sc\!}d_{i}\Big)\Big(\delta_{j,0}\left(t^{2}
{\sc\!}-{\sc\!}nqa{\sc\!}-{\sc\!}\frac{nq}{2}{\sc\!}-{\sc\!}mq\right)
{\sc\!}+{\sc\!}d_{j}\Big)f\left(t^{2}{\sc\!}-{\sc\!}(m{\sc\!}+{\sc\!}n)q\right).
\end{eqnarray*}
Then by a little lengthy but straightforward computation, we can get
\begin{eqnarray*}\!\!\!\!\!\!\!\!\!\!\!\!&\!\!\!\!\!\!\!\!\!\!\!\!\!\!\!&
% \nonumber to remove numbering (before each equation)
\ \ \ \ \ \ \ \ \ \ \ \ \ \ \ \ L_{m,i}L_{n,j}f\left(t^{2}\right)-L_{n,j}L_{m,i}f\left(t^{2}\right)
\nonumber\\\!\!\!\!\!\!\!\!\!\!\!\!&\!\!\!\!\!\!\!\!\!\!\!\!\!\!\!&
 \ \ \ \ \ \ \ \
%=\lambda^{m+n}\Big(\delta_{i,0}\left(t^{2}-mqa\right)+d_{i}\Big)\Big(\delta_{j,0}\left(t^{2}-naq-mq\right)
%+d_{j}\Big)f\left(t^{2}-(m+n)q\right)
%\nonumber\\\!\!\!\!\!\!\!\!\!\!\!\!&\!\!\!\!\!\!\!\!\!\!\!\!\!\!\!&
% \ \ \ \ \ \ \ \ \ \ \
 %-\lambda^{m+n}\Big(\delta_{j,0}\left(t^{2}-nqa\right)+d_{j}\Big)\Big(\delta_{i,0}\left(t^{2}-maq-nq\right)
%+d_{i}\Big)f\left(t^{2}-(m+n)q\right)
\nonumber\\\!\!\!\!\!\!\!\!\!\!\!\!&\!\!\!\!\!\!\!\!\!\!\!\!\!\!\!&
 \ \ \ \ \ \ \ \ \ \
 \ \ \, =\lambda^{m+n}\Big(\delta_{i+j,0}\big(nq-mq\big)\left(t^{2}-(m+n)qa\right)
 +\delta_{i,0}d_{j}nq-\delta_{j,0}d_{i}mq\Big)f\big(t^{2}-(m+n)q\big)
 \nonumber\\\!\!\!\!\!\!\!\!\!\!\!\!&\!\!\!\!\!\!\!\!\!\!\!\!\!\!\!&
 \ \ \ \ \ \ \ \ \ \
 \ \ \, =\big(n(i+q)-m(j+q)\big)\lambda^{m+n}\Big(\delta_{i+j,0}\big(t^{2}-(m+n)qa\big)+d_{i+j}\Big)f\big(t^{2}-(m+n)q\big)
 \nonumber\\\!\!\!\!\!\!\!\!\!\!\!\!&\!\!\!\!\!\!\!\!\!\!\!\!\!\!\!&
 \ \ \ \ \ \ \ \ \ \
\ \ \, =\big(n(i+q)-m(j+q)\big)L_{m+n,i+j}f\big(t^{2}\big)
% \nonumber\\\!\!\!\!\!\!\!\!\!\!\!\!&\!\!\!\!\!\!\!\!\!\!\!\!\!\!\!&
% \ \ \ \ \ \ \ \  \ \
%\ \ \,
=[L_{m,i},L_{n,j}]f\big(t^{2}\big),
\end{eqnarray*}
where the third equality is obtained by definition \eqref{1}, and the last equality follows from relation \eqref{J_a_b_}.
Similarly,
%According to $(\ref{2})$, we obtain
%\begin{eqnarray*}\!\!\!\!\!\!\!\!\!\!\!\!&\!\!\!\!\!\!\!\!\!\!\!\!\!\!\!&
%\ \ \ \ \ \ \ \ \ \
% \ \ \ \ \ \, L_{m,i}L_{n,j}tf\left(t^{2}\right)
% \nonumber\\
%\!\!\!\!\!\!\!\!\!\!\!\!&\!\!\!\!\!\!\!\!\!\!\!\!\!\!\!&
% \nonumber to remove numbering (before each equation)
%\ \ \ \ \ \ \ \ \ \
%\ \ =L_{m,i}\lambda^{n}t\Big(\delta_{j,0}\left(t^{2}-nqa-\frac{nq}{2}\right)+d_{j}\Big)f\left(t^{2}-nq\right)
% \nonumber\\\!\!\!\!\!\!\!\!\!\!\!\!&\!\!\!\!\!\!\!\!\!\!\!\!\!\!\!&
 %\ \ \ \ \ \ \ \ \ \
%\ \ =\lambda^{m+n}t\Big(\delta_{i,0}\left(t^{2}-mqa-\frac{mq}{2}\right)+d_{i}\Big)\Big(\delta_{j,0}\left(t^{2}-nqa-\frac{nq}{2}-mq\right)
%+d_{j}\Big)f\left(t^{2}-(m+n)q\right),
%\end{eqnarray*}
%this implies that
\begin{eqnarray*}\!\!\!\!\!\!\!\!\!\!\!\!\!\!\!\!\!\!\!\!\!\!\!\!
&\!\!\!\!\!\!\!\!\!\!\!\!\!\!\!&
% \nonumber to remove numbering (before each equation)
\ \ \ \ \ \ \ \ \ \ \ \ \ \
 \ \ \ \ \ \,
 L_{m,i}L_{n,j}tf\left(t^{2}\right)-L_{n,j}L_{m,i}tf\left(t^{2}\right)
 %\nonumber\\\!\!\!\!\!\!\!\!\!\!\!\!\!\!\!\!\!\!\!\!\!\!\!\!&\!\!\!\!\!\!\!\!\!\!\!\!\!\!\!&
% \ \ \ \ \ \ \ \ \ \ \ \ \ \
%=\lambda^{m+n}t\Big(\delta_{i,0}\left(t^{2}-mqa-\frac{m}{2}q\right)+d_{i}\Big)\Big(\delta_{j,0}\left(t^{2}-nqa-\frac{n}{2}q-mq\right)
%+d_{j}\Big)f\left(t^{2}-(m+n)q\right)
%\nonumber\\\!\!\!\!\!\!\!\!\!\!\!\!&\!\!\!\!\!\!\!\!\!\!\!\!\!\!\!&
 %\ \ \ \ \ \ \ \ \ \ \ \ \
% -\lambda^{m+n}t\Big(\delta_{j,0}\left(t^{2}-nqa-\frac{n}{2}q\right)+d_{j}\Big)\Big(\delta_{i,0}\left(t^{2}-mqa-\frac{m}{2}q-nq\right)
%+d_{i}\Big)f\left(t^{2}-(m+n)q\right)
\nonumber\\\!\!\!\!\!\!\!\!\!\!\!\!\!\!\!\!\!\!\!\!\!\!\!\!&\!\!\!\!\!\!\!\!\!\!\!\!\!\!\!&
 \ \ \ \ \ \ \ \ \ \ \ \ \ \ \ \
=\lambda^{m+n}t\left(\delta_{i+j,0}\big(nq{\sc\!}-{\sc\!}
mq\big)\Big(t^{2}{\sc\!}-{\sc\!}\big(m{\sc\!}+{\sc\!}n\big)\Big(a
{\sc\!}+{\sc\!}\frac{1}{2}\Big)q\Big)
{\sc\!} +{\sc\!}\delta_{i,0}d_{j}nq
{\sc\!}-{\sc\!}\delta_{j,0}d_{i}mq\right)f\big(t^{2}{\sc\!}-{\sc\!}(m{\sc\!}+{\sc\!}n)q\big)
 \nonumber\\\!\!\!\!\!\!\!\!\!\!\!\!\!\!\!\!\!\!\!\!\!\!\!\!&\!\!\!\!\!\!\!\!\!\!\!\!\!\!\!&
 \ \ \ \ \ \ \ \ \ \ \ \ \ \ \ \
 =\big(n(i+q)-m(j+q)\big)\lambda^{m+n}t\Big(\delta_{i+j,0}\Big(t^{2}-\big(m+n\big)\Big(a+\frac{1}{2}\Big)q\Big)+d_{i+j}\Big)f\big(t^{2}-(m+n)q\big)
 \nonumber\\\!\!\!\!\!\!\!\!\!\!\!\!\!\!\!\!\!\!\!\!\!\!\!\!&\!\!\!\!\!\!\!\!\!\!\!\!\!\!\!&
  \ \ \ \ \ \ \ \ \ \ \ \ \ \ \ \
 =\big(n(i+q)-m(j+q)\big)L_{m+n,i+j}tf\big(t^{2}\big)
% \nonumber\\\!\!\!\!\!\!\!\!\!\!\!\!\!\!\!\!\!\!\!\!\!\!\!\!&\!\!\!\!\!\!\!\!\!\!\!\!\!\!\!&
% \ \ \ \ \ \ \ \ \ \ \ \ \ \ \ \
 =[L_{m,i},L_{n,j}]tf\big(t^{2}\big),
\end{eqnarray*}
where the third equality is obtained by definition \eqref{2}, and the last equality follows again from relation \eqref{J_a_b_}.
%Next we want to show that $[L_{m,i},G_{n,j}]f\big(t^{2}\big)=L_{m,i}G_{n,j}f\left(t^{2}\right)-G_{n,j}L_{m,i}f\left(t^{2}\right)$ and $[L_{m,i},G_{n,j}]tf\big(t^{2}\big)=L_{m,i}G_{n,j}tf\left(t^{2}\right)-G_{n,j}L_{m,i}tf\left(t^{2}\right)$.
Moreover, using definitions $(\ref{1})$--$(\ref{3})$, we can then obtain the following
\begin{eqnarray*}\!\!\!\!\!\!\!\!\!\!\!\!\!\!\!\!\!\!\!\!\!\!\!\!&\!\!\!\!\!\!\!\!\!\!\!\!\!\!\!&
% \nonumber to remove numbering (before each equation)
\ \ \ \ \ \ \ \ \ \ \ \ \ \ \ \ \ \ \ \, L_{m,i}G_{n,j}f\left(t^{2}\right)-G_{n,j}L_{m,i}f\left(t^{2}\right)
\nonumber\\\!\!\!\!\!\!\!\!\!\!\!\!\!\!\!\!\!\!\!\!\!\!\!\!\!\!\!\!&\!\!\!\!\!\!\!\!\!\!\!\!\!\!\!&
 \ \ \ \ \ \ \ \ \ \ \ \ \ \ \ \
=L_{m,i}\lambda^{n}\delta_{j,0}tf\left(t^{2}-nq\right)-G_{n,j}\lambda^{m}\Big(\delta_{i,0}\left(t^{2}-mqa\right)+d_{i}\Big)f\left(t^{2}-mq\right)
 \nonumber\\\!\!\!\!\!\!\!\!\!\!\!\!\!\!\!\!\!\!\!\!\!\!\!\!\!\!\!\!&\!\!\!\!\!\!\!\!\!\!\!\!\!\!\!&
 \ \ \ \ \ \ \ \ \ \ \ \ \ \ \ \
=\lambda^{m+n}\delta_{j,0}t\Big(\delta_{i,0}\left(t^{2}-mqa-\frac{mq}{2}\right)+d_{i}\Big)f\left(t^{2}-(m+n)q\right)
\nonumber\\\!\!\!\!\!\!\!\!\!\!\!\!\!\!\!\!\!\!\!\!\!\!\!\!\!\!\!\!&\!\!\!\!\!\!\!\!\!\!\!\!\!\!\!&
 \ \ \ \ \ \ \ \ \ \ \ \,\ \ \ \  \ \ \ \
-\lambda^{m+n}\delta_{j,0}t\Big(\delta_{i,0}\left(t^{2}-mqa-nq\right)+d_{i}\Big)f\left(t^{2}-(m+n)q\right)
\nonumber\\\!\!\!\!\!\!\!\!\!\!\!\!\!\!\!\!\!\!\!\!\!\!\!\!\!\!\!\!&\!\!\!\!\!\!\!\!\!\!\!\!\!\!\!&
 \ \ \ \ \ \ \ \ \ \ \ \ \ \ \ \
=\lambda^{m+n}\delta_{i,0}\delta_{j,0}\Big(nq-\frac{mq}{2}\Big)tf\big(t^{2}-(m+n)q\big)
 \nonumber\\\!\!\!\!\!\!\!\!\!\!\!\!\!\!\!\!\!\!\!\!\!\!\!\!\!\!\!\!&\!\!\!\!\!\!\!\!\!\!\!\!\!\!\!&
 \ \ \ \ \ \ \ \ \ \ \ \ \ \ \ \
=\Big(n\big(i+q\big)-m\Big(j+\frac{q}{2}\Big)\Big)\lambda^{m+n}\delta_{i+j,0}tf\big(t^{2}-(m+n)q\big)
 \nonumber\\\!\!\!\!\!\!\!\!\!\!\!\!\!\!\!\!\!\!\!\!\!\!\!\!\!\!\!\!&\!\!\!\!\!\!\!\!\!\!\!\!\!\!\!&
 \ \ \ \ \ \ \ \ \ \ \ \ \ \ \ \
=\Big(n\big(i+q\big)-m\Big(j+\frac{q}{2}\Big)\Big)G_{m+n,i+j}f\big(t^{2}\big)
% \nonumber\\\!\!\!\!\!\!\!\!\!\!\!\!&\!\!\!\!\!\!\!\!\!\!\!\!\!\!\!&
% \ \ \ \ \ \ \ \ \ \ \ \ \ \ \ \
=[L_{m,i},G_{n,j}]f\big(t^{2}\big),
\end{eqnarray*}
where the last equality is yielded by relation $(\ref{J_a_b c_})$. Taking into account relation $(\ref{J_a_b c_})$ again, and according to definitions $(\ref{1})$, $(\ref{2})$ and $(\ref{4})$, we have
\begin{eqnarray*}\!\!\!\!\!\!\!\!\!\!\!\!\!\!\!\!\!\!\!\!\!\!\!\!&\!\!\!\!\!\!\!\!\!\!\!\!\!\!\!&
% \nonumber to remove numbering (before each equation)
\ \ \ \ \ \ \ \ \ \ \ \ \ \ \ \ \ \,L_{m,i}G_{n,j}tf\left(t^{2}\right)-G_{n,j}L_{m,i}tf\left(t^{2}\right)
\nonumber\\\!\!\!\!\!\!\!\!\!\!\!\!\!\!\!\!\!\!\!\!\!\!\!\!&\!\!\!\!\!\!\!\!\!\!\!\!\!\!\!&
 \ \ \ \ \ \ \ \ \ \ \ \ \  \
 =L_{m,i}q\lambda^{n}\Big(\delta_{j,0}\left(t^{2}-2nqa\right)+2d_{j}\Big)f\left(t^{2}-nq\right)
 \nonumber\\\!\!\!\!\!\!\!\!\!\!\!\!\!\!\!\!\!\!\!\!\!\!\!\!&\!\!\!\!\!\!\!\!\!\!\!\!\!\!\!&
 \ \ \ \ \ \ \ \ \ \ \ \ \ \ \ \  \
 -G_{n,j}\lambda^{m}t\Big(\delta_{i,0}\left(t^{2}-mqa-\frac{mq}{2}\right)+d_{i}\Big)f\left(t^{2}-mq\right)
 \nonumber\\\!\!\!\!\!\!\!\!\!\!\!\!\!\!\!\!\!\!\!\!\!\!\!\!&\!\!\!\!\!\!\!\!\!\!\!\!\!\!\!&
 \ \ \ \ \ \ \ \ \ \ \ \ \  \
=q\lambda^{m+n}\Big(\delta_{i,0}\left(t^{2}-mqa\right)+d_{i}\Big)\Big(\delta_{j,0}\left(t^{2}-2nqa-mq\right)+2d_{j}\Big)f\left(t^{2}-(m+n)q\right)
\nonumber\\\!\!\!\!\!\!\!\!\!\!\!\!\!\!\!\!\!\!\!\!\!\!\!\!&\!\!\!\!\!\!\!\!\!\!\!\!\!\!\!&
 \ \ \ \ \ \ \ \ \ \ \ \ \ \ \ \  \
 -q\lambda^{m+n}\Big(\delta_{j,0}\big(t^{2}-2nqa\big)+2d_{j}\Big)\Big(\delta_{i,0}\left(t^{2}-mqa-\frac{mq}{2}-nq\right)+d_{i}\Big)f\left(t^{2}-(m+n)q\right)
\nonumber\\\!\!\!\!\!\!\!\!\!\!\!\!\!\!\!\!\!\!\!\!\!\!\!\!&\!\!\!\!\!\!\!\!\!\!\!\!\!\!\!&
 \ \ \ \ \ \ \ \ \ \ \ \ \  \
 =q\lambda^{m+n}\Big(\delta_{i+j,0}\Big(nq{\sc\!}-{\sc\!}
 \frac{mq}{2}\Big)\big(t^{2}{\sc\!}-{\sc\!}2(m{\sc\!}+{\sc\!}
 n)qa\big){\sc\!}+{\sc\!}\big(m{\sc\!}+{\sc\!}2n\big)q\delta_{i,0}d_{j}
 {\sc\!}-{\sc\!}mq\delta_{j,0}d_{i}\Big)f\big(t^{2}{\sc\!}-{\sc\!}(m{\sc\!}+{\sc\!}n)q\big)
 \nonumber\\\!\!\!\!\!\!\!\!\!\!\!\!\!\!\!\!\!\!\!\!\!\!\!\!&\!\!\!\!\!\!\!\!\!\!\!\!\!\!\!&
 \ \ \ \ \ \ \ \ \ \ \ \ \  \
 =\Big(n\big(i+q\big)-m\Big(j+\frac{q}{2}\Big)\Big)q\lambda^{m+n}\Big(\delta_{i+j,0}\left(t^{2}-2(m+n)qa\right)+2d_{i+j}\Big)f\big(t^{2}-(m+n)q\big)
 \nonumber\\\!\!\!\!\!\!\!\!\!\!\!\!\!\!\!\!\!\!\!\!\!\!\!\!&\!\!\!\!\!\!\!\!\!\!\!\!\!\!\!&
 \ \ \ \ \ \ \ \ \ \ \ \ \  \
 =\Big(n\big(i+q\big)-m\Big(j+\frac{q}{2}\Big)\Big)G_{m+n,i+j}tf\big(t^{2}\big)
% \nonumber\\\!\!\!\!\!\!\!\!\!\!\!\!&\!\!\!\!\!\!\!\!\!\!\!\!\!\!\!&
% \ \ \ \ \ \ \ \ \ \ \ \ \  \
 =[L_{m,i},G_{n,j}]tf\big(t^{2}\big).
\end{eqnarray*}
%\textcolor[rgb]{1.00,0.00,0.00}{where the last equality follows again from relation \eqref{J_a_b c_}.}
Furthermore, with definitions $(\ref{3})$ and $(\ref{4})$, we get
\begin{eqnarray*}\!\!\!\!\!\!\!\!\!\!\!\!&\!\!\!\!\!\!\!\!\!\!\!\!\!\!\!&
% \nonumber to remove numbering (before each equation)
\ \ \ \ \ \ \ \ \ \ \ \ \ \ \ \ \ \ \,G_{m,i}G_{n,j}f\left(t^{2}\right)+G_{n,j}G_{m,i}f\left(t^{2}\right)
\nonumber\\\!\!\!\!\!\!\!\!\!\!\!\!&\!\!\!\!\!\!\!\!\!\!\!\!\!\!\!&
 \ \ \ \ \ \ \ \ \ \ \ \ \ \ \
 =G_{m,i}\lambda^{n}\delta_{j,0}tf\left(t^{2}-nq\right)+G_{n,j}\lambda^{m}\delta_{i,0}tf\left(t^{2}-mq\right)
 \nonumber\\\!\!\!\!\!\!\!\!\!\!\!\!&\!\!\!\!\!\!\!\!\!\!\!\!\!\!\!&
\ \ \ \ \ \ \ \ \ \ \ \ \ \ \
=q\lambda^{m+n}\delta_{j,0}\Big(\delta_{i,0}\left(t^{2}-2mqa\right)+2d_{i}\Big)f\left(t^{2}-(m+n)q\right)
\nonumber\\\!\!\!\!\!\!\!\!\!\!\!\!&\!\!\!\!\!\!\!\!\!\!\!\!\!\!\!&
 \ \ \ \ \ \ \ \ \ \ \ \ \ \ \ \ \ \
 +q\lambda^{m+n}\delta_{i,0}\Big(\delta_{j,0}\left(t^{2}-2nqa\right)+2d_{j}\Big)f\left(t^{2}-(m+n)q\right)
\nonumber\\\!\!\!\!\!\!\!\!\!\!\!\!&\!\!\!\!\!\!\!\!\!\!\!\!\!\!\!&
 \ \ \ \ \ \ \ \ \ \ \ \ \ \ \
 =2q\lambda^{m+n}\Big(\delta_{i,0}\delta_{j,0}\big(t^{2}-(m+n)qa\big)+\delta_{j,0}d_{i}+\delta_{i,0}d_{j}\Big)f\big(t^{2}-(m+n)q\big)
 \nonumber\\\!\!\!\!\!\!\!\!\!\!\!\!&\!\!\!\!\!\!\!\!\!\!\!\!\!\!\!&
 \ \ \ \ \ \ \ \ \ \ \ \ \ \ \
 =2q\lambda^{m+n}\Big(\delta_{i+j,0}\big(t^{2}-(m+n)qa\big)+d_{i+j}\Big)f\big(t^{2}-(m+n)q\big)
% \nonumber\\\!\!\!\!\!\!\!\!\!\!\!\!&\!\!\!\!\!\!\!\!\!\!\!\!\!\!\!&
% \ \ \ \ \ \ \ \ \ \ \ \ \ \ \
 =2qL_{m+n,i+j}f\big(t^{2}\big),
\end{eqnarray*}
where the last equality is deduced from definition \eqref{1}, %$\delta_{j,0}d_{i}+\delta_{i,0}d_{j}=\delta_{j,0}\delta_{q,-1}\delta_{i,1}b+\delta_{i,0}\delta_{q,-1}\delta_{i,1}b=\delta_{i+j,1}\delta_{q,-1}b=d_{i+j}$
and %Due to $(\ref{2})$--$(\ref{4})$, it follows that
\begin{eqnarray*}\!\!\!\!\!\!\!\!\!\!\!\!&\!\!\!\!\!\!\!\!\!\!\!\!\!\!\!&
% \nonumber to remove numbering (before each equation)
\ \ \ \ \ \ \ \ \,\ \ \ \ \ \ \ \ \ \,G_{m,i}G_{n,j}tf\left(t^{2}\right)+G_{n,j}G_{m,i}tf\left(t^{2}\right)
\nonumber\\\!\!\!\!\!\!\!\!\!\!\!\!&\!\!\!\!\!\!\!\!\!\!\!\!\!\!\!&
 \ \ \ \ \ \ \ \ \ \ \ \ \ \ \
 =G_{m,i}q\lambda^{n}\left(\delta_{j,0}\big(t^{2}-2nqa\big)+2d_{j}\right)f\left(t^{2}-nq\right)
 \nonumber\\\!\!\!\!\!\!\!\!\!\!\!\!&\!\!\!\!\!\!\!\!\!\!\!\!\!\!\!&
 \ \ \ \ \ \ \ \ \ \ \ \ \ \ \ \ \ \
 +G_{n,j}q\lambda^{m}\left(\delta_{i,0}\big(t^{2}-2mqa\big)+2d_{i}\right)f\left(t^{2}-mq\right)
 \nonumber\\\!\!\!\!\!\!\!\!\!\!\!\!&\!\!\!\!\!\!\!\!\!\!\!\!\!\!\!&
 \ \ \ \ \ \ \ \ \ \ \ \ \ \ \
=q\lambda^{m+n}\delta_{i,0}t\Big(\delta_{j,0}\left(t^{2}-2nqa-mq\right)+2d_{j}\Big)f\left(t^{2}-(m+n)q\right)
\nonumber\\\!\!\!\!\!\!\!\!\!\!\!\!&\!\!\!\!\!\!\!\!\!\!\!\!\!\!\!&
 \ \ \ \ \ \ \ \ \ \ \ \ \ \ \ \ \ \
 +q\lambda^{m+n}\delta_{j,0}t\Big(\delta_{i,0}\left(t^{2}-2mqa-nq\right)+2d_{i}\Big)f\left(t^{2}-(m+n)q\right)
\nonumber\\\!\!\!\!\!\!\!\!\!\!\!\!&\!\!\!\!\!\!\!\!\!\!\!\!\!\!\!&
 \ \ \ \ \ \ \ \ \ \ \ \ \ \ \
 =2q\lambda^{m+n}t\Big(\delta_{i,0}\delta_{j,0}\Big(t^{2}-\big(m+n\big)\Big(a+\frac{1}{2}\Big)q\Big)+\delta_{j,0}d_{i}+\delta_{i,0}d_{j}\Big)f\big(t^{2}-(m+n)q\big)
 \nonumber\\\!\!\!\!\!\!\!\!\!\!\!\!&\!\!\!\!\!\!\!\!\!\!\!\!\!\!\!&
  \ \ \ \ \ \ \ \ \ \ \ \ \ \ \
 =2q\lambda^{m+n}t\Big(\delta_{i+j,0}\Big(t^{2}-\big(m+n\big)\Big(a+\frac{1}{2}\Big)q\Big)+d_{i+j}\Big)f\big(t^{2}-(m+n)q\big)
 \nonumber\\\!\!\!\!\!\!\!\!\!\!\!\!&\!\!\!\!\!\!\!\!\!\!\!\!\!\!\!&
  \ \ \ \ \ \ \ \ \ \ \ \ \ \ \
 =2qL_{m+n,i+j}tf\big(t^{2}\big),
\end{eqnarray*}
where the last equality is implied by definition \eqref{2}.
Therefore $\Omega_{\R}(\lambda,a,b)$ is a $\R$-module and we complete the proof of Theorem $\ref{theo1.5}\,(1)$.
%\end{proof}
\hfill$\Box$\vskip6pt

Next we want to present the proofs of Theorem $\ref{theo1.5}$ (2) and (3), which provide the sufficient and necessary conditions for the $\R$-module $\Omega_{\R}(\lambda,a,b)$ to be simple, and the isomorphism class of it respectively.
\vskip6pt

\noindent{\it Proof of Theorem $\ref{theo1.5}$} (2) and (3).
Let $N=N_{\overline{0}}\oplus N_{\overline{1}}$ be a nonzero submodule of $\Omega_{\R}(\lambda,a,b)$ for $\lambda\in \C^{*}$ and $a,b\in\C$. Then by $(\ref{4})$, we know that $N_{\overline{0}}\neq 0$. Since $N_{\overline{0}}$ is a submodule of $\Omega_{\R}(\lambda,a,b)_{\overline{0}}$ as $\R_{\overline{0}}$-module, then, it can be regarded as $\W$-module and $\H$-module for $q\neq-1$ and $q=-1$ respectively. Hence, with helps of Theorems $\ref{lemma-123}$ and $\ref{lemma-456}$, we obtain that $N_{\overline{0}}=\Omega_{\R}(\lambda,a,b)_{\overline{0}}$ if and only if $a\neq0$ or $b\neq0$. Furthermore, by $(\ref{3})$, we get $N_{\overline{1}}=\Omega_{\R}(\lambda,a,b)_{\overline{1}}$. Consequently, $N=\Omega_{\R}(\lambda,a,b)$,  %If $q\neq-1$, by Lemma $\ref{lemma-123}$, we get $N_{\overline{0}}=\big(\Omega(\lambda,a,b)_{\R}\big)_{\overline{0}}$ for $\lambda,a\in \C^{*}$. If $q=-1$, by Lemma $\ref{lemma-456}$, we get $N_{\overline{0}}=\big(\Omega(\lambda,a,b)_{\R}\big)_{\overline{0}}$ for $\lambda\in \C^{*}$ and $a\neq0$ or $b\neq0$.
and Theorem $\ref{theo1.5}\,(2)$ holds.

%Now we prove Theorem $\ref{theo1.5}\,(3)$.
% and prove the necessity is sufficient.
 Suppose $\Omega_{\R}(\lambda,a,b)\cong\Omega_{\R}(\mu,a',b')$ as $\R$-modules. Then, $\Omega_{\R}(\lambda,a,b)_{\overline{0}}\cong\Omega_{\R}(\mu,a',b')_{\overline{0}}$ as $\R_{\overline{0}}$-modules. If $q\neq-1$, by Theorem $\ref{lemma-123}$, we have $\lambda=\mu$ and $a=a'$. If $q=-1$, by Theorem $\ref{lemma-456}$, we have $\lambda=\mu$, $a=a'$ and $b=b'$. Thus, the proof of Theorem $\ref{theo1.5}\,(3)$ is completed.
\hfill$\Box$\vskip6pt

Before proving Theorem $\ref{theo1.5}\,(4)$, we further need the following preliminary results for later use.

Let $M=M_{\overline{0}}\oplus M_{\overline{1}}$ be a $\R$-module such that it is free of rank 1 as a $U(\eta)$-module, where $\eta=\C L_{0,0} \oplus \C G_{0,0}$. Using the
relations $L_{0,0}G_{0,0}=G_{0,0}L_{0,0}$ and $G_{0,0}^{2}=qL_{0,0}$, we obtain that $U(\eta)=\C [L_{0,0}]\oplus G_{0,0}\C [L_{0,0}]$. Take a homogeneous basis element $1\in M$, without loss of generality, up to parity, we may assume $1\in M_{\overline{0}}$, and
\begin{equation}\label{M}
  M=U(\eta)1=\C [L_{0,0}]1\oplus G_{0,0}\C[L_{0,0}]1,
\end{equation}
with $M_{\overline{0}}=\C [L_{0,0}]1$ and $M_{\overline{1}}=G_{0,0}\C [L_{0,0}]1$. Hence, we can suppose that $L_{0,0}1=t^{2}1$ and $G_{0,0}1=t1$.

%Since $SVir[0]_{\overline{0}}$ is isomorphic to the Witt algebra $\W$, thus we can regard $M_{\overline{0}}$ as a $\W$-module which is free of rank 1 as a $\C [L_{0,0}]$-module. Then from $(\ref{ab6})$, it follows that for any $m\in\Z$, $f(t^{2})\in \C[t^{2}]$, there exist $\lambda\in \C^{*}$ and $a\in \C$ such that
%\begin{equation*}
  %L_{m,0}f(t^{2})1=\lambda^{m}(t^{2}-mqa)f(t^{2}-mq)1.
%\end{equation*}

\begin{lemm}For any $m\in\Z$, $i\in \Z_{+}$ and $r\in \N$, we have
\begin{eqnarray}&\!\!\!\!\!\!\!\!\!\!\!\!\!\!\!\!\!\!\!\!\!\!\!\!&\label{i1}
L_{m,i}(L_{0,0})^{r} =\big(L_{0,0}-mq\big)^{r}L_{m,i},\\
&\!\!\!\!\!\!\!\!\!\!\!\!\!\!\!\!\!\!\!\!\!\!\!\!&\label{i2}
G_{m,i}(L_{0,0})^{r} =\big(L_{0,0}-mq\big)^{r}G_{m,i}.
\end{eqnarray}
\end{lemm}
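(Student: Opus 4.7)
The plan is to prove both identities by induction on $r\in\N$, reducing everything to two single-commutator computations coming from the defining relations \eqref{J_a_b_} and \eqref{J_a_b c_} of $S(q)$.

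For the base case $r=1$, I would compute
\begin{equation*}
[L_{m,i},L_{0,0}]=\big(0\cdot(i+q)-m(0+q)\big)L_{m,i}=-mq\,L_{m,i},
\end{equation*}
using \eqref{J_a_b_} with $(n,j)=(0,0)$, hence $L_{m,i}L_{0,0}=(L_{0,0}-mq)L_{m,i}$, which is \eqref{i1} for $r=1$. Similarly, using \eqref{J_a_b c_} with $(l,j)$ replaced by $(m,i)$ and the first index pair $(m,i)$ replaced by $(0,0)$, I get
\begin{equation*}
[L_{0,0},G_{m,i}]=\Big(m(0+q)-0\cdot\big(i+\tfrac{q}{2}\big)\Big)G_{m,i}=mq\,G_{m,i},
\end{equation*}
so $G_{m,i}L_{0,0}=(L_{0,0}-mq)G_{m,i}$, which gives \eqref{i2} for $r=1$.

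For the inductive step, assuming \eqref{i1} for some $r\ge 1$, I would write
\begin{equation*}
L_{m,i}(L_{0,0})^{r+1}=\big(L_{m,i}(L_{0,0})^{r}\big)L_{0,0}=(L_{0,0}-mq)^{r}L_{m,i}L_{0,0}=(L_{0,0}-mq)^{r+1}L_{m,i},
\end{equation*}
where the last equality uses the $r=1$ case already established. The induction for \eqref{i2} is identical, replacing $L_{m,i}$ by $G_{m,i}$ at each step.

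There is no real obstacle here: both identities are instances of the elementary fact that if $[X,Y]=cY$ for some scalar $c$, then $YX^{r}=(X-c)^{r}Y$ for all $r\in\N$; the content of the lemma is simply recording the specific scalar $c=-mq$ (for $L_{m,i}$) and $c=mq$ (for $G_{m,i}$) produced by the defining brackets. I would present the proof as two lines of induction, one for each identity, and move on.
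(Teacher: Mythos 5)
Your proof is correct and follows essentially the same route as the paper: verify the $r=1$ case directly from the defining brackets \eqref{J_a_b_} and \eqref{J_a_b c_}, then induct on $r$ (the paper peels $L_{0,0}$ off on the left, you on the right --- immaterial since $L_{0,0}$ commutes with its own powers). The only blemish is in your closing aside: with the convention $[X,Y]=cY$ and $X=L_{0,0}$, the scalar is $c=mq$ in \emph{both} cases, not $c=-mq$ for $L_{m,i}$ (you have silently switched the order of the bracket between the two cases); this does not affect the argument itself.
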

\begin{proof} According to relation $(\ref{J_a_b_})$, we get
\begin{equation*}
  L_{m,i}L_{0,0}=[L_{m,i},L_{0,0}]+L_{0,0}L_{m,i}=(L_{0,0}-mq)L_{m,i},
\end{equation*}
thus $(\ref{i1})$ is true for $r=1$. Suppose  $(\ref{i1})$ holds for $r$, then we obtain
\begin{equation*}
  L_{m,i}(L_{0,0})^{r+1}=L_{m,i}L_{0,0}(L_{0,0})^{r}=(L_{0,0}-mq)L_{m,i}(L_{0,0})^{r}=\big(L_{0,0}-mq\big)^{r+1}L_{m,i}.
\end{equation*}
Therefore, $(\ref{i1})$ holds for any $r\in \N$.

Similarly, we can get $(\ref{i2})$ by induction on $r$.
\end{proof}

By (\ref{i1}) and a similar proof of [\ref{CY}, Theorem 3.1], we have the following Lemma.% \ref{lemmab1}.
\begin{lemm}\label{lemmab1}For any $m\in\Z$, $i\in \Z_{+}$ and $f(t^{2})\in \C[t^{2}]$, we obtain
\begin{equation}\label{i4}
   L_{m,i}f(t^{2})1=\lambda^{m}\big(\delta_{i,0}\big(t^{2}-mqa\big)+\delta_{q,-1}\delta_{i,1}b\big)f(t^{2}-mq)1.
\end{equation}
%for any $f(t^{2})\in \C[t^{2}]$, where $d_{i}=\delta_{q,-1}\delta_{i,1}b$.
\end{lemm}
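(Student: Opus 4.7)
The plan is to reduce the computation of $L_{m,i}f(t^2)\cdot 1$ to that of $L_{m,i}\cdot 1$, and then pin down the latter by restricting $M_{\overline 0}$ to familiar subalgebras together with bracket manipulations inside $\R$.

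First, iterating \eqref{i1} and extending $\C$-linearly yields, for every polynomial $f\in\C[x]$,
\begin{equation*}
L_{m,i} f(L_{0,0})\cdot 1 = f(L_{0,0}-mq)\,L_{m,i}\cdot 1.
\end{equation*}
Since $L_{0,0}^r\cdot 1=t^{2r}\cdot 1$, this rewrites as $L_{m,i}f(t^2)\cdot 1 = f(t^2-mq)\,(L_{m,i}\cdot 1)$. Because $L_{m,i}$ is even and $1\in M_{\overline 0}$, we have $L_{m,i}\cdot 1\in M_{\overline 0}=\C[L_{0,0}]\cdot 1$, so we may write $L_{m,i}\cdot 1 = h_{m,i}(t^2)\cdot 1$ for a unique polynomial $h_{m,i}$. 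The identity \eqref{i4} is therefore equivalent to showing
\begin{equation*}
h_{m,i}(t^2) = \lambda^m\bigl(\delta_{i,0}(t^2-mqa) + \delta_{q,-1}\delta_{i,1} b\bigr).
\end{equation*}

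To identify $h_{m,i}$, I would distinguish three cases. For $i=0$, the rescaled operators $\tilde{L}_m:=q^{-1}L_{m,0}$ satisfy the Witt relations, and $M_{\overline 0}$ restricted to the resulting Witt subalgebra is free of rank $1$ over $U(\C L_{0,0})$; Theorem \ref{lemma-123} then identifies the restriction with some $\Omega(\lambda,a)$, giving $L_{m,0}\cdot 1=\lambda^m(t^2-mqa)\cdot 1$ for some $\lambda\in\C^*$ and $a\in\C$. For $q=-1$ and $i=1$, a direct check of \eqref{J_a_b_} shows that $\mathrm{span}\{L_{m,0},L_{m,1}\colon m\in\Z\}$ is isomorphic modulo center to the Heisenberg--Virasoro algebra $\mathcal{H}$ under the correspondence $\tilde{L}_m=-L_{m,0}$, $I_m=L_{m,1}$. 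Theorem \ref{lemma-456} then forces $L_{m,1}\cdot 1=\lambda^m b\cdot 1$ for the same $\lambda$ and some $b\in\C$, matching the claimed formula.

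The main obstacle is the remaining case: showing $h_{m,i}\equiv 0$ whenever $i\geq 1$ with $q\neq -1$, or $i\geq 2$. My plan is first to prove $h_{0,i}=0$, and then to propagate to arbitrary $m$ via $[L_{0,i},L_{n,0}]=n(i+q)L_{n,i}$: the reduction above with $m=0$ shows that $L_{0,i}\cdot 1=0$ already implies $L_{0,i}$ annihilates all of $M_{\overline 0}$, so $[L_{0,i},L_{n,0}]\cdot 1=0$, and the identity forces $h_{n,i}=0$ whenever $n(i+q)\neq 0$; the finitely many exceptional indices are covered by the analogous relation $[L_{m,i},L_{n,0}]=(n(i+q)-mq)L_{m+n,i}$ with $m\neq 0$. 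To establish $h_{0,i}=0$, I would use a bracket decomposition such as $k(i+2q)L_{0,i}=[L_{-k,j},L_{k,i-j}]$ (cf.~\eqref{J_a_b_}) with a suitable $j<i$ and $k\neq 0$, evaluate both sides on $1$ using the inductive hypothesis that $h_{-k,j}$ and $h_{k,i-j}$ are already known (of the explicit form or zero), and compare the resulting polynomials in $t^2$; a degree count then forces $h_{0,i}\equiv 0$. The degenerate situation $q=-i/2$---most notably $q=-1$, $i=2$, where every purely even bracket producing $L_{0,i}$ has coefficient zero---is handled via the superbracket $[G_{k,j},G_{-k,i-j}]=2qL_{0,i}$, which reduces the problem to a parallel computation of $G_{l,j}\cdot 1\in M_{\overline 1}=t\C[t^2]\cdot 1$ obtained by the same inductive strategy applied to the odd generators. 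This parallel odd-generator analysis is the technical heart of the argument and follows the pattern of~[\ref{CY}, Theorem~3.1].
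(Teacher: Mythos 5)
Your reduction via \eqref{i1} to computing $L_{m,i}1=h_{m,i}(t^2)1$, and your identification of the cases $i=0$ (rescaled Witt subalgebra plus Theorem \ref{lemma-123}) and $q=-1,\ i=1$ (Heisenberg--Virasoro subalgebra plus Theorem \ref{lemma-456}), are correct and match what the paper intends -- note the paper itself offers no more than ``\eqref{i1} and a similar proof of [\ref{CY}, Theorem 3.1]'', so the entire burden of the lemma sits in the remaining step $h_{m,i}=0$, which is exactly where your plan has gaps. First, your induction has no valid base at $i=1$, $q\neq-1$: the only decomposition of the form you propose is $k(1+2q)L_{0,1}=[L_{-k,0},L_{k,1}]$, and the factor $L_{k,1}$ carries the unknown $h_{k,1}$ (here $i-j=i$, so the inductive hypothesis does not apply); evaluating on $1$ gives a functional equation coupling $h_{0,1}$ and $h_{k,1}$, not the conclusion $h_{0,1}=0$. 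Closing this requires a separate degree/leading-coefficient analysis of that functional equation, which you do not supply. Second, when $i+q=0$ the phrase ``finitely many exceptional indices'' is wrong: $n(i+q)=0$ for \emph{every} $n$, so the whole family $\{L_{n,-q}\}_{n}$ escapes your first propagation, and the fallback $[L_{m,i},L_{n,0}]=-mqL_{m+n,i}$ only relates the unknowns $h_{m,i}$ and $h_{m+n,i}$ to one another; it produces nothing without an independently established anchor.

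The most serious issue is the case $i=-2q$, which you correctly flag: there every even bracket landing in $L_{0,i}$ has coefficient $k(i+2q)=0$, and in fact no purely even argument can succeed -- $L_{0,-2q}$ does not lie in $[\R_{\overline{0}},\R_{\overline{0}}]$, and one can check that the even part alone admits rank-one free modules on which $L_{0,-2q}$ acts by an arbitrary nonzero scalar. So the superbracket $[G_{k,j},G_{-k,i-j}]=2qL_{0,i}$ is genuinely needed. But evaluating it on $1$ requires knowing $G_{n,l}1$ and $G_{n,l}t1$, which is precisely the content of the paper's Lemma \ref{lemmab2} -- and that lemma's proof uses the present one (via \eqref{i4} and the value of $L_{2m,2i}1$ in \eqref{eqa5}). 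A correct argument must therefore run a joint induction over the two coupled families $L_{m,i}1$ and $G_{m,i}1$ simultaneously, carefully ordered to avoid this circularity. You name this step the technical heart of the proof but leave it entirely unexecuted, so as written the proposal does not constitute a proof of the lemma.
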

%\begin{proof} With (\ref{i1}) and a similar proof of [\ref{CY}, Theorem 3.1], we can obtain Lemma \ref{lemmab1}.
%\end{proof}
The following Lemma about the action of $G_{m,i}$ on the homogeneous basis element $1\in M$ with $m\in \Z$ and $i\in \Z_{+}$ is crucial for our further discussion.
\begin{lemm}\label{lemmab2}We have $G_{m,i}1=\lambda^{m}\delta_{i,0}t1$, for any $m\in \Z$ and $i\in \Z_{+}$.
\end{lemm}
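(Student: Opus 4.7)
The plan is to determine $G_{m,i}\cdot 1$ by applying the universal enveloping algebra identity $G_{m,i}^{2}=qL_{2m,2i}$ (which follows from $(\ref{J_a_b_c d})$ via $[G_{m,i},G_{m,i}]=2G_{m,i}^{2}$ in $U(\R)$) to the cyclic vector $1$ and then matching the results as elements of $M=\C[t^{2}]\cdot 1\oplus t\C[t^{2}]\cdot 1$.

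First I will exploit parity together with the decomposition $M_{\overline{1}}=G_{0,0}\C[L_{0,0}]\cdot 1$ to write
\[
G_{m,i}\cdot 1 = p_{m,i}(L_{0,0})G_{0,0}\cdot 1 = p_{m,i}(t^{2})\,t\cdot 1
\]
for a unique polynomial $p_{m,i}\in\C[x]$; the goal will then be to show $p_{m,i}=\la^{m}\de_{i,0}$. Using $[L_{0,0},G_{0,0}]=0$ and $G_{0,0}^{2}=qL_{0,0}$, one obtains $G_{0,0}G_{m,i}\cdot 1=qt^{2}p_{m,i}(t^{2})\cdot 1$. Combined with the supercommutator $G_{0,0}G_{m,i}+G_{m,i}G_{0,0}=2qL_{m,i}$ and Lemma \ref{lemmab1}, this yields a closed formula for $G_{m,i}G_{0,0}\cdot 1$ in terms of $p_{m,i}$.

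Next I will apply $G_{m,i}$ one more time, using $(\ref{i2})$ to slide $G_{m,i}$ past $p_{m,i}(L_{0,0})$ via the shift $L_{0,0}\mapsto L_{0,0}-mq$. This rewrites $G_{m,i}^{2}\cdot 1$ as $p_{m,i}(t^{2}-mq)$ times the expression just obtained. On the other hand, the identity $G_{m,i}^{2}=qL_{2m,2i}$ together with Lemma \ref{lemmab1} and the fact that $\de_{2i,1}=0$ for all $i\in\Z_{+}$ forces
\[
G_{m,i}^{2}\cdot 1=q\la^{2m}\de_{i,0}(t^{2}-2mqa)\cdot 1.
\]

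The main obstacle will be extracting $p_{m,i}$ from the resulting polynomial identity in $\C[t^{2}]$. For $i=0$, a degree count will rule out $\deg p_{m,0}\geq 1$, and comparing coefficients in the constant case will collapse the identity to $(p_{m,0}-\la^{m})^{2}=0$, forcing $p_{m,0}=\la^{m}$. For $i\geq 1$ the right-hand side vanishes, and integrality of $\C[t^{2}]$ together with an inspection of constant terms (which constrains the $\de_{q,-1}\de_{i,1}b$ term) will force $p_{m,i}=0$. Combining both cases will give $G_{m,i}\cdot 1=\la^{m}\de_{i,0}\,t\cdot 1$, as claimed.
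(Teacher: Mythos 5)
Your proposal is correct and follows essentially the same route as the paper: write $G_{m,i}1=H_{m,i}(t^{2})t1$, use the anticommutator with $G_{0,0}$ together with Lemma \ref{lemmab1} to express $G_{m,i}t1$, then evaluate $G_{m,i}^{2}1=qL_{2m,2i}1$ and solve the resulting polynomial identity in $\C[t^{2}]$ (your $p_{m,i}$ is the paper's $H_{m,i}$). Your explicit degree count in the $i=0$ case is a detail the paper leaves implicit, but the argument is the same.
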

\begin{proof}Suppose  $G_{m,i}1=tH_{m,i}(t^{2})1\in t\C[t^{2}]1$ for any $m\in \Z$, $i\in \Z_{+}$ and $H_{m,i}(t^{2})\in \C[t^{2}]$. Then by the relation $G_{0,0}^{2}=qL_{0,0}$ and $(\ref{i2})$, we get
 \begin{equation}\label{j11}
   G_{0,0}G_{m,i}1= G_{0,0}tH_{m,i}(t^{2})1= H_{m,i}(t^{2})G_{0,0}t1= qt^{2}H_{m,i}(t^{2})1.
 \end{equation}
 %\begin{eqnarray*}&\!\!\!\!\!\!\!\!\!\!\!\!\!\!\!\!\!\!\!\!\!\!\!\!&
% G_{0,0}G_{m,i}1= G_{0,0}tH_{m,i}(t^{2})1=G_{0,0}H_{m,i}(L_{0,0})t1\\
% &\!\!\!\!\!\!\!\!\!\!\!\!\!\!\!\!\!\!\!\!\!\!\!\!&
% \ \ \ \ \ \ \ \ \ \ \ \ \ \,
 % = H_{m,i}(t^{2})G_{0,0}t1= H_{m,i}(t^{2})G_{0,0}G_{0,0}1\\
 % &\!\!\!\!\!\!\!\!\!\!\!\!\!\!\!\!\!\!\!\!\!\!\!\!&
%  \ \ \ \ \ \ \ \ \ \ \ \ \ \,
 % = qt^{2}H_{m,i}(t^{2})1.
%\end{eqnarray*}
Having in mind that $[G_{m,i},G_{0,0}]=2qL_{m,i}$, and applying $(\ref{i4})$  and $(\ref{j11})$, we obtain
\begin{eqnarray}\label{j12}\nonumber&\!\!\!\!\!\!\!\!\!\!\!\!&
G_{m,i}t1=G_{m,i}G_{0,0}1=2qL_{m,i}1-G_{0,0}G_{m,i}1%&\!\!\! \!\!\!&
\nonumber\\&\!\!\!\!\!\!\!\!\!&
\ \ \ \ \ \ \ \ \
=2q\lambda^{m}\big(\delta_{i,0}\big(t^{2}-mqa\big)+\delta_{q,-1}\delta_{i,1}b\big)1-qt^{2}H_{m,i}(t^{2})1.
\end{eqnarray}
%\begin{eqnarray*}&\!\!\!\!\!\!\!\!\!\!\!\!&
%G_{m,i}t1=G_{m,i}G_{0,0}1=2qL_{m,i}1-G_{0,0}G_{m,i}1\\
%&\!\!\!\!\!\!\!\!\!\!\!\!&
%\ \ \ \ \ \ \ \ \
% =2q\lambda^{m}\big(\delta_{i,0}\big(t^{2}-mqa\big)+\delta_{q,-1}\delta_{i,1}b\big)1-qt^{2}H_{m,i}(t^{2})1.
%\end{eqnarray*}
%where $d_{i}=\delta_{q,-1}\delta_{i,1}b$.
%On the other hand,
Then using $(\ref{i2})$ and $(\ref{j12})$, we are able to have the following %and $(\ref{i4})$,
\begin{eqnarray*}&\!\!\!\!\!\!\!\!\!\!\!\!&
G_{m,i}G_{m,i}1=G_{m,i}tH_{m,i}\big(t^{2}\big)1=H_{m,i}\big(t^{2}-mq\big)G_{m,i}t1\\
&\!\!\!\!\!\!\!\!\!\!\!\!&
\ \ \ \ \ \ \ \ \ \ \ \ \ \
=H_{m,i}\big(t^{2}-mq\big)\Big(2q\lambda^{m}\big(\delta_{i,0}\big(t^{2}-mqa\big)+\delta_{q,-1}\delta_{i,1}b\big)-qt^{2}H_{m,i}(t^{2})\Big)1.
%&\!\!\!\!\!\!\!\!\!\!\!\!&
%\ \ \ \
%L_{2m,2i}1=\lambda^{2m}\delta_{2i,0}\big(t^{2}-2mqa\big)1.%+\delta_{q,-1}\delta_{2i,1}b\Big)1.
\end{eqnarray*}
Therefore, combining this result with the relation $$[G_{m,i},G_{m,i}]1=2qL_{2m,2i}1=2q\lambda^{2m}\delta_{2i,0}\big(t^{2}-2mqa\big)1,$$ we can obtain
\begin{equation}\label{eqa5}
  H_{m,i}\big(t^{2}-mq\big)\Big(2\lambda^{m}\Big(\delta_{i,0}\big(t^{2}-mqa\big)+\delta_{q,-1}\delta_{i,1}b\Big)-t^{2}H_{m,i}\left(t^{2}\right)\Big)
  =\lambda^{2m}\delta_{2i,0}\big(t^{2}-2mqa\big).
\end{equation}
%\begin{eqnarray*}&\!\!\!\!\!\!\!\!\!\!\!\!&
%H_{m,i}\big(t^{2}-mq\big)\Big(2\lambda^{m}\big(\delta_{i,0}\big(t^{2}-mqa\big)+\delta_{q,-1}\delta_{i,1}b\big)-t^{2}H_{m,i}\left(t^{2}\right)\Big)1\\
%&\!\!\!\!\!\!\!\!\!\!\!\!&
%\ \ \ \ \ \ \ \ \ \ \ \ \ \
%=\lambda^{2m}\delta_{2i,0}\big(t^{2}-2mqa\big)1.
%\end{eqnarray*}
Taking $i=0$ in $(\ref{eqa5})$, and comparing the coefficients of $t^{2}$, we have $ H_{m,0}\big(t^{2}\big)=\lambda^{m}$. % Consequently, $G_{m,0}1=\lambda^{m}t1$.
Take $i\in \N$ in $(\ref{eqa5})$, then we get
\begin{equation*}
 H_{m,i}\big(t^{2}-mq\big)\Big(2\lambda^{m}\delta_{q,-1}\delta_{i,1}b-t^{2}H_{m,i}\left(t^{2}\right)\Big)=0.
\end{equation*}
Thus, $H_{m,i}\big(t^{2}\big)=0$ for $i\in \N$. Consequently, $G_{m,i}1=tH_{m,i}(t^{2})1=\lambda^{m}\delta_{i,0}t1$ for any $m\in \Z$ and $i\in \Z_{+}$.
Hence, the proof is completed. %which deduces that $G_{m,i}1=0$ for any $m\in \Z$ and $i\in \N$.
\end{proof}

Now we are ready to present the proof of Theorem $\ref{theo1.5}\,(4)$, which gives a complete classification of free $U(\eta)$-modules of rank 1 over the Lie superalgebra of Block type $\R$.\vskip6pt

\noindent{\it Proof of Theorem $\ref{theo1.5}\,(4)$.} For any $f\left(t^{2}\right)\in \C[t^{2}]$, $m\in \Z$ and $i\in \Z_{+}$, by Lemma $\ref{lemmab2}$ and $(\ref{i2})$, we get
\begin{eqnarray}\label{j13}&\!\!\!\!\!\!\!\!\!\!\!\!&
G_{m,i}f\left(t^{2}\right)1=G_{m,i}f\big(L_{0,0}\big)1=f\big(L_{0,0}-mq\big)G_{m,i}1=\lambda^{m}\delta_{i,0}tf\big(t^{2}-mq\big)1.
%=f\big(L_{0,0}-mq\big)G_{m,i}1\\
%&\!\!\!\!\!\!\!\!\!\!\!\!&
%\ \ \ \ \ \ \ \ \ \ \ \ \ \
%=f\big(t^{2}-mq\big)G_{m,i}1=\delta_{i,0}\lambda^{m}tf\big(t^{2}-mq\big)1.
\end{eqnarray}
Moreover, if we take into account $(\ref{i2})$ and $(\ref{i4})$, then from Lemma $\ref{lemmab2}$, we have
\begin{eqnarray}\label{j14}\!\!\!\!\!\!\!\!\!\!\!\!&\!\!\!\!\!\!\!\!\!\!\!\!\!\!\!&
\ \ \ \ \ \ \ \ \ \ \ \ G_{m,i}tf\left(t^{2}\right)1=f\big(t^{2}-mq\big)G_{m,i}t1=f\big(t^{2}-mq\big)G_{m,i}G_{0,0}1
\nonumber\\\!\!\!\!\!\!\!\!\!\!\!\!&\!\!\!\!\!\!\!\!\!\!\!\!\!\!\!&
\ \ \ \ \ \ \ \ \ \ \ \ \ \ \ \ \ \ \ \ \ \ \ \ \ \ \ \ \
=2qf\big(t^{2}-mq\big)L_{m,i}1-f\big(t^{2}-mq\big)G_{0,0}G_{m,i}1
\nonumber\\\!\!\!\!\!\!\!\!\!\!\!\!&\!\!\!\!\!\!\!\!\!\!\!\!\!\!\!&
\ \ \ \ \ \ \ \ \ \ \ \ \ \ \ \ \ \ \ \ \ \ \ \ \ \ \ \ \
=f\big(t^{2}-mq\big)\Big(2q\lambda^{m}\Big(\delta_{i,0}\big(t^{2}-mqa\big)+\delta_{q,-1}\delta_{i,1}b\Big)-q\lambda^{m}\delta_{i,0}t^{2}\Big)1
\nonumber\\\!\!\!\!\!\!\!\!\!\!\!\!&\!\!\!\!\!\!\!\!\!\!\!\!\!\!\!&
\ \ \ \ \ \ \ \ \ \ \ \ \ \ \ \ \ \ \ \ \ \ \ \ \ \ \ \ \ =q\lambda^{m}\Big(\delta_{i,0}\left(t^{2}-2mqa\right)+2\delta_{q,-1}\delta_{i,1}b\Big)f\big(t^{2}-mq\big)1,
\end{eqnarray}
where the third equality follows from relation $[G_{m,i},G_{0,0}]=2qL_{m,i}$.
Furthermore, using $(\ref{i4})$ and $(\ref{j13})$, we can then obtain the following
\begin{eqnarray}\label{j15}\!\!\!\!\!\!\!\!\!\!\!\!\!\!\!\!&\!\!\!\!\!\!\!\!\!\!\!\!\!\!\!&
\ \ \ \ \ \ \ \ \ \ \ \ \ L_{m,i}tf\left(t^{2}\right)1=L_{m,i}G_{0,0}f\left(t^{2}\right)1=G_{0,0}L_{m,i}f\left(t^{2}\right)1-\frac{mq}{2}G_{m,i}f\left(t^{2}\right)1
%\nonumber\\\!\!\!\!\!\!\!\!\!\!\!\!&\!\!\!\!\!\!\!\!\!\!\!\!\!\!\!&
%\ \ \ \ \ \ \ \ \ \ \ \ \ \ \ \ \ \ \ \ \ \ \
%=G_{0,0}L_{m,i}f\left(t^{2}\right)1-\frac{m}{2}qG_{m,i}f\left(t^{2}\right)1
\nonumber\\\!\!\!\!\!\!\!\!\!\!\!\!\!\!\!\!&\!\!\!\!\!\!\!\!\!\!\!\!\!\!\!&
\ \ \ \ \ \ \ \ \ \ \ \ \ \ \ \ \ \ \ \ \ \ \ \ \ \ \ \ \ \
=G_{0,0}\lambda^{m}\Big(\delta_{i,0}\big(t^{2}-mqa\big)+\delta_{q,-1}\delta_{i,1}b\Big)f\big(t^{2}-mq\big)1
-\frac{mq}{2}\lambda^{m}\delta_{i,0}tf\big(t^{2}-mq\big)1
\nonumber\\\!\!\!\!\!\!\!\!\!\!\!\!\!\!\!\!&\!\!\!\!\!\!\!\!\!\!\!\!\!\!\!&
\ \ \ \ \ \ \ \ \ \ \ \ \ \ \ \ \ \ \ \ \ \ \ \ \ \ \ \ \ \
=\lambda^{m}t\Big(\delta_{i,0}\big(t^{2}-mqa\big)+\delta_{q,-1}\delta_{i,1}b-\frac{mq}{2}\delta_{i,0}\Big)f\big(t^{2}-mq\big)1
\nonumber\\\!\!\!\!\!\!\!\!\!\!\!\!\!\!\!\!&\!\!\!\!\!\!\!\!\!\!\!\!\!\!\!&
\ \ \ \ \ \ \ \ \ \ \ \ \ \ \ \ \ \ \ \ \ \ \ \ \ \ \ \ \ \
=\lambda^{m}t\Big(\delta_{i,0}\Big(t^{2}-mqa-\frac{mq}{2}\Big)+\delta_{q,-1}\delta_{i,1}b\Big)f\big(t^{2}-mq\big)1,
\end{eqnarray}
where from relation $[L_{m,i},G_{0,0}]=-\frac{mq}{2}G_{m,i}$, we can deduce that the second equality holds.

Hence, $(\ref{j13})$--$(\ref{j15})$ together with $(\ref{M})$ and $(\ref{i4})$ imply that $M\cong\Omega_{\R}(\lambda,a,b)$ as $\R$-modules and then we complete the proof. %of Theorem $\ref{theo1.5}\,(4)$.}
\hfill$\Box$

\section{Proof of Theorem \ref{theo1.6} (1), (2) and (3)}
Let $\Omega_{\L}(\lambda,a,b)=\C[t]\oplus \C[x]$. Then $\Omega_{\L}(\lambda,a,b)$ is a $\Z_{2}$-graded vector space with $\Omega_{\L}(\lambda,a,b)_{\overline{0}}=\C[t]$ and $\Omega_{\L}(\lambda,a,b\big)_{\overline{1}}=\C[x]$. The following gives a precise construction of an $\L$-module structure on $\Omega_{\L}(\lambda,a,b)$.
\begin{defi}\label{def4.1}
{\rm For $\lambda\in \C^{*}$, $a,b\in \C$, we define the action of $\L$ on $\Omega_{\L}(\lambda,a,b)$ as follows:
\begin{eqnarray}&\!\!\!\!\!\!\!\!\!\!\!\!\!\!\!\!\!\!\!\!\!\!\!\!&\label{F1}
L_{m,i}f(t)=\lambda^{m}\big(\delta_{i,0}\big(t-mqa\big)+\delta_{q,-1}\delta_{i,1}b\big)f(t-mq),\\
&\!\!\!\!\!\!\!\!\!\!\!\!\!\!\!\!\!\!\!\!\!\!\!\!&\label{F2}
L_{m,i}g(x)=\lambda^{m}\Big(\delta_{i,0}\Big(x-mqa-\frac{mq}{2}\Big)+\delta_{q,-1}\delta_{i,1}b\Big)g(x-mq),\\
&\!\!\!\!\!\!\!\!\!\!\!\!\!\!\!\!\!\!\!\!\!\!\!\!&\label{F3}
G_{r,i}f(t)= \lambda^{r-\frac{1}{2}}\delta_{i,0}f(x-rq),\\
&\!\!\!\!\!\!\!\!\!\!\!\!\!\!\!\!\!\!\!\!\!\!\!\!&\label{F4}
G_{r,i}g(x)=q\lambda^{r+\frac{1}{2}}\big(\delta_{i,0}(t-2rqa)+2\delta_{q,-1}\delta_{i,1}b\big)g(t-rq),
\end{eqnarray}
where $f(t)\in \C[t]$, $g(x)\in \C[x]$, $m\in \Z$, $r\in \frac{1}{2}+\Z$ and $i\in \Z_{+}$.}
\end{defi}
According to the above definition, firstly we will show that $\Omega_{\L}(\lambda,a,b)$ is an $\L$-module, namely, the actions of all $L_{m,i}$, $G_{l,j}$ on $\Omega_{\L}(\lambda,a,b)$ satisfy the relations in $\L$.\vskip6pt

\noindent{\it Proof of Theorem $\ref{theo1.6}$} (1). Let $m,n\in \Z$, $r,l\in \frac{1}{2}+\Z$ and $i,j\in \Z_{+}$. For convenience, we set $d_{i}=\delta_{q,-1}\delta_{i,1}b$. By definitions $(\ref{F1})$ and $(\ref{F2})$, and a similar proof of Theorem $\ref{theo1.5}$ (1), we get
\begin{eqnarray*}&\!\!\!\!\!\!\!\!\!\!\!\!\!\!\!\!\!\!\!\!\!\!\!\!&
[L_{m,i},L_{n,j}]f(t)=L_{m,i}L_{n,j}f(t)-L_{n,j}L_{m,i}f(t), \\
&\!\!\!\!\!\!\!\!\!\!\!\!\!\!\!\!\!\!\!\!\!\!\!\!&
  [L_{m,i},L_{n,j}]g(x)=L_{m,i}L_{n,j}g(x)-L_{n,j}L_{m,i}g(x).
\end{eqnarray*}
%for any $m\in \Z$ and $i\in \Z_{+}$.
Moreover, using definitions $(\ref{F1})$--$(\ref{F3})$, we obtain
\begin{eqnarray*}\!\!\!\!\!\!\!\!\!\!\!\!&\!\!\!\!\!\!\!\!\!\!\!\!\!\!\!&
\ \ \ \ \ \ \ \ \ \ \ \ \ \ \ \ \ \ \ \ \,L_{m,i}G_{l,j}f(t)-G_{l,j}L_{m,i}f(t)
\nonumber\\\!\!\!\!\!\!\!\!\!\!\!\!&\!\!\!\!\!\!\!\!\!\!\!\!\!\!\!&
\ \ \ \ \ \ \ \ \ \ \ \ \ \ \ \ \
=L_{m,i}\lambda^{l-\frac{1}{2}}\delta_{j,0}f\big(x-lq\big)-G_{l,j}\lambda^{m}\big(\delta_{i,0}\big(t-mqa\big)+d_{i}\big)f\big(t-mq\big)
\nonumber\\\!\!\!\!\!\!\!\!\!\!\!\!&\!\!\!\!\!\!\!\!\!\!\!\!\!\!\!&
\ \ \ \ \ \ \ \ \ \ \ \ \ \ \ \ \
=\lambda^{m+l-\frac{1}{2}}\delta_{j,0}\Big(\delta_{i,0}\Big(x-mqa-\frac{mq}{2}\Big)+d_{i}\Big)f\big(x-lq-mq\big)
\nonumber\\\!\!\!\!\!\!\!\!\!\!\!\!&\!\!\!\!\!\!\!\!\!\!\!\!\!\!\!&
\ \ \ \ \ \ \ \ \ \ \ \ \ \ \ \ \ \ \ \
-\lambda^{m+l-\frac{1}{2}}\delta_{j,0}\Big(\delta_{i,0}\big(x-mqa-lq\big)+d_{i}\Big)f\big(x-lq-mq\big)
\nonumber\\\!\!\!\!\!\!\!\!\!\!\!\!&\!\!\!\!\!\!\!\!\!\!\!\!\!\!\!&
\ \ \ \ \ \ \ \ \ \ \ \ \ \ \ \ \
=\Big(lq-\frac{mq}{2}\Big)\lambda^{m+l-\frac{1}{2}}\delta_{i+j,0}f\big(x-(l+m)q\big)
\nonumber\\\!\!\!\!\!\!\!\!\!\!\!\!&\!\!\!\!\!\!\!\!\!\!\!\!\!\!\!&
\ \ \ \ \ \ \ \ \ \ \ \ \ \ \ \ \
=\Big(l\big(i+q\big)-m\Big(j+\frac{q}{2}\Big)\Big)G_{m+l,i+j}f(t)
%\nonumber\\\!\!\!\!\!\!\!\!\!\!\!\!&\!\!\!\!\!\!\!\!\!\!\!\!\!\!\!&
%\ \ \ \ \ \ \ \ \ \ \ \ \ \ \ \ \
=[L_{m,i},G_{l,j}]f(t),
\end{eqnarray*}
where the last equality is obtained by relation $(\ref{J_a_b c_})$.
With definitions $(\ref{F1})$, $(\ref{F2})$ and $(\ref{F4})$, we can have the following
\begin{eqnarray*}\!\!\!\!\!\!\!\!\!\!\!\!\!\!\!\!\!\!\!\!\!\!\!\!&\!\!\!\!\!\!\!\!\!\!\!\!\!\!\!&
\ \ \ \ \ \ \ \ \ \ \ \ \ \ \ \ \ \ \ L_{m,i}G_{l,j}g(x)-G_{l,j}L_{m,i}g(x)
\nonumber\\\!\!\!\!\!\!\!\!\!\!\!\!\!\!\!\!\!\!\!\!\!\!\!\!&\!\!\!\!\!\!\!\!\!\!\!\!\!\!\!&
\ \ \ \ \ \ \ \ \ \ \ \ \ \ \
=L_{m,i}q\lambda^{l+\frac{1}{2}}\big(\delta_{j,0}(t-2lqa)+2d_{j}\big)g\big(t-lq\big)
%\nonumber\\\!\!\!\!\!\!\!\!\!\!\!\!&\!\!\!\!\!\!\!\!\!\!\!\!\!\!\!&
%\ \ \ \ \ \ \ \ \ \ \ \ \
-G_{l,j}\lambda^{m}\Big(\delta_{i,0}\Big(x-mqa-\frac{mq}{2}\Big)+d_{i}\Big)g\big(x-mq\big)
\nonumber\\\!\!\!\!\!\!\!\!\!\!\!\!\!\!\!\!\!\!\!\!\!\!\!\!&\!\!\!\!\!\!\!\!\!\!\!\!\!\!\!&
\ \ \ \ \ \ \ \ \ \ \ \ \ \ \
=q\lambda^{m+l+\frac{1}{2}}\Big(\delta_{i,0}\big(t-mqa\big)+d_{i}\Big)\Big(\delta_{j,0}\big(t-2lqa-mq\big)+2d_{j}\Big)g\big(t-lq-mq\big)
\nonumber\\\!\!\!\!\!\!\!\!\!\!\!\!\!\!\!\!\!\!\!\!\!\!\!\!&\!\!\!\!\!\!\!\!\!\!\!\!\!\!\!&
\ \ \ \ \ \ \ \ \ \ \ \ \ \ \ \ \ \
-q\lambda^{m+l+\frac{1}{2}}\Big(\delta_{j,0}\big(t-2lqa\big)+2d_{j}\Big)\Big(\delta_{i,0}\Big(t-mqa-\frac{mq}{2}-lq\Big)+d_{i}\Big)g\big(t-lq-mq\big)
\nonumber\\\!\!\!\!\!\!\!\!\!\!\!\!\!\!\!\!\!\!\!\!\!\!\!\!&\!\!\!\!\!\!\!\!\!\!\!\!\!\!\!&
\ \ \ \ \ \ \ \ \ \ \ \ \ \ \
=q\lambda^{m+l+\frac{1}{2}}\Big(\delta_{i+j,0}\Big(lq-\frac{mq}{2}\Big)\big(t-2(m+l)qa\big)-\delta_{j,0}d_{i}mq+\big(2l+m\big)q\delta_{i,0}d_{j}\Big)g\big(t-(l+m)q\big)
\nonumber\\\!\!\!\!\!\!\!\!\!\!\!\!\!\!\!\!\!\!\!\!\!\!\!\!&\!\!\!\!\!\!\!\!\!\!\!\!\!\!\!&
\ \ \ \ \ \ \ \ \ \ \ \ \ \ \
=\Big(l\big(i+q\big)-m\Big(j+\frac{q}{2}\Big)\Big)q\lambda^{m+l+\frac{1}{2}}\Big(\delta_{i+j,0}\big(t-2(m+l)qa\big)+2d_{i+j}\Big)g\big(t-(l+m)q\big)
\nonumber\\\!\!\!\!\!\!\!\!\!\!\!\!\!\!\!\!\!\!\!\!\!\!\!\!&\!\!\!\!\!\!\!\!\!\!\!\!\!\!\!&
\ \ \ \ \ \ \ \ \ \ \ \ \ \ \
=\Big(l\big(i+q\big)-m\Big(j+\frac{q}{2}\Big)\Big)G_{m+l,i+j}g(x)
%\nonumber\\\!\!\!\!\!\!\!\!\!\!\!\!&\!\!\!\!\!\!\!\!\!\!\!\!\!\!\!&
%\ \ \ \ \ \ \ \ \ \ \ \ \ \ \
=[L_{m,i},G_{l,j}]g(x),
\end{eqnarray*}
where the last equality follows again from relation $(\ref{J_a_b c_})$.
%These imply that %$[L_{m,i},G_{l,j}]f(t)=L_{m,i}G_{l,j}f(t)-G_{l,j}L_{m,i}f(t)$ and $[L_{m,i},G_{l,j}]g(x)=L_{m,i}G_{l,j}g(x)-G_{l,j}L_{m,i}g(x)$
%\begin{eqnarray*}&\!\!\!\!\!\!\!\!\!\!\!\!\!\!\!\!\!\!\!\!\!\!\!\!&
%[L_{m,i},G_{l,j}]f(t)=L_{m,i}G_{l,j}f(t)-G_{l,j}L_{m,i}f(t), \\
%&\!\!\!\!\!\!\!\!\!\!\!\!\!\!\!\!\!\!\!\!\!\!\!\!&
%[L_{m,i},G_{l,j}]g(x)=L_{m,i}G_{l,j}g(x)-G_{l,j}L_{m,i}g(x).
%\end{eqnarray*}
Furthermore, due to definitions $(\ref{F3})$ and $(\ref{F4})$, it follows that
\begin{eqnarray*}\!\!\!\!\!\!\!\!\!\!\!\!&\!\!\!\!\!\!\!\!\!\!\!\!\!\!\!&
\ \ \ \ \ \ \ \ \ \ \ \ \ G_{l,i}G_{r,j}f(t)=G_{l,i}\lambda^{r-\frac{1}{2}}\delta_{j,0}f(x-rq)
\nonumber\\\!\!\!\!\!\!\!\!\!\!\!\!&\!\!\!\!\!\!\!\!\!\!\!\!\!\!\!&
\ \ \ \ \ \ \ \ \ \ \ \ \ \ \ \ \ \ \ \ \ \ \ \ \ \ \ \ \,
=q\lambda^{r+l}\delta_{j,0}\Big(\delta_{i,0}\big(t-2lqa\big)+2d_{i}\Big)f\big(t-lq-rq\big),
\\[4pt]%\end{eqnarray*}\begin{eqnarray*}
\!\!\!\!\!\!\!\!\!\!\!\!&\!\!\!\!\!\!\!\!\!\!\!\!\!\!\!&
\ \ \ \ \ \ \ \ \ \ \ \ \
G_{l,i}G_{r,j}g(x)=G_{l,i}q\lambda^{r+\frac{1}{2}}\Big(\delta_{j,0}\big(t-2rqa\big)+2d_{j}\Big)g\big(t-rq\big)
 \nonumber\\\!\!\!\!\!\!\!\!\!\!\!\!&\!\!\!\!\!\!\!\!\!\!\!\!\!\!\!&
\ \ \ \ \ \ \ \ \ \ \ \ \ \ \ \ \ \ \ \ \ \ \ \ \ \ \ \ \
 =q\lambda^{r+l}\delta_{i,0}\Big(\delta_{j,0}\big(x-2rqa-lq\big)+2d_{j}\Big)g\big(x-lq-rq\big).
\end{eqnarray*}
Then, by a little lengthy but straightforward computation, we can obtain %combining $(\ref{F1})$ with $(\ref{F2})$, we can obtain
%Furthermore, due to $(\ref{F1})$, $(\ref{F3})$ and $(\ref{F4})$, it follows that
\begin{eqnarray*}\!\!\!\!\!\!\!\!\!\!\!\!&\!\!\!\!\!\!\!\!\!\!\!\!\!\!\!&
\ \ \ \ \ \ \ \ \ \  \ \ \ \ \ \ \ \ \ G_{l,i}G_{r,j}f(t)+G_{r,j}G_{l,i}f(t)
%\nonumber\\\!\!\!\!\!\!\!\!\!\!\!\!&\!\!\!\!\!\!\!\!\!\!\!\!\!\!\!&
%\ \ \ \ \ \ \ \ \ \  \ \ \ \ \ \
%=G_{l,i}\lambda^{r-\frac{1}{2}}\delta_{j,0}f(x-rq)+G_{r,j}\lambda^{l-\frac{1}{2}}\delta_{i,0}f(x-lq)
%\nonumber\\\!\!\!\!\!\!\!\!\!\!\!\!&\!\!\!\!\!\!\!\!\!\!\!\!\!\!\!&
%\ \ \ \ \ \ \ \ \ \ \ \ \ \ \ \
%=q\lambda^{r+l}\delta_{j,0}\Big(\delta_{i,0}\big(t-2lqa\big)+2d_{i}\Big)f\big(t-lq-rq\big)
%\nonumber\\\!\!\!\!\!\!\!\!\!\!\!\!&\!\!\!\!\!\!\!\!\!\!\!\!\!\!\!&
%\ \ \ \ \ \ \ \ \ \ \ \ \ \ \ \ \ \ \
%+q\lambda^{r+l}\delta_{i,0}\Big(\delta_{j,0}\big(t-2rqa\big)+2d_{j}\Big)f\big(t-lq-rq\big)
\nonumber\\\!\!\!\!\!\!\!\!\!\!\!\!&\!\!\!\!\!\!\!\!\!\!\!\!\!\!\!&
\ \ \ \ \ \ \ \ \ \ \ \ \ \ \ \
=q\lambda^{r+l}\Big(\delta_{i,0}\delta_{j,0}\big(2t-2(l+r)qa\big)+2\delta_{j,0}d_{i}+2\delta_{i,0}d_{j}\Big)f\big(t-(l+r)q\big)
\nonumber\\\!\!\!\!\!\!\!\!\!\!\!\!&\!\!\!\!\!\!\!\!\!\!\!\!\!\!\!&
\ \ \ \ \ \ \ \ \ \ \ \ \ \ \ \
=2q\lambda^{r+l}\Big(\delta_{i+j,0}\big(t-(l+r)qa\big)+d_{i+j}\Big)f\big(t-(l+r)q\big)
\nonumber\\\!\!\!\!\!\!\!\!\!\!\!\!&\!\!\!\!\!\!\!\!\!\!\!\!\!\!\!&
\ \ \ \ \ \ \ \ \ \ \ \ \ \ \ \
=2qL_{r+l,i+j}f(t)
%\nonumber\\\!\!\!\!\!\!\!\!\!\!\!\!&\!\!\!\!\!\!\!\!\!\!\!\!\!\!\!&
%\ \ \ \ \ \ \ \ \ \ \ \ \ \ \ \
=[G_{l,i},G_{r,j}]f(t),
\end{eqnarray*}
where the third equality is yielded by definition \eqref{F1}, and the last equality is obtained by relation \eqref{J_a_b_c d}. A similar computation yields that

%From $(\ref{F2})$--$(\ref{F4})$, we get %Note that in the following computation, we have made use of the relations $(\ref{F2})$--$(\ref{F4})$ and we get
\begin{eqnarray*}\!\!\!\!\!\!\!\!\!\!\!\!&\!\!\!\!\!\!\!\!\!\!\!\!\!\!\!&
\ \ \ \ \ \ \ \ \ \  \ \ \ \ \ \ \ \ \ \ \ \ \ \ \ G_{l,i}G_{r,j}g(x)+G_{r,j}G_{l,i}g(x)
%\nonumber\\\!\!\!\!\!\!\!\!\!\!\!\!&\!\!\!\!\!\!\!\!\!\!\!\!\!\!\!&
%\ \ \ \ \ \ \ \ \ \  \ \ \ \ \ \ \ \ \
%=G_{l,i}q\lambda^{r+\frac{1}{2}}\Big(\delta_{j,0}\big(t-2rqa\big)+2d_{j}\Big)g\big(t-rq\big)
%\nonumber\\\!\!\!\!\!\!\!\!\!\!\!\!&\!\!\!\!\!\!\!\!\!\!\!\!\!\!\!&
%\ \ \ \ \ \ \ \ \ \  \ \ \ \ \ \ \ \ \  \ \ \
%+G_{r,j}q\lambda^{l+\frac{1}{2}}
%\Big(\delta_{i,0}\big(t-2lqa\big)+2d_{i}\Big)g\big(t-lq\big)
%\nonumber\\\!\!\!\!\!\!\!\!\!\!\!\!&\!\!\!\!\!\!\!\!\!\!\!\!\!\!\!&
%\ \ \ \ \ \ \ \ \ \ \ \ \ \ \ \ \ \ \
%=q\lambda^{r+l}\delta_{i,0}\Big(\delta_{j,0}\big(x-2rqa-lq\big)+2d_{j}\Big)g\big(x-lq-rq\big)
%\nonumber\\\!\!\!\!\!\!\!\!\!\!\!\!&\!\!\!\!\!\!\!\!\!\!\!\!\!\!\!&
%\ \ \ \ \ \ \ \ \ \  \ \ \ \ \ \ \ \ \  \ \ \
%+q\lambda^{r+l}\delta_{j,0}\Big(\delta_{i,0}\big(x-2lqa-rq\big)+2d_{i}\Big)g\big(x-lq-rq\big)
\nonumber\\\!\!\!\!\!\!\!\!\!\!\!\!&\!\!\!\!\!\!\!\!\!\!\!\!\!\!\!&
\ \ \ \ \ \ \ \ \ \ \ \ \ \ \ \ \ \ \ \ \ \
=q\lambda^{r+l}\Big(\delta_{i,0}\delta_{j,0}\big(2x-(2a+1)(l+r)q\big)+2\delta_{j,0}d_{i}+2\delta_{i,0}d_{j}\Big)g\big(x-(l+r)q\big)
\nonumber\\\!\!\!\!\!\!\!\!\!\!\!\!&\!\!\!\!\!\!\!\!\!\!\!\!\!\!\!&
\ \ \ \ \ \ \ \ \ \ \ \ \ \ \ \ \ \ \ \ \ \
=2q\lambda^{r+l}\Big(\delta_{i+j,0}\Big(x-\Big(a+\frac{1}{2}\Big)(l+r)q\Big)+d_{i+j}\Big)g\big(x-(l+r)q\big)
\nonumber\\\!\!\!\!\!\!\!\!\!\!\!\!&\!\!\!\!\!\!\!\!\!\!\!\!\!\!\!&
\ \ \ \ \ \ \ \ \ \ \ \ \ \ \ \ \ \ \ \ \ \
=2qL_{r+l,i+j}g(x)
%\nonumber\\\!\!\!\!\!\!\!\!\!\!\!\!&\!\!\!\!\!\!\!\!\!\!\!\!\!\!\!&
%\ \ \ \ \ \ \ \ \ \ \ \ \ \ \ \ \ \ \ \ \ \
=[G_{l,i},G_{r,j}]g(x),
\end{eqnarray*}
where the third equality is implied by definition \eqref{F2}, and the last equality follows again from relation \eqref{J_a_b_c d}.
Therefore, $\Omega_{\L}(\lambda,a,b)$ is an $\L$-module, and  Theorem $\ref{theo1.6}$ (1) follows.
\hfill$\Box$\vskip6pt

Before proving Theorem $\ref{theo1.6}$ (2) and (3), we first present the following results.
Let $\tau:\L\rightarrow \R$ be a linear map defined by
\begin{equation*}
  \tau(L_{m,i})=\frac{1}{2}L_{2m,i},\ \ \ \ \ \ \ \tau(G_{r,j})=\frac{1}{\sqrt{2}}G_{2r,j},
\end{equation*}
for $m\in \Z$, $r\in \frac{1}{2}+\Z$ and $i,j\in \Z_{+}$. It is straightforward to verify that $\tau$ is an injective Lie superalgebra homomorphism. Then $\L$ can be regarded as a subalgebra of $\R$. In particular, $\Omega_{\R}(\lambda,a,b)$ is an $\L$-module.
\begin{prop}\label{prop-3.2a} For $\lambda\in\C^{*}$ and $a,b\in\C$, the $\L$-module $\Omega_{\R}(\lambda,a,b)$ is simple  if and only if it is a simple $\R$-module.
\end{prop}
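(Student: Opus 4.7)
One direction of the biconditional is essentially free: via the embedding $\tau:\L\hookrightarrow\R$, every $\R$-submodule of $\Omega_{\R}(\lambda,a,b)$ is automatically an $\L$-submodule, so $\L$-simplicity immediately implies $\R$-simplicity. The work thus lies in the converse, for which I would import the even-part analysis used in the proof of Theorem \ref{theo1.5}(2) via an explicit change of variables.

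Assume $\R$-simplicity, which by Theorem \ref{theo1.5}(2) is equivalent to $a\ne 0$ or $b\ne 0$, and let $N=N_{\overline{0}}\oplus N_{\overline{1}}$ be a nonzero $\Z_2$-graded $\L$-submodule. First I would show $N_{\overline{0}}\ne 0$. Given a nonzero $tf(t^2)\in N_{\overline{1}}$, applying $\tau(G_{1/2,0})=\tfrac{1}{\sqrt{2}}G_{1,0}$ through formula \eqref{4} produces $\tfrac{q\lambda}{\sqrt{2}}(t^2-2qa)f(t^2-q)\in N_{\overline{0}}$, which is nonzero whenever $a\ne 0$; in the residual case $a=0,\ b\ne 0,\ q=-1$, applying $\tau(G_{1/2,1})=\tfrac{1}{\sqrt{2}}G_{1,1}$ through \eqref{4} yields the nonzero polynomial $-\sqrt{2}\,b\lambda\,f(t^2+1)\in N_{\overline{0}}$.

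The main step will be to deduce $N_{\overline{0}}=\C[t^2]$ from the restricted $\L_{\overline{0}}$-action. Writing $s=t^2$ and performing the change of variables $u=s/(2q)$, a direct computation using \eqref{1} together with the definition of $\tau$ shows that the Witt generators $L_m=\tfrac{1}{q}L_{m,0}\in\L_{\overline{0}}$ act on $\C[u]$ precisely as the $\W$-module $\Omega(\lambda^2,a)$ of Theorem \ref{lemma-123}, which is simple whenever $a\ne 0$. In the remaining subcase $q=-1,\ a=0,\ b\ne 0$, the operators $L_{m,0}$ and $L_{m,1}$ generate a centerless Heisenberg-Virasoro subalgebra of $\L_{\overline{0}}$, and under the same substitution they act on $\C[u]$ as the $\H$-module $\Omega(\lambda^2,0,b/2)$ of Theorem \ref{lemma-456}, which is simple since $b/2\ne 0$. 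Either way $N_{\overline{0}}=\C[t^2]$. Finally, applying $\tau(G_{1/2,0})$ to any $g(t^2)\in N_{\overline{0}}$ via \eqref{3} produces $\tfrac{\lambda}{\sqrt{2}}\,t\,g(t^2-q)\in N_{\overline{1}}$, and as $g$ varies over $\C[t^2]$ this sweeps out all of $t\C[t^2]$, giving $N=\Omega_{\R}(\lambda,a,b)$. The main technical obstacle will be tracking the rescaling $u=s/(2q)$ carefully enough that the shifts $t^2\mapsto t^2-2mq$ appearing in \eqref{1} become the unit shifts $u\mapsto u-m$ required to invoke Theorems \ref{lemma-123} and \ref{lemma-456}.
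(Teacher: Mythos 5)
Your proposal is correct and follows essentially the same route as the paper: the trivial direction via $\tau$, then for a nonzero $\L$-submodule $N$ showing $N_{\overline{0}}\neq 0$ by applying $\tau(G_{r,i})$ to the odd part, identifying $\Omega_{\R}(\lambda,a,b)_{\overline{0}}$ as a simple $\tau(\W)$- or $\tau(\H)$-module via Theorems \ref{lemma-123} and \ref{lemma-456}, and finally sweeping out $N_{\overline{1}}$ with $\tau(G_{r,0})$. The only differences are cosmetic: you spell out the change of variables that the paper leaves implicit, and your case split on $a\neq0$ versus $b\neq0$ in the first step is unnecessary since $(t^{2}-2qa)f(t^{2}-q)$ is a nonzero polynomial for every $a$.
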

\begin{proof}It is sufficient to prove that $\Omega_{\R}(\lambda,a,b)$ is a simple $\L$-module if $a\neq0$ or $b\neq0$. Let $N=N_{\overline{0}}\oplus N_{\overline{1}}$ be a nonzero submodule of $\Omega_{\R}(\lambda,a,b)$ as an $\L$-module. From  the fact $$\tau(G_{r,i})tf\left(t^{2}\right)=\frac{q}{\sqrt{2}}\lambda^{2r}\Big(\delta_{i,0}\left(t^{2}-4rqa\right)
+2\delta_{q,-1}\delta_{i,1}b\Big)f\left(t^{2}-2rq\right),$$ we can see that $N_{\overline{0}}\neq0$. Since $$\tau(L_{m,i})f\left(t^{2}\right)=\frac{1}{2}\lambda^{2m}\Big(\delta_{i,0}\left(t^{2}-2mqa\right)
+\delta_{q,-1}\delta_{i,1}b\Big)f\left(t^{2}-2mq\right)$$ for all $m\in \Z$ and $i\in \Z_{+}$, with helps of Theorems $\ref{lemma-123}$ and $\ref{lemma-456}$, we get that $\Omega_{\R}(\lambda,a,b)_{\overline{0}}$ is a simple $\tau(\W)$-module and simple $\tau(\H)$-module for $q\neq-1$ and $q=-1$ respectively. Thus, $N_{\overline{0}}=\Omega_{\R}(\lambda,a,b)_{\overline{0}}$.

Moreover, by the fact that $\tau(G_{r,j})f\left(t^{2}\right)=\frac{1}{\sqrt{2}}\lambda^{2r}\delta_{j,0}tf\left(t^{2}-2rq\right)\subseteq N_{\overline{1}}$ for any $r\in \frac{1}{2}+\Z$, $j\in \Z_{+}$ and $f\left(t^{2}\right)\in \C[t^{2}]$, we obtain $N_{\overline{1}}=\Omega_{\R}(\lambda,a,b)_{\overline{1}}$. Therefore, $N=\Omega_{\R}(\lambda,a,b)$.
%Now we only need to show that $N_{\overline{0}}=\Omega_{\R}(\lambda,a,b)_{\overline{0}}$ as $\L_{\overline{0}}$-modules and this follows from $\tau(L_{m,i})f\left(t^{2}\right)=\frac{1}{2}\lambda^{2m}\big(\delta_{i,0}\left(t^{2}-2mqa\right)
%+\delta_{q,-1}\delta_{i,1}b\big)f\left(t^{2}-2mq\right)$.
\end{proof}

We observe that there is  a close connection between the $\R$-modules and the $\L$-modules constructed in Definitions \ref{def3.1} and \ref{def4.1} respectively.
\begin{prop}\label{prop-3.3a}Let $\lambda\in \C^{*}$ and $a,b\in \C$. Then $\Omega_{\L}(\lambda,a,b)\cong \Omega_{\R}(\lambda^{\frac{1}{2}},a,2b)$ as $\L$-modules.
\end{prop}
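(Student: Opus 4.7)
The plan is to exhibit an explicit $\L$-module isomorphism
\begin{equation*}
\phi : \Omega_{\L}(\lambda,a,b) \longrightarrow \Omega_{\R}(\lambda^{1/2},a,2b),
\end{equation*}
where the target is viewed as an $\L$-module through the embedding $\tau$ introduced before Proposition~\ref{prop-3.2a}. Comparing the action of $\tau(L_{0,0}) = \frac{1}{2} L_{0,0}$ on a basis vector on each side suggests the substitution $t \mapsto t^{2}/2$ between the even variable of $\Omega_{\L}$ and the square of the single variable of $\Omega_{\R}$. I would therefore propose
\begin{equation*}
\phi(f(t)) := f(t^{2}/2), \qquad \phi(g(x)) := c\, t\, g(t^{2}/2),
\end{equation*}
for $f(t)\in\C[t]$ and $g(x)\in\C[x]$, with a constant $c\in\C^{*}$ to be determined.

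First I would pin down $c$ by testing a single action. The identity $G_{1/2,0}\cdot 1 = 1$ in $\Omega_{\L}(\lambda,a,b)$ must be sent to $\tau(G_{1/2,0})\cdot 1 = \frac{1}{\sqrt{2}}G_{1,0}\cdot 1 = \frac{\lambda^{1/2}}{\sqrt{2}}\,t$ in $\Omega_{\R}(\lambda^{1/2},a,2b)$, which forces $c = \lambda^{1/2}/\sqrt{2}$. Bijectivity is then immediate: on each graded component $\phi$ is a non-degenerate polynomial substitution, hence a vector-space isomorphism.

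The substantive step is to verify that $\phi$ is $\L$-linear, i.e.\ that $\phi(L_{m,i}v) = \tau(L_{m,i})\phi(v)$ and $\phi(G_{r,i}v) = \tau(G_{r,i})\phi(v)$ for $v$ in either graded component. Each of the four resulting identities reduces, via (\ref{F1})--(\ref{F4}) on the left and via (\ref{1})--(\ref{4}) applied with parameters $(\lambda^{1/2},a,2b)$ and index doubled by $\tau$ on the right, to an algebraic identity in the variable $t$. The factor $\frac{1}{2}$ in $\tau(L_{m,i})$ absorbs the factor of $2$ produced by $t^{2} = 2(t^{2}/2)$, the doubling $m \mapsto 2m$ (or $r \mapsto 2r$) compensates the halving of the coordinate, and the doubling $b \mapsto 2b$ in $\Omega_{\R}$ accommodates the substitution of $t^{2}/2$ for $t$ in the Kronecker-delta contributions.

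The principal obstacle is purely bookkeeping: three scalings ($\lambda \mapsto \lambda^{1/2}$, $b \mapsto 2b$, and $m \mapsto 2m$ or $r \mapsto 2r$) and the variable change $t \mapsto t^{2}/2$ act simultaneously in each identity, and one must check they cancel on the nose. The most delicate verification is $G_{r,i}$ acting on the odd component, where matching the shift $-2rqa$ from (\ref{F4}) against the $-2(2r)qa$ shift from (\ref{4}) forces the halving of the polynomial argument and in turn fixes the pairing $b \leftrightarrow 2b$.
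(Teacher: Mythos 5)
Your proposal is correct and follows essentially the same route as the paper: the paper's proof defines exactly the map $\phi(f(t)) = f(t^{2}/2)$, $\phi(g(x)) = \frac{\lambda^{1/2}}{\sqrt{2}}\,t\,g(t^{2}/2)$ and verifies the four intertwining identities against the embedding $\tau$ by direct computation with $(\ref{F1})$--$(\ref{F4})$ and $(\ref{1})$--$(\ref{4})$. Your normalization of the constant $c$ and the bookkeeping of the scalings $\lambda\mapsto\lambda^{1/2}$, $b\mapsto 2b$, $m\mapsto 2m$ all match the paper's argument.
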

\begin{proof}Let
$\phi: \Omega_{\L}(\lambda,a,b)\rightarrow\Omega_{\R}(\lambda^{\frac{1}{2}},a,2b)$ be the linear map defined by sending,
\begin{eqnarray*}
% \nonumber to remove numbering (before each equation)
%\phi: \Omega_{\L}(\lambda,a,b)&\longrightarrow&\Omega_{\R}(\lambda^{\frac{1}{2}},a,2b) \\
 f(t)&\longmapsto& f\left(\frac{t^{2}}{2}\right),\ \ \ \ \ \ \ \
 g(x)\longmapsto \frac{\lambda^{\frac{1}{2}}}{\sqrt{2}}tg\left(\frac{t^{2}}{2}\right).
 \end{eqnarray*}
% be the linear map from $\Omega_{\L}(\lambda,a,b)$ to $\Omega_{\R}(\lambda^{\frac{1}{2}},a,2b)$.
It is straightforward to prove that $\phi$ is bijective.
 In the following, we will show that it is an $\L$-module homomorphism.

 For any $m\in \Z$, $r\in \frac{1}{2}+\Z$, $i,j\in \Z_{+}$, $f(t)\in \C[t]$ and $g(x)\in \C[x]$, we set $d_{i}=\delta_{q,-1}\delta_{i,1}b$. On the one hand, from definitions $(\ref{F1})$--$(\ref{F4})$, we get the following
 \begin{eqnarray*}&\!\!\!\!\!\!\!\!\!\!\!\!\!\!\!\!\!\!\!\!\!\!\!\!&
 \ \ \ \ \ \ \ \ \ \ \ \ \ \ \ \ \ \phi\big(L_{m,i}f(t)\big) = \phi\Big(\lambda^{m}\big(\delta_{i,0}\big(t-mqa\big)+d_{i}\big)f\big(t-mq\big)\Big) \\
  &\!\!\!\!\!\!\!\!\!\!\!\!\!\!\!\!\!\!\!\!\!\!\!\!&
 \ \ \ \ \ \ \ \ \ \ \ \ \ \ \ \ \ \ \ \ \ \ \ \ \ \ \ \ \ \ \ \ \
=\lambda^{m}\Big(\delta_{i,0}\Big(\frac{t^{2}}{2}-mqa\Big)+d_{i}\Big)f\Big(\frac{t^{2}}{2}-mq\Big),\\
&\!\!\!\!\!\!\!\!\!\!\!\!\!\!\!\!\!\!\!\!\!\!\!\!&
 \ \ \ \ \ \ \ \ \ \ \ \ \ \ \ \ \ \phi\big(L_{m,i}g(x)\big) = \phi\Big(\lambda^{m}\Big(\delta_{i,0}\Big(x-mqa-\frac{mq}{2}\Big)+d_{i}\Big)g\big(x-mq\big)\Big) \\
  &\!\!\!\!\!\!\!\!\!\!\!\!\!\!\!\!\!\!\!\!\!\!\!\!&
 \ \ \ \ \ \ \ \ \ \ \ \ \ \ \ \ \ \ \ \ \ \ \ \ \ \ \ \ \ \ \ \ \ \,
=\frac{\lambda^{m+\frac{1}{2}}}{\sqrt{2}}t\Big(\delta_{i,0}\Big(\frac{t^{2}}{2}-mqa-\frac{mq}{2}\Big)+d_{i}\Big)
g\Big(\frac{t^{2}}{2}-mq\Big),\\
 &\!\!\!\!\!\!\!\!\!\!\!\!\!\!\!\!\!\!\!\!\!\!\!\!&
 \ \ \ \ \ \ \ \ \ \ \ \ \ \ \ \ \ \ \phi\big(G_{r,j}f(t)\big) = \phi\Big(\lambda^{r-\frac{1}{2}}\delta_{j,0}f\big(x-rq\big)\Big)=\frac{\lambda^{r}}{\sqrt{2}}\delta_{j,0}tf\Big(\frac{t^{2}}{2}-rq\Big),\\
%&\!\!\!\!\!\!\!\!\!\!\!\!\!\!\!\!\!\!\!\!\!\!\!\!&
%\ \ \ \ \ \ \ \ \ \ \ \ \ \ \ \,
%=\frac{1}{\sqrt{2}}\lambda^{r}\delta_{j,0}tf(\frac{1}{2}t^{2}-rq)\\
&\!\!\!\!\!\!\!\!\!\!\!\!\!\!\!\!\!\!\!\!\!\!\!\!&
 \ \ \ \ \ \ \ \ \ \ \ \ \ \ \ \ \ \ \phi\big(G_{r,j}g(x)\big)=\phi\Big(q\lambda^{r+\frac{1}{2}}\big(\delta_{j,0}(t-2rqa)+2d_{j}\big)g\big(t-rq\big)\Big)\\
&\!\!\!\!\!\!\!\!\!\!\!\!\!\!\!\!\!\!\!\!\!\!\!\!&
\ \ \ \ \ \ \ \ \ \ \ \ \ \ \  \ \ \ \ \ \ \ \ \ \ \ \ \ \ \ \ \ \ \
=q\lambda^{r+\frac{1}{2}}\Big(\delta_{j,0}\Big(\frac{t^{2}}{2}-2rqa\Big)+2d_{j}\Big)g\Big(\frac{t^{2}}{2}-rq\Big),
\end{eqnarray*}
while on the other hand, by definitions $(\ref{1})$--$(\ref{4})$, we obtain
\begin{eqnarray*}&\!\!\!\!\!\!\!\!\!\!\!\!\!\!\!\!\!\!\!\!\!\!\!\!&
 \ \ \ \ \ \ \ \ \ \ \ \ \ \ \ \tau(L_{m,i})\phi\big(f(t)\big)=\frac{1}{2}L_{2m,i}f\Big(\frac{t^{2}}{2}\Big)
=\lambda^{m}\Big(\delta_{i,0}\Big(\frac{t^{2}}{2}-mqa\Big)+d_{i}\Big)f\Big(\frac{t^{2}}{2}-mq\Big), \\
&\!\!\!\!\!\!\!\!\!\!\!\!\!\!\!\!\!\!\!\!\!\!\!\!&
 \ \ \ \ \ \ \ \ \ \ \ \ \ \ \
\tau(L_{m,i})\phi\big(g(x)\big)=\frac{1}{2}L_{2m,i}\frac{\lambda^{\frac{1}{2}}}{\sqrt{2}}tg\Big(\frac{t^{2}}{2}\Big)
%&\!\!\!\!\!\!\!\!\!\!\!\!\!\!\!\!\!\!\!\!\!\!\!\!&
 %\ \ \ \ \ \ \ \ \ \ \ \  \ \ \ \ \ \ \ \ \ \ \ \ \ \ \ \ \ \,
=\frac{\lambda^{m+\frac{1}{2}}}{\sqrt{2}}t\Big(\delta_{i,0}\Big(\frac{t^{2}}{2}-mqa-\frac{mq}{2}\Big)
+d_{i}\Big)g\Big(\frac{t^{2}}{2}-mq\Big),\\
 &\!\!\!\!\!\!\!\!\!\!\!\!\!\!\!\!\!\!\!\!\!\!\!\!&
 \ \ \ \ \ \ \ \ \ \ \ \ \ \ \
\tau(G_{r,j})\phi\big(f(t)\big)\ = \frac{1}{\sqrt{2}}G_{2r,j}f\Big(\frac{t^{2}}{2}\Big)
=\frac{\lambda^{r}}{\sqrt{2}}\delta_{j,0}tf\Big(\frac{t^{2}}{2}-rq\Big), \\
&\!\!\!\!\!\!\!\!\!\!\!\!\!\!\!\!\!\!\!\!\!\!\!\!&
 \ \ \ \ \ \ \ \ \ \ \ \ \ \ \
\tau(G_{r,j})\phi\big(g(x)\big)\ = \frac{1}{\sqrt{2}}G_{2r,j}\frac{\lambda^{\frac{1}{2}}}{\sqrt{2}}tg\Big(\frac{t^{2}}{2}\Big)
=q\lambda^{r+\frac{1}{2}}\Big(\delta_{j,0}\Big(\frac{t^{2}}{2}-2rqa\Big)+2d_{j}\Big)g\Big(\frac{t^{2}}{2}-rq\Big).
\end{eqnarray*}
Hence, we are able to deduce that
\begin{eqnarray*}&\!\!\!\!\!\!\!\!\!\!\!\!\!\!\!\!\!\!\!\!\!\!\!\!&
\ \ \ \ \ \ \ \ \ \ \ \ \ \phi\big(L_{m,i}f(t)\big)=\tau(L_{m,i})\phi\big(f(t)\big),\ \ \ \ \ \ \ \ \phi\big(L_{m,i}g(x)\big)=\tau(L_{m,i})\phi\big(g(x)\big),  \\
  &\!\!\!\!\!\!\!\!\!\!\!\!\!\!\!\!\!\!\!\!\!\!\!\!&
\ \ \ \ \ \ \ \ \ \ \ \ \ \phi\big(G_{r,j}f(t)\big)\,=\tau(G_{r,j})\phi\big(f(t)\big),\ \ \ \ \ \ \ \ \,\phi\big(G_{r,j}g(x)\big)\,=\tau(G_{r,j})\phi\big(g(x)\big).
\end{eqnarray*}
This implies that $\phi$ is an $\L$-module homomorphism, and thus  an $\L$-module isomorphism.
\end{proof}
Now we are ready to prove Theorem $\ref{theo1.6}$ (2) and (3), which give the sufficient and necessary conditions for $\L$-modules $\Omega_{\L}(\lambda,a,b)$ to be simple, and the isomorphisms between them respectively.\vskip6pt

\noindent{\it Proof of Theorem $\ref{theo1.6}$} (2) and (3).
Let $\lambda,\,\mu\in \C^{*}$ and $a,\,a',\,b,\,b'\in \C$. %$r\in \frac{1}{2}+\Z$ and $i\in \Z_{+}$.
Theorem $\ref{theo1.6}$ (2) follows from the above Propositions $\ref{prop-3.2a}$, $\ref{prop-3.3a}$ and Theorem $\ref{theo1.5}$ (2).
By Proposition $\ref{prop-3.3a}$, we have $\Omega_{\L}(\lambda,a,b)\cong \Omega_{\R}(\lambda^{\frac{1}{2}},a,2b)$ and $\Omega_{\L}(\mu,a',b')\cong \Omega_{\R}(\mu^{\frac{1}{2}},a',2b')$. Thus, from Theorem $\ref{theo1.5}$ (3), we get $\Omega_{\L}(\lambda,a,b)\cong \Omega_{\L}(\mu,a',b')$ if and only if $\lambda=\mu$, $a=a'$ and $b=b'$. %Thus, we complete the proof of Theorem $\ref{theo1.6}$ (2) and (3).
\hfill$\Box$

\section{Classification of free $U(\mathfrak{h})$-modules of rank 2 over $\L$}
%\begin{lemm}\label{lemma-3.4a} \rm{(see [\ref{YYX}])} {\it Let $L=L_{\overline{0}}\oplus L_{\overline{1}}$ be a Lie superalgebra with a Cartan subalgebra $\eta\subseteq L_{\overline{0}}$, and $[L_{\overline{1}},L_{\overline{1}}]=L_{\overline{0}}$. Then there do not exist $L$-modules which are free of rank 1 as $U(\eta)$-modules.}
%\end{lemm}
Let $\mathfrak{h}=\C L_{0,0}$. %is the Cartan subalgebra of $\mathcal{H}$,%Then, \textcolor[rgb]{1.00,0.00,0.00}{$\mathfrak{h}$ is a Cartan subalgebra of $\mathcal{L}$ and}
Then, $U(\mathfrak{h})=\C [L_{0,0}]$. Using the fact in \eqref{J_a_b_c d} that $\mathcal{L}$ is generated by $\{ G_{\frac{1}{2}+r,i}\,|\,r\in \Z,\,i\in \Z_{+}\}$, by Theorem $\ref{lemma-3.4a}$, we see that there do not exist $\mathcal{L}$-modules which are free of rank 1 as $U(\mathfrak{h})$-modules.

Let $V=V_{\overline{0}}\oplus V_{\overline{1}}$ be an $\mathcal{L}$-module such that it is free of rank 2 as a $\C [L_{0,0}]$-module with two homogeneous basis elements $v$ and $v'$. Then $v$ and $v'$ have different parities. Set
$$v=1_{\overline{0}}\in V_{\overline{0}},\ \ \ \ \
v'=1_{\overline{1}}\in V_{\overline{1}},\ \ \ \ \ L_{0,0}1_{\overline{0}}=t1_{\overline{0}}, \ \ \ \ \
  L_{0,0}1_{\overline{1}}=x1_{\overline{1}}.$$ We may assume that $V=\C [t]1_{\overline{0}}\oplus \C [x]1_{\overline{1}}$ with $V_{\overline{0}}=\C [t]1_{\overline{0}}$ and $V_{\overline{1}}=\C [x]1_{\overline{1}}$.
%Suppose that $L_{0,0}1_{\overline{0}}=t1_{\overline{0}}$ and $L_{0,0}1_{\overline{1}}=x1_{\overline{1}}$.

Recall that the Witt algebra $\W$ defined in \eqref{Witt-1} and the Heisenberg-Virasoro algebra $\H$ (modulo some center) defined in \eqref{Hei-1}--\eqref{Hei-3} can be regarded as subalgebras of $\L_{\overline{0}}$ for $q\neq0$ and $q=-1$ respectively. Then, we can regard both $V_{\overline{0}}$ and $V_{\overline{1}}$ as $\W$-modules and $\H$-modules for $q\neq-1,0$ and $q=-1$ respectively. Hence, there exist $\lambda,\,\mu\in \C^{*}$ and $a,\,b,\,c,\,d\in \C$ such that
\begin{eqnarray}&\!\!\!\!\!\!\!\!\!\!\!\!\!\!\!\!\!\!\!\!\!\!\!\!&\label{eqi1}
% \nonumber to remove numbering (before each equation)
  L_{m,i}f(t)1_{\overline{0}}=\lambda^{m}\big(\delta_{i,0}\big(t-mqa\big)+\delta_{q,-1}\delta_{i,1}b\big)f(t-mq)1_{\overline{0}}, \\
  &\!\!\!\!\!\!\!\!\!\!\!\!\!\!\!\!\!\!\!\!\!\!\!\!&\label{eqi2}
L_{m,i}g(x)1_{\overline{1}}=\mu^{m}\big(\delta_{i,0}\big(x-mqc\big)+\delta_{q,-1}\delta_{i,1}d\big)g(x-mq)1_{\overline{1}}
\end{eqnarray}
for all $f(t)\in \C [t]$, $g(x)\in \C [x]$, $m\in \Z$ and $i\in \Z_{+}$. We further need the following preliminary results for later use.
\begin{lemm}\label{lemma-3.4ab}Keep the notations as above. Then $\lambda=\mu$, $\delta_{q,-1}b=\delta_{q,-1}d$ and there exists $e\in \C^{*}$ such that one of the following two cases occurs.
\rm\begin{itemize}\parskip-3pt
  \item [(i)]$c=a+\frac{1}{2},\ \ G_{\frac{1}{2},0}1_{\overline{0}}=e1_{\overline{1}},\ \ G_{\frac{1}{2},1}1_{\overline{0}}=0,\  \ G_{\frac{1}{2},1}1_{\overline{1}}=\frac{2q}{e}\lambda\delta_{q,-1}b1_{\overline{0}},\ \ G_{\frac{1}{2},0}1_{\overline{1}}=\frac{q}{e}\lambda(t-qa)1_{\overline{0}}$.
\item [(ii)]$c=a-\frac{1}{2},\ \ G_{\frac{1}{2},0}1_{\overline{1}}=e1_{\overline{0}},\ \ G_{\frac{1}{2},1}1_{\overline{1}}=0,\ \ G_{\frac{1}{2},1}1_{\overline{0}}=\frac{2q}{e}\lambda\delta_{q,-1}b1_{\overline{1}},\ \  G_{\frac{1}{2},0}1_{\overline{0}}=\frac{q}{e}\lambda(x-qa+\frac{q}{2})1_{\overline{1}}$.
\end{itemize}

%\begin{eqnarray*}\nonumber
%\!\!\!\!\!\!\!\!\!\!\!\!\!\!\!\!\!\!\!\!\!\!\!\!\!&\!\!\!\!\!\!\!\!\!\!\!\!\!\!\!&
%c=a+\frac{1}{2},\ \ G_{\frac{1}{2},0}1_{\overline{0}}=e1_{\overline{1}},\ \ G_{\frac{1}{2},0}1_{\overline{1}}=\frac{q}{e}\lambda(t-qa)1_{\overline{0}}, \\
%\nonumber
%\!\!\!\!\!\!\!\!\!\!\!\!\!\!\!\!\!\!\!\!\!\!\!\!\!&\!\!\!\!\!\!\!\!\!\!\!\!\!\!\!&
%\ \ \ \ \ \ \ \ \ \ \ \ \ \ \
%G_{\frac{1}{2},1}1_{\overline{0}}=0,\ \ \ \ \ G_{\frac{1}{2},1}1_{\overline{1}}=\frac{2}{e}q\lambda\delta_{q,-1}b1_{\overline{0}}.\\
%\nonumber
%\!\!\!\!\!\!\!\!\!\!\!\!\!\!\!\!\!\!\!\!\!\!\!\!\!&\!\!\!\!\!\!\!\!\!\!\!\!\!\!\!&
%c=a-\frac{1}{2},\ \ G_{\frac{1}{2},0}1_{\overline{0}}=\frac{q}{e}\lambda(x-qa+\frac{1}{2}q)1_{\overline{1}},\ \ G_{\frac{1}{2},0}1_{\overline{1}}=e1_{\overline{0}},\\
%\nonumber
%\!\!\!\!\!\!\!\!\!\!\!\!\!\!\!\!\!\!\!\!\!\!\!\!\!&\!\!\!\!\!\!\!\!\!\!\!\!\!\!\!&
%\ \ \ \ \ \ \ \ \ \ \ \ \ \ \
%G_{\frac{1}{2},1}1_{\overline{0}}=\frac{2}{e}q\lambda\delta_{q,-1}b1_{\overline{1}},\ \ \ \ \ \ \ \ \ G_{\frac{1}{2},1}1_{\overline{1}}=0.
%\end{eqnarray*}
\end{lemm}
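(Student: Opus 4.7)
My plan is to introduce unknown polynomials for $G_{\frac{1}{2},j}1_{\overline{i}}$ and extract them from the quadratic relations of $\L$. Since $V=\C[t]1_{\overline{0}}\oplus\C[x]1_{\overline{1}}$ and the odd generators $G_{\frac{1}{2},j}$ swap parities, I introduce $P,R\in\C[x]$ and $Q,S\in\C[t]$ via
\[
G_{\frac{1}{2},0}1_{\overline{0}}=P(x)1_{\overline{1}},\ G_{\frac{1}{2},0}1_{\overline{1}}=Q(t)1_{\overline{0}},\ G_{\frac{1}{2},1}1_{\overline{0}}=R(x)1_{\overline{1}},\ G_{\frac{1}{2},1}1_{\overline{1}}=S(t)1_{\overline{0}}.
\]
The task is then to determine these four polynomials and read off the parameter coincidences $\lambda=\mu$ and $\delta_{q,-1}b=\delta_{q,-1}d$.

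A preliminary shift identity will be the workhorse. From $(\ref{J_a_b c_})$ with $m=i=0$, $l=\tfrac{1}{2}$ one gets $[L_{0,0},G_{\frac{1}{2},j}]=\tfrac{q}{2}G_{\frac{1}{2},j}$ for every $j\in\Z_+$, and a straightforward induction parallel to $(\ref{i1})$ yields $G_{\frac{1}{2},j}f(L_{0,0})=f\bigl(L_{0,0}-\tfrac{q}{2}\bigr)G_{\frac{1}{2},j}$ for every $f\in\C[y]$. Every computation below consists of moving $L_{0,0}$ past $G_{\frac{1}{2},j}$ via this identity and then invoking $(\ref{eqi1})$ or $(\ref{eqi2})$.

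The main step exploits $G_{\frac{1}{2},0}^{2}=qL_{1,0}$ (half of relation $(\ref{J_a_b_c d})$). Writing $P(x)1_{\overline{1}}=P(L_{0,0})1_{\overline{1}}$ and $Q(t)1_{\overline{0}}=Q(L_{0,0})1_{\overline{0}}$, applying $G_{\frac{1}{2},0}^{2}$ to $1_{\overline{0}}$ and $1_{\overline{1}}$ produces
\[
P\bigl(t-\tfrac{q}{2}\bigr)Q(t)=q\lambda(t-qa),\qquad Q\bigl(x-\tfrac{q}{2}\bigr)P(x)=q\mu(x-qc).
\]
A degree count forces $\{\deg P,\deg Q\}=\{0,1\}$, producing two configurations. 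Matching coefficients in each yields $\lambda=\mu$ together with $c=a+\tfrac{1}{2}$ in case (i) ($P=e\in\C^{*}$, $Q(t)=\tfrac{q\lambda}{e}(t-qa)$) and $c=a-\tfrac{1}{2}$ in case (ii) ($Q=e$, $P(x)=\tfrac{q\lambda}{e}(x-qa+\tfrac{q}{2})$), which give the explicit formulas for $G_{\frac{1}{2},0}1_{\overline{0}}$ and $G_{\frac{1}{2},0}1_{\overline{1}}$ asserted in the lemma.

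It remains to pin down $R,S$ and to establish $\delta_{q,-1}b=\delta_{q,-1}d$; I treat case (i), case (ii) being entirely symmetric. The square relation $G_{\frac{1}{2},1}^{2}=qL_{1,2}$ applied to $1_{\overline{0}}$ gives $R\bigl(t-\tfrac{q}{2}\bigr)S(t)=0$ since $L_{1,2}1_{\overline{0}}=0$ by $(\ref{eqi1})$, so $R=0$ or $S=0$ in $\C[t]$. The mixed bracket $G_{\frac{1}{2},0}G_{\frac{1}{2},1}+G_{\frac{1}{2},1}G_{\frac{1}{2},0}=2qL_{1,1}$ applied to $1_{\overline{0}}$ becomes
\[
R\bigl(t-\tfrac{q}{2}\bigr)\tfrac{q\lambda}{e}(t-qa)+eS(t)=2q\lambda\delta_{q,-1}b,
\]
whose degrees rule out the subcase $R\neq 0$, $S=0$; hence $R=0$ and $S(t)=\tfrac{2q\lambda}{e}\delta_{q,-1}b$ is a constant. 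Applying the same mixed bracket to $1_{\overline{1}}$ and using $R=0$ yields $eS\bigl(x-\tfrac{q}{2}\bigr)=2q\mu\delta_{q,-1}d$; comparing with the previous value of $S$ and using $\lambda=\mu$ gives $\delta_{q,-1}b=\delta_{q,-1}d$. The main obstacle throughout will be keeping the $\tfrac{q}{2}$-shifts and the $\delta_{q,-1}$-terms straight, especially in the $q=-1$ regime where $S$ genuinely need not vanish; once the shift identity is in hand, every remaining step reduces to matching coefficients of univariate polynomials.
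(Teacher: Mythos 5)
Your proposal is correct and follows essentially the same route as the paper: introduce four unknown polynomials for $G_{\frac{1}{2},0}1_{\overline{0}}$, $G_{\frac{1}{2},0}1_{\overline{1}}$, $G_{\frac{1}{2},1}1_{\overline{0}}$, $G_{\frac{1}{2},1}1_{\overline{1}}$, extract the functional equations from $G_{\frac{1}{2},0}^{2}=qL_{1,0}$, $[G_{\frac{1}{2},0},G_{\frac{1}{2},1}]=2qL_{1,1}$ and $[G_{\frac{1}{2},1},G_{\frac{1}{2},1}]=2qL_{1,2}=0$ via the $\frac{q}{2}$-shift identity, and solve by comparing degrees and coefficients. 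Your explicit degree count and the elimination of the subcase $R\neq0$, $S=0$ are fine (the paper leaves these details implicit), so no gaps remain.
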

\begin{proof}\vspace*{-5pt}Assume $$G_{\frac{1}{2},0}1_{\overline{0}}=f(x)1_{\overline{1}},\ \ \ \ G_{\frac{1}{2},1}1_{\overline{0}}=g(x)1_{\overline{1}}, \ \ \ \ G_{\frac{1}{2},0}1_{\overline{1}}=h(t)1_{\overline{0}},\ \ \ \ G_{\frac{1}{2},1}1_{\overline{1}}=\widetilde{h}(t)1_{\overline{0}},$$ where $f(x),\,g(x)\in \C[x]$ and $h(t), \,\widetilde{h}(t)\in \C[t]$. Then, we are able to have the \vspace*{-5pt}following
\begin{eqnarray*}\nonumber
\!\!\!\!\!\!\!\!\!\!\!\!\!\!\!\!\!\!\!\!\!\!\!\!\!&\!\!\!\!\!\!\!\!\!\!\!\!\!\!\!&
\ \ \ \ \ \ \ \ \ \ \ \ \ \ \ \ G^{2}_{\frac{1}{2},0}1_{\overline{0}}=G_{\frac{1}{2},0}f(x)1_{\overline{1}}=f\Big(t-\frac{q}{2}\Big)h(t)1_{\overline{0}},\\
\nonumber
\!\!\!\!\!\!\!\!\!\!\!\!\!\!\!\!\!\!\!\!\!\!\!\!\!&\!\!\!\!\!\!\!\!\!\!\!\!\!\!\!&
\ \ \ \ \ \ \ \ \ \ \ \ \ \ \ \ G^{2}_{\frac{1}{2},0}1_{\overline{1}}=G_{\frac{1}{2},0}h(t)1_{\overline{0}}=h\Big(x-\frac{q}{2}\Big)f(x)1_{\overline{1}}.
\end{eqnarray*}
Hence, according to $[G_{\frac{1}{2},0},G_{\frac{1}{2},0}]=2qL_{1,0}$, together with $(\ref{eqi1})$ and $(\ref{eqi2})$,
we get $f\big(t-\frac{q}{2}\big)h(t)=q\lambda(t-qa)$ and $h\big(x-\frac{q}{2}\big)f(x)=q\mu(x-qc)$, which imply that $\lambda=\mu$ and there exists $e\in \C^{*}$ such \vspace*{-5pt}that
\begin{eqnarray}&\!\!\!\!\!\!\!\!\!\!\!\!\!\!\!\!\!\!\!\!\!\!\!\!&\label{eqi3}
% \nonumber to remove numbering (before each equation)
 \ \ \ \ \ \ c=a+\frac{1}{2},\ \ f(t)=e\ \ \mbox{and}\ \ h(t)=\frac{q}{e}\lambda\big(t-qa\big)\ \ \ \mbox{or} \\
  &\!\!\!\!\!\!\!\!\!\!\!\!\!\!\!\!\!\!\!\!\!\!\!\!&\label{eqi4}
 \ \ \ \ \ \ c=a-\frac{1}{2},\ \ h(t)=e\ \ \mbox{and}\ \ f(t)= \frac{q}{e}\lambda\Big(t-qa+\frac{q}{2}\Big).
\end{eqnarray}
%Since $[G_{\frac{1}{2},1},G_{\frac{1}{2},1}]1_{\overline{0}}=2qL_{1,2}1_{\overline{0}}=0$, we obtain
%begin{equation}\label{eqj1}
 % g\Big(t-\frac{q}{2}\Big)\widetilde{h}(t)=0.
%\end{equation}
Furthermore, from $[G_{\frac{1}{2},0},G_{\frac{1}{2},1}]1_{\overline{0}}=2qL_{1,1}1_{\overline{0}}=2q\lambda\delta_{q,-1}b1_{\overline{0}}$, we have
\begin{eqnarray*}\!\!\!\!\!\!\!\!\!\!\!\!&\!\!\!\!\!\!\!\!\!\!\!\!\!\!\!&
\ \ \ \ \ \ \ \ \ \ \ G_{\frac{1}{2},0}G_{\frac{1}{2},1}1_{\overline{0}}+G_{\frac{1}{2},1}G_{\frac{1}{2},0}1_{\overline{0}}=G_{\frac{1}{2},0}g(x)1_{\overline{1}}+G_{\frac{1}{2},1}f(x)1_{\overline{1}}
%\nonumber\\\!\!\!\!\!\!\!\!\!\!\!\!&\!\!\!\!\!\!\!\!\!\!\!\!\!\!\!&
%\ \ \ \ \ \ \ \ \ \ \ \ \ \ \ \ \ \ \ \ \ \ \ \ \ \ \ \ \ \ \ \ \ \ \ \
%=G_{\frac{1}{2},0}g(L_{0,0})1_{\overline{1}}+G_{\frac{1}{2},1}f(L_{0,0})1_{\overline{1}}
\nonumber\\\!\!\!\!\!\!\!\!\!\!\!\!&\!\!\!\!\!\!\!\!\!\!\!\!\!\!\!&
\ \ \ \ \ \ \ \ \ \ \ \ \ \ \ \ \ \ \ \ \ \ \ \ \ \ \ \ \ \ \ \ \ \ \ \ \ \ \ \ \ \ \ \
=g\Big(L_{0,0}-\frac{q}{2}\Big)G_{\frac{1}{2},0}1_{\overline{1}}+f\Big(L_{0,0}-\frac{q}{2}\Big)G_{\frac{1}{2},1}1_{\overline{1}}
\nonumber\\\!\!\!\!\!\!\!\!\!\!\!\!&\!\!\!\!\!\!\!\!\!\!\!\!\!\!\!&
\ \ \ \ \ \ \ \ \ \ \ \ \ \ \ \ \ \ \ \ \ \ \ \ \ \ \ \ \ \ \ \ \ \ \ \ \ \ \ \ \ \ \ \
=g\Big(t-\frac{q}{2}\Big)h(t)1_{\overline{0}}+f\Big(t-\frac{q}{2}\Big)\widetilde{h}(t)1_{\overline{0}},
\end{eqnarray*}
which \vspace*{-5pt}implies %$g\big(t-\frac{q}{2}\big)h(t)+f\big(t-\frac{q}{2}\big)\widetilde{h}(t)=2q\lambda\delta_{q,-1}b$.
\begin{equation}\label{j16}
  g\big(t-\frac{q}{2}\big)h(t)+f\big(t-\frac{q}{2}\big)\widetilde{h}(t)=2q\lambda\delta_{q,-1}b.
\end{equation}
Similarly, by $[G_{\frac{1}{2},0},G_{\frac{1}{2},1}]1_{\overline{1}}=2qL_{1,1}1_{\overline{1}}=2q\mu\delta_{q,-1}d1_{\overline{1}}$, we get
\begin{equation}\label{j17}
  \widetilde{h}\Big(x-\frac{q}{2}\Big)f(x)+h\Big(x-\frac{q}{2}\Big)g(x)=2q\mu\delta_{q,-1}d.
\end{equation}
Note that $[G_{\frac{1}{2},1},G_{\frac{1}{2},1}]1_{\overline{0}}=2qL_{1,2}1_{\overline{0}}=0$, we can then obtain the following
\begin{equation}\label{eqj1}
  g\Big(t-\frac{q}{2}\Big)\widetilde{h}(t)=0.
\end{equation}
Taking $ f(t)=e$ and $h(t)=\frac{q}{e}\lambda\big(t-qa\big)$ in $(\ref{j16})$ and $(\ref{j17})$, and combining with $(\ref{eqj1})$, we obtain
\begin{equation}\label{j18}
  g(x)=0,\ \ \ \widetilde{h}(x)=\frac{2q}{e}\lambda\delta_{q,-1}b\ \ \ \mbox{and}\ \ \ \delta_{q,-1}b=\delta_{q,-1}d.
  \end{equation}
%$\widetilde{h}(x)=\frac{2q}{e}\lambda\delta_{q,-1}b$ and $\delta_{q,-1}b=\delta_{q,-1}d$.
Taking $ h(t)=e$ and $f(t)=\frac{q}{e}\lambda\big(t-qa+\frac{q}{2}\big)$ in $(\ref{j16})$ and $(\ref{j17})$, and applying $(\ref{eqj1})$, we \vspace*{-5pt}have  %we get $\widetilde{h}(x)=0$, $g(x)=\frac{2q}{e}\lambda\delta_{q,-1}b$ and $\delta_{q,-1}b=\delta_{q,-1}d$. This proves Lemma \ref{lemma-3.4ab}.
\begin{equation}\label{j19}
    \widetilde{h}(x)=0,\ \ \ g(x)=\frac{2q}{e}\lambda\delta_{q,-1}b\ \ \ \mbox{and}\ \ \ \delta_{q,-1}b=\delta_{q,-1}d.
\end{equation}
Therefore, Lemma \ref{lemma-3.4ab} follows from $(\ref{eqi3})$, $(\ref{eqi4})$, $(\ref{j18})$ and $(\ref{j19})$.
%Then, with $(\ref{eqi3})$--$(\ref{eqj1})$, it follows that $g(x)=0$, $\widetilde{h}(x)=\frac{2q}{e}\lambda\delta_{q,-1}b$ and $\delta_{q,-1}b=\delta_{q,-1}d$ or $\widetilde{h}(x)=0$, $g(x)=\frac{2q}{e}\lambda\delta_{q,-1}b$ and $\delta_{q,-1}b=\delta_{q,-1}d$. This proves Lemma
%\ref{lemma-3.4ab}.
\end{proof}

According to Lemma $\ref{lemma-3.4ab}$, up to parity,  without loss of generality, we can assume the following\vspace*{-5pt}, $$c=a+\frac{1}{2}, \ \ G_{\frac{1}{2},0}1_{\overline{0}}=1_{\overline{1}},\ \ G_{\frac{1}{2},1}1_{\overline{0}}=0,\ \ G_{\frac{1}{2},0}1_{\overline{1}}=q\lambda(t-qa)1_{\overline{0}}\mbox{ \ \ and  \ \ } G_{\frac{1}{2},1}1_{\overline{1}}=2q\lambda\delta_{q,-1}b1_{\overline{0}}.$$
\begin{lemm}\label{lemma-3.4abc} For any $m\in \frac{1}{2}+\Z$ and $i\in \Z_{+}$, one \vspace*{-5pt}has
\begin{eqnarray}&\!\!\!\!\!\!\!\!\!\!\!\!\!\!\!\!\!\!\!\!\!\!\!\!&\label{ii1}
G_{m,i}1_{\overline{0}}=\lambda^{m-\frac{1}{2}}\delta_{i,0}1_{\overline{1}},\\
&\!\!\!\!\!\!\!\!\!\!\!\!\!\!\!\!\!\!\!\!\!\!\!\!&\label{ii2}
G_{m,i}1_{\overline{1}}=q\lambda^{m+\frac{1}{2}}\Big(\delta_{i,0}\big(t-2mqa\big)+2\delta_{q,-1}\delta_{i,1}b\Big)1_{\overline{0}}.
\end{eqnarray}
\end{lemm}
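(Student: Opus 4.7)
The plan is to reduce the formulas \eqref{ii1} and \eqref{ii2} to the already-established values of $G_{1/2,0}$ and $G_{1/2,1}$ (from Lemma~\ref{lemma-3.4ab}) by using the bracket relation \eqref{J_a_b c_}. The idea is that for generic indices, the scalar $l(i+q)-m(j+\tfrac{q}{2})$ multiplying $G_{m+l,i+j}$ in $[L_{m,i},G_{l,j}]$ is nonzero, so $G_{m+l,i+j}$ can be written as a scalar multiple of a Lie bracket of elements whose actions on $1_{\overline{0}}$ and $1_{\overline{1}}$ have already been computed.

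First I would dispose of $i=0$. Writing $m=n+\tfrac{1}{2}$ with $n\in\Z$, the relation $[L_{n,0},G_{1/2,0}]=\tfrac{q(1-n)}{2}G_{n+1/2,0}$ inverts whenever $n\neq 1$. Applying this identity to $1_{\overline{0}}$ and $1_{\overline{1}}$, and using \eqref{eqi1}, \eqref{eqi2}, together with the base values $G_{1/2,0}1_{\overline{0}}=1_{\overline{1}}$ and $G_{1/2,0}1_{\overline{1}}=q\lambda(t-qa)1_{\overline{0}}$, yields exactly the asserted formulas for $G_{n+1/2,0}$ when $n\neq 1$. The remaining half-integer $m=3/2$ (i.e., $n=1$) is covered by first establishing $G_{-1/2,0}$ via the unproblematic case $n=-1$ and then inverting $[L_{2,0},G_{-1/2,0}]=-\tfrac{3q}{2}G_{3/2,0}$.

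Next I would treat general $i\in\N$. With $G_{m,0}$ now known for every $m\in\tfrac{1}{2}+\Z$, the relation $[L_{0,i},G_{m,0}]=m(i+q)G_{m,i}$ inverts whenever $i+q\neq 0$, since $m\neq 0$ automatically. Evaluating on $1_{\overline{0}}$ and $1_{\overline{1}}$ using Step~1 together with \eqref{eqi1}, \eqref{eqi2}, the right-hand sides collapse to precisely the expressions in \eqref{ii1} and \eqref{ii2}. For the exceptional values $i=-q$ (which can occur only when $q$ is a negative integer, most notably when $q=-1$, $i=1$, the case in which the $b$-term in \eqref{ii2} appears), I would substitute a different bracket, for instance $[L_{n,i},G_{m-n,0}]$ with $n$ chosen so that the coefficient $(m-n)(i+q)-n(\tfrac{q}{2})$ is nonzero, again reducing everything to Step~1 and \eqref{eqi1}--\eqref{eqi2}.

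The main obstacle will be the bookkeeping of the $\delta_{q,-1}$ contributions: verifying that the various inversion formulas, when specialized to $q=-1$ and $i=1$, consistently produce the coefficient $2\delta_{q,-1}b$ on the right side of \eqref{ii2} while evaluating to zero for all other $i\geq 1$, and that the case analysis around the exceptional indices $n=1$ in Step~1 and $i=-q$ in Step~2 glues together without inconsistency. Everything else is a routine but lengthy expansion of commutators acting on the homogeneous generators.
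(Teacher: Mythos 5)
Your proposal is correct and follows essentially the same strategy as the paper: bootstrap from the values of $G_{\frac{1}{2},0}$ and $G_{\frac{1}{2},1}$ on $1_{\overline{0}},1_{\overline{1}}$ given by Lemma \ref{lemma-3.4ab} by inverting bracket relations whose scalar coefficients are nonzero, with a second, independent bracket identity to cover the exceptional indices (your $n=1$ and $i=-q$ correspond to the paper's $m=\frac{3}{2}$ case and its pair of equations \eqref{eqi123}--\eqref{eqi12345} and \eqref{eqi678}--\eqref{eqi789}). The only notable difference is that for the $i=0$ part of \eqref{ii2} the paper uses the anticommutator $[G_{m,0},G_{m,0}]=2qL_{2m,0}$ rather than a commutator with $L_{n,0}$, but this is a cosmetic variation.
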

\begin{proof} At first, we prove that ($\ref{ii1}$) is true for any $m\in \frac{1}{2}+\Z$ and $i\in \Z_{+}$. By $(\ref{eqi1})$, $(\ref{eqi2})$ and Lemma $\ref{lemma-3.4ab}$, we get
\begin{eqnarray*}\!\!\!\!\!\!\!\!\!\!\!\!&\!\!\!\!\!\!\!\!\!\!\!\!\!\!\!&
\ \ \ \ \ \ \ \ \ \ \ \ \ \ \frac{1}{2}\Big(i-\Big(m-\frac{3}{2}\Big)q\Big)G_{m,i}1_{\overline{0}}=[L_{m-\frac{1}{2},i},G_{\frac{1}{2},0}]1_{\overline{0}}
=L_{m-\frac{1}{2},i}G_{\frac{1}{2},0}1_{\overline{0}}-G_{\frac{1}{2},0}L_{m-\frac{1}{2},i}1_{\overline{0}}
%\nonumber\\\!\!\!\!\!\!\!\!\!\!\!\!&\!\!\!\!\!\!\!\!\!\!\!\!\!\!\!&
%\ \ \ \ \ \ \ \ \ \ \ \ \ \ \ \ \ \ \ \ \ \ \ \ \ \ \ \ \ \ \ \ \ \ \ \
%=L_{m-\frac{1}{2},i}G_{\frac{1}{2},0}1_{\overline{0}}-G_{\frac{1}{2},0}L_{m-\frac{1}{2},i}1_{\overline{0}}
\nonumber\\\!\!\!\!\!\!\!\!\!\!\!\!&\!\!\!\!\!\!\!\!\!\!\!\!\!\!\!&
\ \ \ \ \ \ \ \ \ \ \ \ \ \ \ \ \ \ \ \ \ \ \ \ \ \ \ \ \ \ \ \ \ \ \ \ \ \ \ \ \ \ \ \ \ \ \
=L_{m-\frac{1}{2},i}1_{\overline{1}}-G_{\frac{1}{2},0}\lambda^{m-\frac{1}{2}}\Big(\delta_{i,0}\Big(t-mqa+\frac{qa}{2}\Big)
+\delta_{q,-1}\delta_{i,1}b\Big)1_{\overline{0}}
\nonumber\\\!\!\!\!\!\!\!\!\!\!\!\!&\!\!\!\!\!\!\!\!\!\!\!\!\!\!\!&
\ \ \ \ \ \ \ \ \ \ \ \ \ \ \ \ \ \ \ \ \ \ \ \ \ \ \ \ \ \ \ \ \ \ \ \ \ \ \ \ \ \ \ \ \ \ \
=\lambda^{m-\frac{1}{2}}\Big(\delta_{i,0}\Big(x-\Big(a+\frac{1}{2}\Big)\Big(m-\frac{1}{2}\Big)q\Big)+\delta_{q,-1}\delta_{i,1}b\Big)1_{\overline{1}}
\nonumber\\\!\!\!\!\!\!\!\!\!\!\!\!&\!\!\!\!\!\!\!\!\!\!\!\!\!\!\!&
\ \ \ \ \ \ \ \ \ \ \ \ \ \ \ \ \ \ \ \ \ \ \ \ \ \ \ \ \ \ \ \ \ \ \ \ \ \ \ \ \ \ \ \ \ \ \ \ \ \ \,
-\lambda^{m-\frac{1}{2}}\Big(\delta_{i,0}\Big(x-mqa+\frac{qa}{2}-\frac{q}{2}\Big)+\delta_{q,-1}\delta_{i,1}b\Big)1_{\overline{1}}
%\nonumber\\\!\!\!\!\!\!\!\!\!\!\!\!&\!\!\!\!\!\!\!\!\!\!\!\!\!\!\!&
%\ \ \ \ \ \ \ \ \ \ \ \ \ \ \ \ \ \ \ \ \ \ \ \ \ \ \ \ \ \ \ \ \ \ \
%=\lambda^{m-\frac{1}{2}}\delta_{i,0}\Big(\frac{q}{2}-\frac{1}{2}mq+\frac{q}{4}\Big)1_{\overline{1}}
\nonumber\\\!\!\!\!\!\!\!\!\!\!\!\!&\!\!\!\!\!\!\!\!\!\!\!\!\!\!\!&
\ \ \ \ \ \ \ \ \ \ \ \ \ \ \ \ \ \ \ \ \ \ \ \ \ \ \ \ \ \ \ \ \ \ \ \ \ \ \ \ \ \ \ \ \ \ \
=\frac{1}{2}\Big(\frac{3}{2}-m\Big)q\lambda^{m-\frac{1}{2}}\delta_{i,0}1_{\overline{1}}
,\end{eqnarray*}
\vspace*{-5pt}and
\begin{eqnarray*}\!\!\!\!\!\!\!\!\!\!\!\!&\!\!\!\!\!\!\!\!\!\!\!\!\!\!\!&
  \ \ \ \ \ \ \frac{1}{2}\Big(i-2m-\Big(m-\frac{3}{2}\Big)q\Big)G_{m,i}1_{\overline{0}}=[L_{m-\frac{1}{2},i-1},G_{\frac{1}{2},1}]1_{\overline{0}}
\nonumber\\\!\!\!\!\!\!\!\!\!\!\!\!&\!\!\!\!\!\!\!\!\!\!\!\!\!\!\!&
\ \ \ \ \ \ \ \ \ \ \ \ \ \ \ \ \ \ \ \ \ \ \ \ \ \ \ \ \ \ \ \ \ \ \ \ \ \ \ \ \ \ \ \ \ \ \ \,
=L_{m-\frac{1}{2},i-1}G_{\frac{1}{2},1}1_{\overline{0}}-G_{\frac{1}{2},1}L_{m-\frac{1}{2},i-1}1_{\overline{0}}
%\nonumber\\\!\!\!\!\!\!\!\!\!\!\!\!&\!\!\!\!\!\!\!\!\!\!\!\!\!\!\!&
%\ \ \ \ \ \ \ \ \ \ \ \ \ \ \ \ \ \ \ \ \ \ \ \ \ \ \ \ \ \ \ \ \ \ \ \ \ \ \ \ \ \ \ \ \ \ \ \,
=0,
\end{eqnarray*}
which imply that
\begin{eqnarray}&\!\!\!\!\!\!\!\!\!\!\!\!\!\!\!\!\!\!\!\!\!\!\!\!&\label{eqi123}
\Big(i-\Big(m-\frac{3}{2}\Big)q\Big)G_{m,i}1_{\overline{0}}=\Big(\frac{3}{2}-m\Big)q\lambda^{m-\frac{1}{2}}\delta_{i,0}1_{\overline{1}},\\
&\!\!\!\!\!\!\!\!\!\!\!\!\!\!\!\!\!\!\!\!\!\!\!\!&\label{eqi12345}
\Big(i-2m-\Big(m-\frac{3}{2}\Big)q\Big)G_{m,i}1_{\overline{0}}=0.
\end{eqnarray}
%\begin{equation}\label{eqi123}
  %\Big(i-\big(m-\frac{3}{2}\big)q\Big)G_{m,i}1_{\overline{0}}=\Big(\frac{3}{2}-m\Big)q\lambda^{m-\frac{1}{2}}\delta_{i,0}1_{\overline{1}}.
%\end{equation}
For any $m\in \frac{1}{2}+\Z$ and $m\neq\frac{3}{2}$, by $(\ref{eqi123})$ and $(\ref{eqi12345})$, it follows that
\begin{equation}\label{eqi1234}
  G_{m,i}1_{\overline{0}}=\lambda^{m-\frac{1}{2}}\delta_{i,0}1_{\overline{1}}.
\end{equation}
Take $m=\frac{3}{2}$ in $(\ref{eqi123})$, we get $G_{\frac{3}{2},i}1_{\overline{0}}=0$ for $i\in \N$.
Furthermore, from $(\ref{eqi1})$, $(\ref{eqi2})$ and $(\ref{eqi1234})$, we can obtain the \vspace*{-5pt}following
\begin{eqnarray*}\!\!\!\!\!\!\!\!\!\!\!\!&\!\!\!\!\!\!\!\!\!\!\!\!\!\!\!&
\ \ \ \ \ \ \ \ \ \ \ \ \ \ -\frac{3q}{2}G_{\frac{3}{2},0}1_{\overline{0}}=[L_{2,0},G_{-\frac{1}{2},0}]1_{\overline{0}}
\nonumber\\\!\!\!\!\!\!\!\!\!\!\!\!&\!\!\!\!\!\!\!\!\!\!\!\!\!\!\!&
\ \ \ \ \ \ \ \ \ \ \ \ \ \ \ \ \ \ \ \ \ \ \ \ \ \ \ \ \
=L_{2,0}G_{-\frac{1}{2},0}1_{\overline{0}}-G_{-\frac{1}{2},0}L_{2,0}1_{\overline{0}}
\nonumber\\\!\!\!\!\!\!\!\!\!\!\!\!&\!\!\!\!\!\!\!\!\!\!\!\!\!\!\!&
\ \ \ \ \ \ \ \ \ \ \ \ \ \ \ \ \ \ \ \ \ \ \ \ \ \ \ \ \
=\lambda^{-1}L_{2,0}1_{\overline{1}}-\lambda^{2}G_{-\frac{1}{2},0}\big(t-2qa\big)1_{\overline{0}}
\nonumber\\\!\!\!\!\!\!\!\!\!\!\!\!&\!\!\!\!\!\!\!\!\!\!\!\!\!\!\!&
\ \ \ \ \ \ \ \ \ \ \ \ \ \ \ \ \ \ \ \ \ \ \ \ \ \ \ \ \
=\lambda\Big(x-\big(2a+1\big)q\Big)1_{\overline{1}}-\lambda\Big(x-2qa+\frac{q}{2}\Big)1_{\overline{1}}
%\nonumber\\\!\!\!\!\!\!\!\!\!\!\!\!&\!\!\!\!\!\!\!\!\!\!\!\!\!\!\!&
%\ \ \ \ \ \ \ \ \ \ \ \ \ \ \ \ \ \ \ \ \ \ \ \ \ \ \ \ \
=-\frac{3q}{2}\lambda1_{\overline{1}},
\end{eqnarray*}
%\textcolor[rgb]{1.00,0.00,0.00}{where the first result in the third equality follows from $(\ref{eqi1234})$.}
Hence, $G_{\frac{3}{2},0}1_{\overline{0}}=\lambda1_{\overline{1}}$, and $(\ref{ii1})$ holds.

Now we want to show that $(\ref{ii2})$ holds. Firstly we consider the case $i=0$. Suppose  $G_{m,0}1_{\overline{1}}=\widetilde{f}(t)1_{\overline{0}}$, where $\widetilde{f}(t)\in \C[t]$. Then, applying $(\ref{eqi2})$ and $(\ref{ii1})$, we \vspace*{-5pt}obtain
\begin{equation*}
  G_{m,0}^{2}1_{\overline{1}}=qL_{2m,0}1_{\overline{1}}=q\lambda^{2m}\big(x-2mqa-mq\big)1_{\overline{1}}
,\end{equation*}
\vspace*{-5pt}and
\begin{equation*}
  G_{m,0}G_{m,0}1_{\overline{1}}=G_{m,0}\widetilde{f}(t)1_{\overline{0}}=\widetilde{f}(x-mq)G_{m,0}1_{\overline{0}}
=\lambda^{m-\frac{1}{2}}\widetilde{f}(x-mq)1_{\overline{1}},
\end{equation*}
%\begin{equation*}
 %\ \ \ \ \ \ \ G_{m,0}G_{m,0}1_{\overline{1}}=G_{m,0}\widetilde{f}(t)1_{\overline{0}}=\widetilde{f}(x-mq)G_{m,0}1_{\overline{0}}
%=\lambda^{m-\frac{1}{2}}\widetilde{f}(x-mq)1_{\overline{1}}.
%\end{equation*}
which yield that $\lambda^{m-\frac{1}{2}}\widetilde{f}(x-mq)=q\lambda^{2m}\big(x-2mqa-mq\big)$. Consequently, we \vspace*{-5pt}have
\begin{equation}\label{eqi567}
  G_{m,0}1_{\overline{1}}=q\lambda^{m+\frac{1}{2}}\big(t-2mqa\big)1_{\overline{0}}.
\end{equation}
Next we consider the case $i\in \N$. %Since $\frac{1}{2}\Big(i-2m-\big(m-\frac{3}{2}\big)q\Big)G_{m,i}1_{\overline{1}}=[L_{m-\frac{1}{2},i-1},G_{\frac{1}{2},1}]1_{\overline{1}}$ and $\frac{1}{2}\Big(i-\big(m-\frac{3}{2}\big)q\Big)G_{m,i}1_{\overline{1}}=[L_{m-\frac{1}{2},i},G_{\frac{1}{2},0}]1_{\overline{1}}$.
By Lemma $\ref{lemma-3.4ab}$ and a similar proof of $(\ref{eqi123})$ and $(\ref{eqi12345})$, we \vspace*{-5pt}obtain
%\begin{equation}
 %\frac{1}{2} \Big(i-\Big(m-\frac{3}{2}\Big)q\Big)G_{m,i}1_{\overline{1}}=\Big(\frac{1}{2}-m\Big)q^{2}\lambda^{m+\frac{1}{2}}\delta_{q,-1}\delta_{i,1}b1_{\overline{0}}.
%\end{equation}
%\begin{equation}
 % \frac{1}{2} \Big(i-2m-\Big(m-\frac{3}{2}\Big)q\Big)G_{m,i}1_{\overline{1}}=\Big(\frac{1}{2}+m\Big)q^{2}\lambda^{m+\frac{1}{2}}\delta_{q,-1}\delta_{i,1}b1_{\overline{0}}.
%\end{equation}
\begin{eqnarray}&\!\!\!\!\!\!\!\!\!\!\!\!\!\!\!\!\!\!\!\!\!\!\!\!&\label{eqi678}
%\ \ \ \ \ \ \ \,
\frac{1}{2} \Big(i-\Big(m-\frac{3}{2}\Big)q\Big)G_{m,i}1_{\overline{1}}=\Big(\frac{1}{2}-m\Big)\lambda^{m+\frac{1}{2}}\delta_{q,-1}\delta_{i,1}b1_{\overline{0}},\\[4pt]
&\!\!\!\!\!\!\!\!\!\!\!\!\!\!\!\!\!\!\!\!\!\!\!\!&\label{eqi789}
\frac{1}{2}\Big(i-2m-\Big(m-\frac{3}{2}\Big)q\Big)G_{m,i}1_{\overline{1}}=\Big(\frac{1}{2}+m\Big)\lambda^{m+\frac{1}{2}}\delta_{q,-1}\delta_{i,1}b1_{\overline{0}}.
\end{eqnarray}
Let $m\neq\frac{1}{2}$ in $(\ref{eqi678})$ and $(\ref{eqi789})$, it follows that $G_{m,i}1_{\overline{1}}=2q\lambda^{m+\frac{1}{2}}\delta_{q,-1}\delta_{i,1}b1_{\overline{0}}$ for $i\in \N$. Further, let $m=\frac{1}{2}$ in $(\ref{eqi678})$ and $(\ref{eqi789})$, it follows that $G_{\frac{1}{2},i}1_{\overline{1}}=2q\lambda\delta_{q,-1}\delta_{i,1}b1_{\overline{0}}$ for $i\in \N$. Hence, combining with $(\ref{eqi567})$, we get $(\ref{ii2})$. This completes the proof of Lemma \ref{lemma-3.4abc}.
\end{proof}
%\noindent{\it Proof of Theorem $\ref{theo1.6}$} (2), (3) and (4). Let $\lambda,\lambda'\in \C^{*}$, $a,a',b,b'\in \C$, $r\in \frac{1}{2}+\Z$ and $i\in \Z_{+}$. Theorem $\ref{theo1.6}$ (2) follows from Propositions $\ref{prop-3.2a}$, $\ref{prop-3.3a}$ and Theorem $\ref{theo1.5}$(2).

%By Proposition $\ref{prop-3.3a}$, we have $\Omega(\lambda,a,b)_{\H}\cong \Omega(\sqrt{\lambda},a,2b)_{\R}$ and $\Omega(\lambda',a',b')_{\H}\cong \Omega(\sqrt{\lambda'},a',2b')_{\R}$. Thus, from Theorem $\ref{theo1.5}$(3), we get $\Omega(\lambda,a,b)_{\H}\cong \Omega(\lambda',a',b')_{\H}$ if and only if $\lambda=\lambda'$, $a=a'$ and $b=b'$.
Finally, we can give the proof of Theorem $\ref{theo1.6}$ (4), which gives a complete classification of free $U(\mathfrak{h})$-modules of rank 2 over the Neveu-Schwarz-Block algebra $\L$.

Since $G_{r,i}f(t)1_{\overline{0}}=f(x-rq)G_{r,i}1_{\overline{0}}$ and $G_{r,i}g(x)1_{\overline{1}}=g(t-rq)G_{r,i}1_{\overline{1}}$, Theorem $\ref{theo1.6}$ (4) is implied by $(\ref{eqi1})$, $(\ref{eqi2})$ and Lemma $\ref{lemma-3.4abc}$\vspace*{-5pt}. %\textcolor[rgb]{1.00,0.00,0.00}{It gives a complete classification of free $U(\mathfrak{h})$-module of rank 2 over the Neveu-Schwarz-Block algebra $\L$}.%We complete the proof.

\end{CJK*}

\begin{thebibliography}{9999}\vskip0pt\small
\parindent=2ex\parskip=-1.5pt\baselineskip=-1.5pt\lineskip=3.0pt
\def\re#1{\bibitem{#1}\label{#1}}

\re{B1} I. Bakas, The structure of the $\W_{\infty}$ algebra, {\em Comm. Math. Phys.} {\bf134} (1990), 487--508.
\re{B} R. E. Block, On torsion-free abelian groups and Lie algebras, {\em Proc. Amer. Math. Soc.} {\bf9} (1958), 613--620.

\re{CG} H. Chen, X. Guo, Non-weight modules over the Heisenberg-Virasoro algebra and the $W$ algebra $W(2, 2)$, {\em J. Algebra Appl.} {\bf16} (2017), 1750097.
\re{CG1} H. Chen, X. Guo, A new family of modules over the Virasoro algebra, {\em J. Algebra} {\bf457} (2016), 73--105.
\re{CY} Q. Chen, Y. Yao, Non-weight modules over algebras related to the Virasoro algebra, {\em J. Geom. Phys.} {\bf134} (2018), 11--18.
\re{DZ} D. Z. Dokovic, K. Zhao, Derivations, isomorphisms and second cohomology of generalized Block algebras,  {\em Algebra Colloq.} {\bf3} (1996), 245--272.

\re{HCS} J. Han, Q. Chen, Y. Su, Modules over the algebra $\mathcal{V}ir(a, b)$, {\em Linear Algebra Appl.} {\bf 515} (2017), 11--23.
\re{KPS} E. Kirkman, C. Procesi, L. Small, A $q$-analog for the Virasoro algebra, {\em Comm. Algebra} {\bf22} (1994), 3755--3774.

\re{18}	C. Li, J. He, Y. Su, Block (or Hamiltonian) Lie symmetry of dispersionless D-type Drinfeld-Sokolov hierarchy, {\em Commun. Theor. Phys.} {\bf 61} (2014), 431--435.
\re{27}	C. Li, J. He, Y. Su, Block type symmetry of bigraded Toda hierarchy, {\em J. Math. Phys.} {\bf53} (2012), 013517.

\re{LG} X. Liu, X. Guo, $U(\mathfrak{h})$-free modules over the Block algebra $\mathcal{B}(q)$, arXiv:1801.03232v1.
\re{LZ} R. Lu, K. Zhao, Irreducible Virasoro modules from irreducible Weyl modules, {\em J. Algebra} {\bf414} (2014), 271--287.
%\re{OM} O. Mathieu, Classification of Harish-Chandra modules over the Virasoro Lie algebra, {\em Invent. Math.} {\bf107} (1992), 225--234.
\re{N} J. Nilsson, Simple $sl_{n+1}$-module structures on $U(\mathfrak{h})$, {\em J. Algebra} {\bf424} (2015), 294--329.
\re{N1} J. Nilsson, $U(\mathfrak{h})$-free modules and coherent families, {\em J. Pure Appl. Algebra} {\bf220(4)} (2016), 1475--1488.
%\re{KPS} E. Kirkman, C. Procesi, L. Small, A $q$-analog for the Virasoro algebra, {\em Comm. Algebra} {\bf22} (1994), 3755-3774.
\re{OZ} J. M. Osborn, K. Zhao, Infinite-dimensional Lie algebras of generalized Block type, {\em Proc. Amer. Math. Soc.} {\bf127} (1999), 1641--1650.
\re{SZ} Y. Su, K. Zhao, Generalized Virasoro and super-Virasoro algebras and modules of the intermediate series, {\em J. Algebra} {\bf252} (2002), 1--19.
%\re{SXX} Y. Su, C. Xia, Y. Xu, Classification of quasifinite representations of a Lie algebra related to Block type,{\em J. Algebra} {\bf393} (2013), 71--78.

\re{TZ} H. Tan, K. Zhao, $\W_{n}^{+}$ and $\W_{n}$-module structures on $U(\mathfrak{h}_{n})$, {\em J. Algebra}  {\bf424} (2015), 357--375.
\re{TZ1} H. Tan, K. Zhao, Irreducible modules over Witt algebras $\W_{n}$ and over $sl_{n+1}(\C)$, {\em Algebr. Represent. Theory} {\bf21(4)} (2018), 787--806.
%\re{W} S. Williamson, On a family of non-weight modules over Witt-type Lie algebras and superalgebras, {\em J. Algebra} {\bf 569} (2021), 180--194.
\re{WZ} Y. Wang, H. Zhang, A class of non-weight modules over the Schr$\ddot{o}$dinger-Virasoro algebras, arXiv:1809.05236v1.
\re{X1} C. Xia, Structure of two classes of Lie superalgebras of Block type, {\em Int. J. Math.} {\bf 27(5)} (2016), 1650038.
%\re{XYZ} C. Xia, T. You, L. Zhou, Structure of a class of Lie algebras of Block type, {\em Comm. Algebra} {\bf40} (2012), 3113--3126.

\re{X} X. Xu, New generalized simple Lie algebras of Cartan type over a field with characteristic 0, {\em J. Algebra} {\bf224} (2000), 23--58.
\re{YYX} H. Yang, Y. Yao, L. Xia, On non-weight representations of the $N = 2$ superconformal algebras, {\em J. Pure Appl. Algebra} {\bf225} (2021), 106529.
\re{YYX1} H. Yang, Y. Yao, L. Xia, A family of non-weight modules over the super-Virasoro algebras, {\em J. Algebra} {\bf547} (2020), 538--555.
\re{ZZ} Q. Zhang, Y. Zhang, Derivation algebras of the modular Lie superalgebras $W$ and $S$ of Cartan-type, {\em Acta Math. Sci.} {\bf 20} (2000), 137--144.
\re{Z} K. Zhao, A class of infinite dimensional simple Lie algebras, {\em J. London Math. Soc.} {\bf62} (2000), 71--84.
\end{thebibliography}
 \end{document}